\newcommand{\mbb}{\mathbb}
\newcommand{\ol}{\overline}
\newcommand{\psp}{\vspace{0.2cm}}
\begin{document}

\baselineskip=18pt
\newcommand{\headingstobeshown}{}
\def\tenrm{\rm}

\renewcommand{\theequation}{\thesection.\arabic{equation}}

%\makeatletter      % '@' is now a normal "letter" for TeX
%\@addtoreset{equation}{section}
%\makeatother       % '@' is restored as a "non-letter" character for TeX

\newenvironment{proof}
               {\begin{sloppypar} \noindent{\it Proof.}}
               {\hspace*{\fill} $\square$ \end{sloppypar}}

\newtheorem{atheorem}{\bf \temp}[section]

\newenvironment{theorem}[1]{\def \temp{#1}
                \begin{atheorem}\medskip
                }
                {\medskip
                 \end{atheorem}}

\newtheorem{thm}[atheorem]{Theorem}
\newtheorem{cor}[atheorem]{Corollary}
\newtheorem{lem}[atheorem]{Lemma}
\newtheorem{pro}[atheorem]{Property}
\newtheorem{prop}[atheorem]{Proposition}
\newtheorem{de}[atheorem]{Definition}
\newtheorem{rem}[atheorem]{Remark}
\newtheorem{fac}[atheorem]{Fact}
\newtheorem{ex}[atheorem]{Example}
\newtheorem{pr}[atheorem]{Problem}
\newtheorem{cla}[atheorem]{Assert}
\newcommand{\ptl}{\partial}
\newcommand{\la}{\langle}
\newcommand{\ra}{\rangle}
\newcommand{\lmd}{\lambda}

\begin{center}{\Large \bf Noncanonical Polynomial Representations}\end{center}
\begin{center}{\Large \bf of Classical Lie Algebras}\end{center}

\vspace{0.2cm}

\begin{center}{\large Cuiling Luo}
\end{center}

\begin{center}{Institute of Mathematics, Academy of Mathematics \&
System Sciences,
%}\end{center}
%\begin{center}{
\\Chinese Academy of Sciences, Beijing 100190, China}\end{center}
\begin{center}{E-mail: luocuiling05@mails.gucas.ac.cn}\end{center}
\vspace{0.4cm}
\begin{abstract}
Using the skew-symmetry of the differential operators and
multiplication operators in the canonical representations of
finite-dimensional classical Lie algebras, we obtain
 some noncanonical polynomial representations of the classical Lie
algebras. The representation spaces  of all polynomials  are
decomposed into irreducible submodules, which are
infinite-dimensional.  Bases for the irreducible submodules are
constructed. In particular, we obtain some new infinite-dimensional
irreducible modules of symplectic Lie algebras that are not of
highest weight type.

\vspace{0.5cm} {\bf Keywords:} representation, irreducible module,
highest weight, singular vector.

\end{abstract}

\vspace{0.5cm}
\section{Introduction}

In Lie algebras, the highest weight representation theory plays a
fundamental role (e.g., cf. [4], [5]), where one of the most
beautiful things is the Weyl character formula for
finite-dimensional irreducible modules of finite-dimensional simple
Lie algebras. However, the irreducible modules are only identified
as the unique irreducible quotient module of the corresponding Verma
modules and their bases are not explicitly given. Moreover, no
information on infinite-dimensional irreducible highest weight
modules is given. Gelfand and Tsetlin [2, 3] constructed a basis
 for finite-dimensional irreducible modules of special linear Lie algebras and
  orthogonal Lie algebras, and gave the representation formulas of
  simple root vectors. Molev [6] generalized their works to symplectic Lie
  algebras.  A deficiency of these works is that the representation formulas
  of general elements in the Lie algebras are too complicated to
  give. There are also other works on basis for finite-dimensional irreducible modules of
finite-dimensional simple Lie algebras, such as monomial basis, with
a certain deficiency.

Canonical polynomial irreducible representations (the known
oscillator representations in physics) (e.g., cf. [1]) of
finite-dimensional simple Lie algebras are very important from
application point of view, where both the representation formulas
and bases are clear. But they are special irreducible
representations. So it is desirable to find more polynomial
irreducible representations in which both the representation
formulas and bases are explicitly given, especially
infinite-dimensional ones.

In this paper, we use the skew-symmetry of the differential
operators and multiplication operators in the canonical
representations of classical Lie algebras, to obtain
 some noncanonical polynomial representations of classical Lie algebras.
 The representation spaces of all polynomials  are
decomposed into irreducible submodules, which are
infinite-dimensional.  Bases for the irreducible submodules are
constructed and Xu's work [8] on flag partial differential equations
is used in some cases. In particular, we obtain some new
infinite-dimensional irreducible modules of symplectic Lie algebras
that are not of highest weight type. Below we give a more detailed
technical introduction.

For convenience, we take the following notation of indices:
\begin{equation}\ol{i,j}=\{i,i+1,...,j\}, \end{equation}where $i\leq j$ are integers.
Let $E_{i,j}$ be the square matrix whose $(i,j)$-entry is 1 and the
others are zero.  The canonical polynomial representation of the
general linear Lie algebra $gl(n,\mbb{C})$ is given by
\begin{equation}
E_{i,j}=x_i\partial_{x_j},\qquad i,j\in\ol{1,n}.
\end{equation}
Indeed the above representation shows that we can use
$(x_i,\ptl_{x_j})$ as the coordinates $(i,j)$ of matrix. The
canonical polynomial representation of $sl(n,\mbb{C})$,
$so(\mbb{C},n)$ and $sp(n,\mbb{C})$ ($n$ is even) are given by the
above formulas as restricted representations of Lie subalgebras of
$gl(\mbb{C},n)$. Denote ${\cal A}=\mbb{C}[x_1,x_2,...,x_n]$ and
denote by ${\cal A}_i$ the space of all polynomials of degree $i$ in
${\cal A}$. It is known that all ${\cal A}_i$ are irreducible
$sl(n,\mbb{C})$-submodules and are irreducible
$sp(n,\mbb{C})$-submodules when $n$ is even. Let ${\cal H}_i$ be
harmonic polynomials of degree $i$, that is,
\begin{equation}
{\cal H}_i=\{ f\in {\cal A}_i\mid (\sum_{i=1}^n\ptl_{x_i}^2)(f)=0\}.
\end{equation}
View $so(n,\mbb{C})$ as the subalgebra of skew-symmetric matrices in
$gl(\mbb{C},n)$. It is well known that
\begin{equation}
{\cal A}_i={\cal H}_i\oplus (\sum_{r=1}^nx_r^2){\cal A}_{i-2},
\end{equation}
where we treat ${\cal A}_{-1}={\cal A}_{-2}=\{0\}$. An explicit
basis for ${\cal H}_i$ is given in [8]. Moreover, Xu [8] obtained
similar result for the simple Lie algebra of type $G_2$.

Denote by $\mbb{Z}$ the ring of integers and by $\mbb{N}$ the set of
nonnegative integers. Xu [7] observed that the positions of $x_i$
and $\ptl_{x_i}$ are skew symmetric as operators on ${\cal A}$, that
is,
\begin{equation}
[\ptl_{x_i},x_j]=\delta_{i,j}=[-x_j,\ptl_{x_i}]
\end{equation}
and gave the following noncanonical polynomial representation of
$gl(n,\mbb{C})$:
\begin{equation}
E_{i,j}=\left\{\begin{array}{ll}-x_j\ptl_{x_i}-\delta_{i,j}&\mbox{if}\:1\leq
i,j\leq m,\\ \ptl_{x_i}\ptl_{x_j}&\mbox{if}\;1\leq i\leq m,\;m<j\leq n,\\
-x_ix_j &\mbox{if}\;m<i\leq n,\;1\leq j\leq m,\\
x_i\partial_{x_j}&\mbox{if}\;m<i\leq n,\;m<j\leq
n,\end{array}\right.
\end{equation}
where $m<n$ is a given positive integer. Define
\begin{equation}
{\cal A}_{\la r\ra}=\mbox{Span}\:\{x_1^{i_1}x_2^{i_2}\cdots
x_n^{i_n}\mid
i_1,...,i_n\in\mbb{N};\sum_{s=1}^mi_s-\sum_{t=m+1}^ni_t=r\},\qquad
r\in\mbb{Z}.
\end{equation}
Then ${\cal A}=\bigoplus_{r\in\mbb{Z}}{\cal A}_{\la r\ra}$. It was
proved in [7] that  ${\cal A}_{\la r\ra}$ forms an
infinite-dimensional irreducible highest weight
$sl(n,\mbb{C})$-module with $x_m^r$ as a highest weight vector of
weight $r\lmd_{m-1}-(r+1)\lmd_m$ if $r\geq 0$, and with
$x_{m+1}^{-r}$ as a highest weight vector of weight
$(r-1)\lmd_m-r\lmd_{m+1}$ when $r<0$. Here and in the rest of this
paper, $\lmd_i$ always denotes the $i$th fundamental weight. Our
first goal of this paper is to decompose ${\cal A}$ as a direct sum
of irreducible submodules
 and  construct a basis for each irreducible submodule for the
 restricted noncanonical representation of
$so(n,\mbb{C})$ and $sp(n,\mbb{C})$ ($n$ is even) given by the above
formulas under an action of a permutation on $\ol{1,n}$.  Xu's work
[8] on flag partial differential equations is used in the case of
$so(n,\mbb{C})$.

 Let
\begin{equation}
{\cal B}=\mbb{C}[x_1,...,x_n,y_1,...,y_n].
\end{equation}
 Define
a representation of $sl(n,\mbb{C})$ on ${\cal B}$ via
\begin{equation}
E_{i,j}|_{\cal B}=x_i\ptl_{x_j}-y_j\ptl_{y_i},\qquad
i,j=1,...,n.\end{equation} Set
\begin{equation}{\cal
B}_{\ell_1,\ell_2}=\mbox{Span}\:\{x_1^{\alpha_1}\cdots
x_n^{\alpha_n}y_1^{\beta_1}\cdots y_n^{\beta_n}\mid
\alpha_r,\beta_s\in\mbb{N};\;\sum_{r=1}^n\alpha_r=\ell_1,\;\sum_{s=1}^n\beta_s=\ell_2\}
\end{equation}  for $\ell_1,\ell_2\in\mbb{N}$. Denote\begin{equation}
{\cal H}_{\ell_1,\ell_2}=\{f\in {\cal B}_{\ell_1,\ell_2}\mid
(\sum_{i=1}^n\ptl_{x_i}\ptl_{y_i})(f)=0\}.\end{equation} It was
proved in [8] that ${\cal H}_{\ell_1,\ell_2}$ are irreducible
$sl(n,\mbb{C})$-submodules and
\begin{equation}{\cal B}_{\ell_1,\ell_2}={\cal
H}_{\ell_1,\ell_2}\bigoplus(\sum_{i=1}^nx_iy_i){\cal
B}_{\ell_1-1,\ell_2-1}.\end{equation} Moreover, a basis for each
${\cal H}_{\ell_1,\ell_2}$ was given.  Our second goal is to
decompose ${\cal B}$ as a direct sum of irreducible submodules
 under the noncanonical representation of $sl(n,\mbb{C})$ obtained by
 swapping some  $-x_r$ and $\ptl_{x_r}$ as (1.6).
Again Xu's work [8] is used to construct a basis for each
irreducible submodule.

The paper is organized as follows. In section 2, we study the
noncanonical polynomial representations of $sp(n,\mbb{C})$ mentioned
in the above. The results on the noncanonical polynomial
representations of $so(n,\mbb{C})$ with even $n$ are given in
Section 3. In Section 4, we investigate the noncanonical polynomial
representations of $so(n,\mbb{C})$ with odd $n$. Section 5 is
devoted to the noncanonical polynomial representations of
$sl(n,\mbb{C})$ mentioned in the last paragraph.

\section{Noncanonical Representations of  $sp(2n,\mathbb{C})$}
\setcounter{equation}{0}

In this section, we study the canonical representation of the
symplectic Lie algebra $sp(n,\mbb{C})$ ($n$ is even) defined via
(1.6) and a permutation on $\ol{1,n}$ . We  decompose ${\cal
A}=\mbb{C}[x_1,....,x_n]$ into a direct sum of irreducible
submodules and give a basis for each submodules.

For notational convenience, we use $2n$ instead of $n$. Moreover, in
this section, we always use $\mathcal{G}$ to denote  the symplectic
Lie algebra
\[sp(2n,\mathbb{C} )=\sum\limits_{i,j}^n\mathbb{C} (E_{i,j}-E_{n+j,n+i})+\sum\limits_{1\leq
i\leq j\leq n}[\mathbb{C} (E_{n+i,j}+E_{n+j,i})+\mathbb{C}
(E_{i,n+j}+E_{j,n+i})].\] Now ${\cal A}=\mathbb{C}[x_1,x_2,\cdots
,x_{2n}]$ is the algebra of polynomials in $2n$ variables. Note
\[
H=\sum\limits_{i=1}^n\mbb{C}(E_{i,i}-E_{n+i,n+i}),
\]
is a Cartan subalgebra. Take
$\{E_{i,j}-E_{n+j,n+i},E_{i,n+j}+E_{j,n+i}\mid 1\leq i<j\leq n\}$
and $\{E_{i,n+i}\mid i\in \overline{1,n}\}$ as positive root
vectors.

Recall that in the canonical representation (1.2), we can view
$(x_i,\ptl_{x_j})$ as the coordinates of matrix. Let $\{S,T\}$ be a
partition of $\overline{1,2n}$. Swapping \[-x_i\leftrightarrow
\ptl_{x_i}\qquad\mbox{for}\;i\in T,\] we obtain the following
noncanonical representation of $sp(2n,\mathbb{C})$ on ${\cal A}$ via
\begin{equation}
E_{i,j}|_{\cal A}=\left\{
\begin{array}{ll}
x_i\partial_{x_j},&\mbox{if}\; i,j\in S,\\
-x_ix_j,&{\rm if}\; i\in S\; {\rm and}\; j\in T,\\
\partial_{x_i}\partial_{x_j},&{\rm if}\; i\in T\; {\rm and}\; j\in S,\\
-x_j\partial_{x_i}-\delta_{i,j},&{\rm if}\; i,j\in T.\\
\end{array}
\right.
\end{equation}
 Set $${\cal A}_{\la k\ra }=\mbox{Span}\:\{x^\alpha\mid\alpha\in\mbb{N}^{\:2n}; \sum_{i\in
S}\alpha_i-\sum_{r\in T}\alpha_r=k\}.$$ Since
$$[\xi|_{\cal A},\sum_{i\in
S}x_i\ptl_{x_i}-\sum_{r\in
T}x_r\ptl_{x_r}]=0\qquad\mbox{for}\;\;\xi\in sp(2n,\mathbb{C}),$$ we
have the following simple fact:

\begin{lem} The subspace
${\cal A}_{\la k\ra }$ is a $sp(2n,\mathbb{C})$-submodule of ${\cal
A}$. \hfill$\Box$
\end{lem}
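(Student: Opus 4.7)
The plan is straightforward since the excerpt already hands us the crucial observation: the displayed identity $[\xi|_{\cal A}, \sum_{i\in S} x_i\partial_{x_i}-\sum_{r\in T} x_r\partial_{x_r}]=0$. My argument would organize this into three explicit steps.

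First, I would introduce the degree operator $D=\sum_{i\in S} x_i\partial_{x_i}-\sum_{r\in T} x_r\partial_{x_r}$ and note that a monomial $x^\alpha$ satisfies $D(x^\alpha)=\bigl(\sum_{i\in S}\alpha_i-\sum_{r\in T}\alpha_r\bigr)x^\alpha$. Therefore ${\cal A}_{\la k\ra}$ is precisely the $k$-eigenspace of $D$, and ${\cal A}=\bigoplus_{k\in\mathbb{Z}}{\cal A}_{\la k\ra}$ is the eigenspace decomposition of $D$. This reduces the lemma to the assertion that the $sp(2n,\mathbb{C})$-action commutes with $D$.

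Second, I would verify $[\xi|_{\cal A}, D]=0$ by linearity on a generating set, checking each $E_{i,j}|_{\cal A}$ appearing in (2.1). A direct computation using $[\partial_{x_a},x_b]=\delta_{a,b}$ handles the four cases uniformly ($i,j\in S$; $i\in S,j\in T$; $i\in T,j\in S$; $i,j\in T$): the contributions from $S$-variables and $T$-variables to the commutator cancel in each case, and the constant $-\delta_{i,j}$ in the fourth case commutes with $D$ trivially. Finally, since $D$ commutes with the action, every $D$-eigenspace is stable under $\xi|_{\cal A}$: if $Df=kf$, then $D(\xi f)=\xi(Df)=k(\xi f)$, so $\xi f\in {\cal A}_{\la k\ra}$.

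The main obstacle is nothing more than careful sign-tracking in the four-case commutator computation. The key subtlety is that the swap $-x_i\leftrightarrow\partial_{x_i}$ for $i\in T$ flips the sign of the eigenvalue under the usual grading, and this is precisely compensated by the minus sign in front of $\sum_{r\in T} x_r\partial_{x_r}$ in the definition of $D$; once this encoding is in place, no further adjustment is needed.
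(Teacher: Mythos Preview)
Your proposal is correct and is exactly the approach the paper takes: the paper states the commutator identity $[\xi|_{\cal A},D]=0$ and then records the lemma as a ``simple fact'' with no further argument. Your write-up simply fills in the routine details (eigenspace interpretation, case-by-case verification) that the paper leaves implicit.
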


Define a bilinear form $(\cdot|\cdot)$ on ${\cal A}$ by
\begin{equation}
(x^\alpha|x^\beta)=\delta_{\alpha,\beta}(-1)^{\sum\limits_{i\in
T}\alpha_i}\alpha!,
\end{equation}
where
$\alpha=(\alpha_1,\cdots,\alpha_{2n}),x^\alpha=\prod\limits_{i=1}^{2n}x^{\alpha_i}$.
\begin{lem}
For any $\alpha,\beta\in \mathbb{N}^{2n}$ and $g\in gl(2n,\mbb{C})$,
\begin{equation}
(g.x^\alpha|x^\beta)=(x^\alpha|^tg.x^\beta)
\end{equation}
where $^tg$ is the transpose of $g$.
\end{lem}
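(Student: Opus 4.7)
The plan is to use bilinearity to reduce the claim to matrix units $E_{i,j}$, and then verify the resulting identity by splitting into the four cases of (2.1). Since the $E_{i,j}$ span $gl(2n,\mbb{C})$ and $^tE_{i,j}=E_{j,i}$, and since both sides of (2.3) are linear in $g$, it suffices to prove
\[
(E_{i,j}.x^\alpha\,|\,x^\beta)=(x^\alpha\,|\,E_{j,i}.x^\beta)\qquad\mbox{for all}\ i,j\in\ol{1,2n},\ \alpha,\beta\in\mbb{N}^{\:2n}.
\]
Under the noncanonical action (2.1), applying $E_{i,j}$ to a monomial $x^\alpha$ yields a scalar multiple of a single monomial $x^\gamma$, and the bilinear form (2.2) is diagonal on the monomial basis. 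Thus in each case both sides vanish unless $\beta$ is the unique multi-index linked to $\alpha$ by the action, so only one scalar identity has to be tested.

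I would carry out the four cases determined by the partition $\{S,T\}$, writing $e_k\in\mbb{N}^{\:2n}$ for the $k$th standard unit vector: (i) $i,j\in S$, with $E_{i,j}=x_i\ptl_{x_j}$ and $\beta=\alpha-e_j+e_i$; (ii) $i\in S,\ j\in T$, with $E_{i,j}=-x_ix_j$ and $\beta=\alpha+e_i+e_j$; (iii) $i\in T,\ j\in S$, with $E_{i,j}=\ptl_{x_i}\ptl_{x_j}$ and $\beta=\alpha-e_i-e_j$; and (iv) $i,j\in T$, with $E_{i,j}=-x_j\ptl_{x_i}-\delta_{i,j}$ and $\beta=\alpha-e_i+e_j$ (or $\beta=\alpha$ when $i=j$). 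In each case the identity reduces to a direct factorial manipulation using $(\alpha+e_k)!=(\alpha_k+1)\alpha!$, together with the elementary rule $\ptl_{x_k}(x^\alpha)=\alpha_k\,x^{\alpha-e_k}$.

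The main point to watch, and essentially the only subtlety, is the sign $(-1)^{\sum_{k\in T}\alpha_k}$ built into (2.2). In Cases (i) and (iv) the $T$-degree is preserved when passing from $\alpha$ to $\beta$, so the signs on the two sides agree automatically. In Case (ii) the $T$-degree of $\beta$ exceeds that of $\alpha$ by one (the new index $j\in T$), producing an extra factor $(-1)$ that exactly cancels the minus sign in $E_{i,j}=-x_ix_j$; Case (iii) is the dual situation. The subcase $i=j$ of Case (iv) is also where the constant correction $-\delta_{i,j}$ in (2.1) becomes essential: it absorbs the commutator $[\ptl_{x_i},x_i]=1$ that would otherwise break the symmetry between $-x_i\ptl_{x_i}$ and its transpose. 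Once this sign bookkeeping is done, the four scalar identities are immediate and the lemma follows.
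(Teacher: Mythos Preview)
Your proposal is correct and follows essentially the same approach as the paper: reduce by linearity to the matrix units $E_{i,j}$, then verify the identity case by case according to whether $i,j$ lie in $S$ or $T$, using the explicit formulas (2.1) and the diagonal form (2.2). The paper separates the diagonal case $i=j$ from the four off-diagonal cases, whereas you fold it into Cases (i) and (iv); apart from this minor organizational difference and your added commentary on the sign bookkeeping, the two arguments are the same.
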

\begin{proof}
It is sufficient to show that
\begin{equation}
\big(E_{i,j}.(x_i^{\alpha_i}x_j^{\alpha_j})|x_i^{\beta_i}x_j^{\beta_j}\big)=\big(x_i^{\alpha_i}x_j^{\alpha_j}|E_{j,i}.(x_i^{\beta_i}x_j^{\beta_j})\big)
\;{\rm for}\; i\neq j\in\overline{1,2n},
\end{equation}
and
\begin{equation}
\big(E_{i,i}.(x_i^{\alpha_i})|x_i^{\beta_i}\big)=\big(x_i^{\alpha_i}|E_{i,i}.(x_i^{\beta_i})\big)\;
{\rm for }\; i\in \overline{1,2n}.
\end{equation}
If $i,j\in S$,
\[
\begin{array}{lll}
\big(E_{i,j}.(x_i^{\alpha_i}x_j^{\alpha_j})|x_i^{\beta_i}x_j^{\beta_j}\big)&=&\alpha_j\delta_{\alpha_i+1,\beta_i}\delta_{\alpha_j-1,\beta_j}(\alpha_i+1)!(\alpha_j-1)!\\
&=&\delta_{\alpha_i,\beta_i-1}\delta_{\alpha_j,\beta_j+1}\beta_i\alpha_i!\alpha_j!\\
&=&\big(x_i^{\alpha_i}x_j^{\alpha_j}|E_{j,i}.(x_i^{\beta_i}x_j^{\beta_j})\big).
\end{array}
\]
For $i\in S,j\in T$, we have
\[
\begin{array}{lll}
\big(E_{i,j}.(x_i^{\alpha_i}x_j^{\alpha_j})|x_i^{\beta_i}x_j^{\beta_j}\big)&=&-\delta_{\alpha_i+1,\beta_i}\delta_{\alpha_j+1,\beta_j}(-1)^{\alpha_j+1}(\alpha_i+1)!(\alpha_j+1)!\\
&=&\delta_{\alpha_i,\beta_i-1}\delta_{\alpha_j-1,\beta_j}(-1)^{\alpha_j}\beta_i\beta_j\alpha_i!\alpha_j!\\
&=&\big(x_i^{\alpha_i}x_j^{\alpha_j}|E_{j,i}.(x_i^{\beta_i}x_j^{\beta_j})\big).
\end{array}
\]
When $i\in T,j\in S$,
\[
\begin{array}{lll}
\big(E_{i,j}.(x_i^{\alpha_i}x_j^{\alpha_j})|x_i^{\beta_i}x_j^{\beta_j}\big)&=&\alpha_i\alpha_j\delta_{\alpha_i-1,\beta_i}\delta_{\alpha_j-1,\beta_j}(-1)^{\alpha_i-1}(\alpha_i-1)!(\alpha_j-1)!\\
&=&-\delta_{\alpha_i,\beta_i+1}\delta_{\alpha_j,\beta_j+1}(-1)^{\alpha_i}\alpha_i!\alpha_j!\\
&=&\big(x_i^{\alpha_i}x_j^{\alpha_j}|E_{j,i}.(x_i^{\beta_i}x_j^{\beta_j})\big).
\end{array}
\]
Assuming $i,j\in T$, we get
\[
\begin{array}{lll}
\big(E_{i,j}.(x_i^{\alpha_i}x_j^{\alpha_j})|x_i^{\beta_i}x_j^{\beta_j}\big)&=&-\alpha_i\delta_{\alpha_i-1,\beta_i}\delta_{\alpha_j+1,\beta_j}(-1)^{\alpha_i+\alpha_j}(\alpha_i-1)!(\alpha_j+1)!\\
&=&-\delta_{\alpha_i,\beta_i+1}\delta_{\alpha_j,\beta_j-1}(-1)^{\alpha_i+\alpha_j}\alpha_i!\beta_j\alpha_j!\\
&=&\big(x_i^{\alpha_i}x_j^{\alpha_j}|E_{j,i}.(x_i^{\beta_i}x_j^{\beta_j})\big).
\end{array}
\]
If $i\in S$,
\[
\big(E_{i,i}.(x_i^{\alpha_i})|x_i^{\beta_i}\big)=\alpha_i\delta_{\alpha_i,\beta_i}\alpha_i!=\beta_i\delta_{\alpha_i,\beta_i}\alpha_i!=
\big(x_i^{\alpha_i}|E_{i,i}.(x_i^{\beta_i})\big).
\]
When $i\in T$,
\[
\big(E_{i,i}.(x_i^{\alpha_i})|x_i^{\beta_i}\big)=-(\alpha_i+1)\delta_{\alpha_i,\beta_i}(-1)^{\alpha_i}\alpha_i!=-(\beta_i+1)\delta_{\alpha_i,\beta_i}(-1)^{\alpha_i}\alpha_i!=\big(x_i^{\alpha_i}|E_{i,i}.(x_i^{\beta_i})\big).
\]
Hence (2.4)(2.5) holds.
\end{proof}\psp

Let ${\cal G}_+$ be the Lie subalgebra spanned by positive root
vectors of ${\cal G}$. A nonzero element $f\in{\cal A}$ is called
{\it singular} (with respect to ${\cal G}$) if
$$H.f\subset\mathbb{C}f,\qquad {\cal G}_+.f=\{0\}.$$
From now on,  we  count the number of singular vectors up to scalar
multiple. Moreover, an element $g\in{\cal A}$ is called {\it
nilpotent with respect to} ${\cal G}_+$ if there exist a positive
integer $m$ such that
$$\xi_1\cdots \xi_m(g)=0\qquad\mbox{for
any}\;\xi_1,...,\xi_m\in{\cal G}_+.$$ A subspace $V$ of ${\cal A}$
is called {\it nilpotent with respect to} ${\cal G}_+$ if all its
elements are nilpotent with respect to ${\cal G}_+$. A linear
transformation $\sigma$ on ${\cal A}$ is called {\it locally
nilpotent} if for any $f\in{\cal A}$, there exists a positive
integer $m$ such that $\sigma^m(f)=0$. If the elements of ${\cal
G}_+|_{\cal A}$ are locally nilpotent and
$${\cal G}_+.{\cal A}_i\subset \sum_{r=0}^i{\cal
A}_r\qquad\mbox{for any}\;\;i\in\mbb{N},$$ then any element of
${\cal A}$ is nilpotent with respect to ${\cal G}_+$ by Theorem 3.3
in [4].\psp

\begin{lem}
If a submodule $N$ of ${\cal A}$ is nilpotent with respect to ${\cal
G}_+$, $N$ contains only one singular vector $v$ and $(v|v)\neq0$,
then $N$ is irreducible.
\end{lem}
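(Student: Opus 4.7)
The plan is to prove the stronger statement $N=\mathcal{U}(\mathcal{G}).v$; once this is in hand, irreducibility follows immediately, because any nonzero submodule $M\subseteq N$ must contain $v$ (hence $W:=\mathcal{U}(\mathcal{G}).v\subseteq M$) by a standard extraction argument: any nonzero $H$-weight vector $u\in M$ yields, after sufficiently many applications of $\mathcal{G}_+$, a nonzero vector annihilated by $\mathcal{G}_+$, whose nonzero $H$-weight components are then singular vectors lying in $M$; by the uniqueness hypothesis each such component is a scalar multiple of $v$, so $v\in M$.

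Thus the core task is to prove $W=N$. I would first use Lemma~2.2 together with the transpose-closure of $\mathcal{G}=sp(2n,\mathbb{C})$ (checked on the three types of generators $E_{i,j}-E_{n+j,n+i}$, $E_{i,n+j}+E_{j,n+i}$, $E_{n+i,j}+E_{n+j,i}$) to conclude that, for any $\mathcal{G}$-submodule $P\subseteq N$, the orthogonal complement $P^{\perp}\cap N$ is again a $\mathcal{G}$-submodule: for $w\in P^{\perp}\cap N$ and $g\in\mathcal{G}$, $(P\mid g.w)=({}^{t}g.P\mid w)\subseteq(P\mid w)=0$. Applying this with $P=W$: if $W^{\perp}\cap N$ were nonzero, the extraction argument above would place $v$ inside it, but then $v\in W\cap W^{\perp}$ would yield $(v\mid v)=0$, contradicting the hypothesis. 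Hence $W^{\perp}\cap N=0$.

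The final and technically delicate step is to upgrade $W^{\perp}\cap N=0$ to $W=N$; in infinite dimensions this is not automatic. Since ${}^{t}h=h$ for $h\in H$ (diagonal), weight spaces of different weights are orthogonal under $(\cdot\mid\cdot)$, so $W^{\perp}\cap N=0$ is equivalent, weight-by-weight, to the pairing $W_{\mu}\times N_{\mu}\to\mathbb{C}$ being nondegenerate on the $N_{\mu}$ side. Because $W$ is generated by the highest-weight vector $v$, each $W_{\mu}$ is finite-dimensional by the usual PBW count, so the nondegenerate pairing embeds $N_{\mu}$ into $W_{\mu}^{\ast}$, giving $\dim N_{\mu}\le\dim W_{\mu}$; combined with the obvious inclusion $W_{\mu}\subseteq N_{\mu}$ this forces $W_{\mu}=N_{\mu}$ for every $\mu$, and hence $W=N$. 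The main obstacle is precisely this passage: one must combine the contravariance of $(\cdot\mid\cdot)$ with the finite-dimensionality of the weight spaces of the highest-weight module $W=\mathcal{U}(\mathcal{G}).v$, since the earlier orthogonal-complement argument alone is insufficient in infinite dimensions.
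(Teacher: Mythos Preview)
Your argument is correct and follows the same outline as the paper: both set $W=N_1=U(\mathcal{G}).v$, observe it is irreducible because any nonzero submodule of $N$ contains a singular vector, show $N_1^{\perp}\cap N=0$ from $(v\mid v)\neq 0$, and conclude $N=N_1$. The paper reaches the last step by first introducing the radical $\mathcal{R}=\{u\in N:(u\mid w)=0\ \forall w\in N\}$, showing $\mathcal{R}=0$, and then asserting $N=N_1\oplus N_1^{\perp}$; from $N_1^{\perp}=0$ it concludes. You instead bypass $\mathcal{R}$ and go directly from $N_1^{\perp}\cap N=0$ to $N_{\mu}\hookrightarrow W_{\mu}^{*}$, using that $W_{\mu}$ is finite-dimensional (PBW for a highest-weight module) to force $\dim N_{\mu}\le\dim W_{\mu}$ and hence $N_{\mu}=W_{\mu}$.

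The difference is that you make explicit precisely the point the paper glosses over: in infinite dimensions, nondegeneracy of the form on $N$ does \emph{not} by itself yield $N=N_1\oplus N_1^{\perp}$; one needs the finite-dimensionality of the weight spaces of $N_1$ (equivalently, one works weight-by-weight, where the form restricted to the finite-dimensional space $(N_1)_{\mu}$ is nondegenerate since $N_1\cap N_1^{\perp}=0$). Your weight-space dimension count is exactly the missing justification for the paper's direct-sum claim, so the two arguments are really the same once that gap is filled.
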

\begin{proof} Under the nilpotent assumption, any nonzero submodule
of $N$ contains a singular vector. In particular,
$N_1=U(\mathcal{G}).v$ an irreducible submodule by the uniqueness of
singular vector.  Set $N_1^\bot=\{u\in N|(u|w)=0,\forall w\in N_1\}$
and ${\cal R}=\{u\in N|(u|w)=0,\forall w\in N\}$. Notice that
$N_1,N_1^\bot,{\rm and}\ \ {\cal R}$ are submodules of $N$. Hence if
${\cal R}\neq0$, it should contain a nonzero singular vector, which
is impossible according to the assumption $(v|v)\neq0$. Therefore
${\cal R}=0$, and $N=N_1\bigoplus N_1^\bot$. But $N_1^\bot=0$ by the
same argument, and so $N=N_1$.
\end{proof}
\begin{rem}
Lemma 2.1-2.3 remain valid if $\mathcal{G}=so(n,\mathbb{C})$, ${\cal
A}=\mathbb{C}[x_1,\cdots,x_n]$.
\end{rem}
Set
\begin{equation}
\begin{array}{c}
S_1=\{i\in \overline{1,n}|\ \ i\in S,\ \ n+i\in S\},\\
S_2=\{i\in \overline{1,n}|\ \ i\in S,\ \ n+i\in T\},\\
T_1=\{i\in \overline{1,n}|\ \ i\in T,\ \ n+i\in T\},\\
T_2=\{i\in \overline{1,n}|\ \ i\in T,\ \ n+i\in S\}.
\end{array}
\end{equation}

We first consider $n=2$.\psp

{\bf Case 1}
\begin{equation}
T=\{1\},\ \ S=\{2,3,4\}
\end{equation}
\psp

In this case, ${\cal A}$ is nilpotent with respect to ${\cal G}_+$.
 Let $f=f(x_1,x_2,x_3,x_4)\in {\cal A}$ be a singular vector. Since
\begin{displaymath}
E_{2,4}(f)=x_2\partial_{x_4}(f)=0,
\end{displaymath}
equivalently,
\begin{displaymath}
\partial_{x_4}(f)=0,
\end{displaymath}
and
\begin{displaymath}
(E_{1,4}+E_{2,3})(f)=(\partial_{x_1}\partial_{x_4}+x_2\partial_{x_3})(f)=x_2\partial_{x_3}(f)=0
\end{displaymath}
which implies
\begin{displaymath}
\partial_{x_3}(f)=0.
\end{displaymath}
Therefore, $f$ is independent of $x_3,x_4$. Since
\begin{displaymath}
(E_{1,2}-E_{4,3})(f)=\partial_{x_1}\partial_{x_2}(f)=0,
\end{displaymath}
any singular vector must be of the form  $x_1^{k_1}$ or $x_2^{k_2}$.
Hence the only singular vector in ${\cal A}_{\la k\ra}$ (resp.
${\cal A}_{\la-k\ra}$) is $x_2^k$ (resp. $x_1^k$) when $k\geq 0$.

 Denote by $\lambda_i$ the $i$th fundamental weight of
$sp(2n,\mbb{C})$. By Lemma 2.3, we have:

\begin{lem} The subspace
 ${\cal A}_{\la k\ra}$ (resp. ${\cal A}_{\la -k\ra}$) is an irreducible highest weight $sp(4,\mathbb{C})$-module with the highest
 weight $-(k+1)\lambda_1+k\lambda_2$ (resp. $-(k+1)\lambda_1$). The corresponding highest weight vector is
  $x_2^k$ (resp. $x_1^k$).\hfill$\Box$
\end{lem}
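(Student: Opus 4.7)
The plan is to apply Lemma 2.3 to each of $\mathcal{A}_{\la k\ra}$ and $\mathcal{A}_{\la-k\ra}$ and then read off the Cartan eigenvalues of the unique singular vector to identify the module as a highest weight module of the asserted weight.

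For the nilpotency hypothesis, specializing (2.1) to $T=\{1\}$, $S=\{2,3,4\}$, the positive root vectors listed just before (2.1) act on $\mathcal{A}$ as
\[
\ptl_{x_1}\ptl_{x_3},\quad x_2\ptl_{x_4},\quad \ptl_{x_1}\ptl_{x_2}-x_4\ptl_{x_3},\quad \ptl_{x_1}\ptl_{x_4}+x_2\ptl_{x_3}.
\]
Each of these either preserves the total degree or lowers it by two, so $\mathcal{G}_+\cdot\mathcal{A}_i\subset\sum_{r\leq i}\mathcal{A}_r$; and each is locally nilpotent on $\mathcal{A}$, since the differential parts annihilate any monomial after enough iterations and the shift terms $x_4\ptl_{x_3}$, $x_2\ptl_{x_3}$, $x_2\ptl_{x_4}$ each eventually empty the variable being differentiated (commuting summands are handled by the standard binomial argument). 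The criterion recorded just before Lemma 2.3 (Theorem 3.3 of [4]) therefore gives that $\mathcal{A}$, and hence each $\mathcal{A}_{\la\pm k\ra}$, is nilpotent with respect to $\mathcal{G}_+$.

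Uniqueness of the singular vector in each grade piece is already established by the computation immediately preceding the lemma: every singular vector in $\mathcal{A}$ is a power of $x_1$ or $x_2$, and since $2\in S$ while $1\in T$, the only singular vector in $\mathcal{A}_{\la k\ra}$ is $x_2^k$ and the only one in $\mathcal{A}_{\la-k\ra}$ is $x_1^k$. Using (2.2) one checks $(x_2^k|x_2^k)=k!\neq 0$ and $(x_1^k|x_1^k)=(-1)^k k!\neq 0$, so Lemma 2.3 yields irreducibility. To finish, (2.1) gives $E_{1,1}-E_{3,3}=-x_1\ptl_{x_1}-1-x_3\ptl_{x_3}$ and $E_{2,2}-E_{4,4}=x_2\ptl_{x_2}-x_4\ptl_{x_4}$; on $x_2^k$ these produce eigenvalues $-1$ and $k$, and on $x_1^k$ they produce $-(k+1)$ and $0$. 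Under $\lmd_1=\epsilon_1$, $\lmd_2=\epsilon_1+\epsilon_2$ for $sp(4,\mbb{C})$ these rewrite as $-(k+1)\lmd_1+k\lmd_2$ and $-(k+1)\lmd_1$ respectively. Since each irreducible module is generated by a vector annihilated by $\mathcal{G}_+$, it is a highest weight module with that vector as highest weight vector. The only real work is the nilpotency verification, where the mild subtlety lies in the non-differential summands inside two of the positive root vectors; once that check is in hand, the rest follows mechanically from Lemma 2.3 and the Cartan computation.
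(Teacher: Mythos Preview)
Your proof is correct and follows exactly the approach of the paper: the paper simply writes ``By Lemma 2.3'' after the singular vector computation and the remark that $\mathcal{A}$ is nilpotent with respect to $\mathcal{G}_+$, and you have filled in the routine details (the explicit form of the positive root operators, the check that $(v|v)\neq 0$, and the Cartan eigenvalue computation) that the paper leaves implicit.
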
\psp

{\bf Case 2}
\begin{equation}T=\{1,2\},\;\;S=\{3,4\}
\end{equation}
\psp

In this case, ${\cal A}$ is again nilpotent with respect to ${\cal
G}_+$. Suppose $f=f(x_1,x_2,x_3,x_4)\in {\cal A}$ is a singular
vector. Since
\[(E_{1,2}-E_{4,3})(f)=-(x_2\partial_{x_1}+x_4\partial_{x_3})(f)=0,\]
we can write $f=g(x_2,x_4,u)$, where $u=x_1x_4-x_2x_3$. Moreover,
\[E_{1,3}(g)=-x_2x_4\frac{\partial^2g}{\partial u^2}=0,\]
which means
\[\frac{\partial^2g}{\partial u^2}=0.\]
So we can rewrite $g=g_1(x_2,x_4)+g_2(x_2,x_4)u$. Furthermore,
\begin{equation}
\begin{array}{lll}
E_{2,4}(g)&=&\partial_{x_2}\partial_{x_4}(g_1)+u\partial_{x_2}\partial_{x_4}(g_2)-x_3\partial_{x_4}(g_2)+x_1\partial_{x_2}(g_2)\\
&=&\partial_{x_2}\partial_{x_4}(g_1)+x_1(x_4\partial_{x_2}\partial_{x_4}(g_2)+\partial_{x_2}(g_2))\\
& &-x_3(x_2\partial_{x_2}\partial_{x_4}(g_2)+\partial_{x_4}(g_2))=0,
\end{array}
\end{equation}
which implies
$$\partial_{x_2}\partial_{x_4}(g_1)=0$$
and $$(x_4\partial_{x_4}+1)\partial_{x_2}(g_2)=0,\;\;
(x_2\partial_{x_2}+1)\partial_{x_4}(g_2)=0.$$ Hence
\[
\partial_{x_2}(g_2)=0,\;\;\partial_{x_4}(g_2)=0,
\]
 equivalently,
$g_2$ is independent of $x_2$ and $x_4$. Thus we obtain:
\begin{lem}
All singular vectors in ${\cal A}$ are
$x_2^{k_2}\;(k_2>0),\;x_4^{k_4}\;(k_4>0),\;x_1x_4-x_2x_3$ and 1.
Moreover, ${\cal A}_{\la k\ra}$ (resp. ${\cal A}_{\la-k\ra}$) is an
irreducible highest weight module with the highest weight
$k\lambda_1-(k+1)\lambda_2$ when $k>0$. \hfill$\Box$
\end{lem}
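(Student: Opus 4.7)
The plan is to finish the classification of singular vectors that the preceding computation has nearly completed, and then invoke Lemma 2.3 for the irreducibility. The preceding derivation has reduced an arbitrary singular vector $f\in\mathcal{A}$ to $f=g_1(x_2,x_4)+g_2\cdot u$ with $u=x_1x_4-x_2x_3$, where $g_2\in\mathbb{C}$ is a constant and $\partial_{x_2}\partial_{x_4}(g_1)=0$. The last condition forces $g_1=a(x_2)+b(x_4)$ for single-variable polynomials $a,b$, so $f=a(x_2)+b(x_4)+g_2\,u$. Since a singular vector is by definition a Cartan eigenvector, $f$ must be $H$-homogeneous; computing the $\epsilon$-coordinates of the monomials involved gives
\[
\mathrm{wt}(1)=(-1,-1),\ \ \mathrm{wt}(u)=(-2,-2),\ \ \mathrm{wt}(x_2^{k})=(-1,-k-1),\ \ \mathrm{wt}(x_4^{k})=(-1,-k-1),
\]
which are pairwise distinct except for the coincidence of $x_2^k$ and $x_4^k$ for each $k>0$. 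Splitting $f$ into its $H$-weight components therefore produces the claimed complete list $\{1,u,x_2^{k_2},x_4^{k_4}\}$ of singular vectors up to scalar multiple.

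Next I would establish the irreducibility of $\mathcal{A}_{\langle k\rangle}$ and $\mathcal{A}_{\langle -k\rangle}$ for $k>0$ by verifying the three hypotheses of Lemma 2.3. For nilpotency with respect to $\mathcal{G}_+$, I inspect (2.1) in Case 2: the positive root vectors $E_{1,3}$, $E_{1,4}+E_{2,3}$, and $E_{2,4}$ act as second-order differential operators that strictly reduce total polynomial degree by $2$, while $E_{1,2}-E_{4,3}=-x_2\partial_{x_1}-x_4\partial_{x_3}$ preserves total degree but strictly decreases the $T$-exponent $a_1+a_3$. The integer-valued weight $\Phi(x^\alpha)=3(a_1+a_3)+2(a_2+a_4)$ on monomials is therefore strictly lowered (by at least $1$) by each positive root vector, which forces every element of $\mathcal{A}$, hence of any submodule, to be nilpotent with respect to $\mathcal{G}_+$. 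For uniqueness of the singular vector: among $1,u,x_2^{k_2},x_4^{k_4}$, only $x_4^k\in\mathcal{A}_{\langle k\rangle}$ and only $x_2^k\in\mathcal{A}_{\langle -k\rangle}$, since $1$ and $u$ lie in $\mathcal{A}_{\langle 0\rangle}$. For the nondegeneracy condition, (2.2) gives $(x_4^k|x_4^k)=k!$ and $(x_2^k|x_2^k)=(-1)^kk!$, both nonzero, since $4\in S$ and $2\in T$.

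Lemma 2.3 then yields that $\mathcal{A}_{\langle \pm k\rangle}$ is irreducible, and since each is generated by its singular vector it is a highest weight module whose highest weight is the Cartan weight of that vector; a direct check confirms that both $x_4^k$ and $x_2^k$ have weight $k\lambda_1-(k+1)\lambda_2$. The only subtle point is the coincidence of weights of $x_2^k$ and $x_4^k$, which could a priori threaten the uniqueness hypothesis of Lemma 2.3, but it is harmless because the two vectors lie in the disjoint submodules $\mathcal{A}_{\langle -k\rangle}$ and $\mathcal{A}_{\langle k\rangle}$. The main bookkeeping task is producing a grading such as $\Phi$ that certifies $\mathcal{G}_+$-nilpotency in the presence of the degree-preserving root vector $E_{1,2}-E_{4,3}$; everything else is a direct application of the machinery set up earlier in the section.
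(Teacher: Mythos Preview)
Your argument is correct and follows the same overall route as the paper: finish the classification of singular vectors from the preceding reduction, then invoke Lemma~2.3. The paper itself gives no proof beyond the $\Box$, relying on the preceding computation and the blanket assertion ``${\cal A}$ is again nilpotent with respect to ${\cal G}_+$'' at the start of Case~2; you supply the missing details. Two small remarks. First, your phrase ``the $T$-exponent $a_1+a_3$'' is a misnomer, since here $T=\{1,2\}$; the quantity $a_1+a_3$ is simply an ad hoc grading, not the $T$-degree (indeed $E_{1,2}-E_{4,3}$ preserves $a_1+a_2$). Your function $\Phi=3(a_1+a_3)+2(a_2+a_4)$ nonetheless does strictly decrease under every positive root vector, so the nilpotency conclusion stands. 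Second, the paper's intended nilpotency argument is the general criterion stated just before Lemma~2.3: each positive root vector is locally nilpotent and does not raise total degree, whence nilpotency follows from Engel-type considerations (Theorem~3.3 of [4]). Your explicit grading $\Phi$ is a more direct and self-contained alternative that avoids the appeal to [4].
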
\psp

As for $k=0$, we have the following result:
\begin{lem} The space
${\cal A}_{\la 0\ra}=U(\mathcal{G}).1\bigoplus U(\mathcal{G}).
(x_1x_4-x_2x_3)$. Moreover,  $U(\mathcal{G}).1$ and
$U(\mathcal{G}).(x_1x_4-x_2x_3)$ are irreducible highest weight
modules with the highest weight $-\lambda_2$ and $-2\lambda_2$,
respectively. They have the following bases
$$\{(x_1x_3)^p(x_2x_4)^q(x_1^tx_4^t+x_2^tx_3^t)|p,q\geq 0,t\geq
0\}$$ and $$\{(x_1x_3)^p(x_2x_4)^q(x_1^tx_4^t-x_2^tx_3^t)|p,q\geq
0,t>0\},$$ respectively.
\end{lem}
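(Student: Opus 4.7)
The plan is to introduce the two subspaces $V^+$ and $V^-$ spanned by the proposed basis sets, prove both are $\mathcal{G}$-stable, establish $V^+\oplus V^-=\mathcal{A}_{\la 0\ra}$, and apply Lemma~2.3 to conclude each is irreducible; the identifications $V^+=U(\mathcal{G}).1$ and $V^-=U(\mathcal{G}).(x_1x_4-x_2x_3)$ will then follow since each $V^\pm$ is irreducible and contains the appropriate singular vector. First I would confirm the highest weights from (2.1): the Cartan elements act as $H_1|_{\mathcal{A}}=-x_1\partial_{x_1}-1-x_3\partial_{x_3}$ and $H_2|_{\mathcal{A}}=-x_2\partial_{x_2}-1-x_4\partial_{x_4}$, giving $H_i.1=-1$ (so $1$ has weight $-\lambda_2$) and $H_i.(x_1x_4-x_2x_3)=-2(x_1x_4-x_2x_3)$ (so $x_1x_4-x_2x_3$ has weight $-2\lambda_2$).

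The main technical step is the $\mathcal{G}$-invariance of $V^\pm$. Setting $v^\pm_{p,q,t}:=(x_1x_3)^p(x_2x_4)^q(x_1^tx_4^t\pm x_2^tx_3^t)$, the multiplication-type negative roots give $E_{3,1}.v^\pm_{p,q,t}=-v^\pm_{p+1,q,t}$, $E_{4,2}.v^\pm_{p,q,t}=-v^\pm_{p,q+1,t}$, and, via the identity $(x_2x_3)(x_1^tx_4^t)+(x_1x_4)(x_2^tx_3^t)=(x_1x_3)(x_2x_4)(x_1^{t-1}x_4^{t-1}+x_2^{t-1}x_3^{t-1})$ (valid for $t\geq 1$), $(E_{3,2}+E_{4,1}).v^\pm_{p,q,t}=-v^\pm_{p,q,t+1}-v^\pm_{p+1,q+1,t-1}$ (with the second term absent for $t=0$). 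For the positive roots a direct computation yields $E_{1,3}.v^\pm_{p,q,t}=p(p+t)v^\pm_{p-1,q,t}$, $E_{2,4}.v^\pm_{p,q,t}=q(q+t)v^\pm_{p,q-1,t}$, $(E_{1,4}+E_{2,3}).v^\pm_{p,q,t}=(p+t)(q+t)v^\pm_{p,q,t-1}+pq\cdot v^\pm_{p-1,q-1,t+1}$, and $(E_{1,2}-E_{4,3}).v^\pm_{p,q,t}=-(p+t)v^\pm_{p,q+1,t-1}-p\cdot v^\pm_{p-1,q,t+1}$, with an analogous formula for $E_{2,1}-E_{3,4}$. In every case the output lies in $V^\pm$.

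For $V^+\oplus V^-=\mathcal{A}_{\la 0\ra}$, the constraint $\alpha_1+\alpha_2=\alpha_3+\alpha_4$ on a monomial $x_1^{\alpha_1}x_2^{\alpha_2}x_3^{\alpha_3}x_4^{\alpha_4}\in\mathcal{A}_{\la 0\ra}$ forces either $\alpha_1\geq\alpha_3$ (hence $\alpha_4\geq\alpha_2$) or the reverse; in the two cases the monomial factors uniquely as $(x_1x_3)^{\alpha_3}(x_2x_4)^{\alpha_2}x_1^tx_4^t$ or $(x_1x_3)^{\alpha_1}(x_2x_4)^{\alpha_4}x_2^tx_3^t$, with $t=|\alpha_1-\alpha_3|\geq 0$. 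Hence every monomial is $\tfrac{1}{2}(v^+_{p,q,t}\pm v^-_{p,q,t})$ when $t>0$ and $\tfrac{1}{2}v^+_{p,q,0}$ when $t=0$, giving both spanning and, via the fact that the two monomial summands of $v^\pm_{p,q,t}$ are distinct and recover $(p,q,t)$, linear independence.

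Finally, I would apply Lemma~2.3 to each $V^\pm$. Both are nilpotent with respect to $\mathcal{G}_+$: in Case~2 every positive root vector strictly decreases the lexicographic pair (total degree in $x_1,x_3$; total degree in $x_2,x_4$), so all of $\mathcal{A}$ is nilpotent. By Lemma~2.6 the only singular vectors of $\mathcal{A}_{\la 0\ra}$ are $1$ and $x_1x_4-x_2x_3$; combined with $V^+\cap V^-=0$ together with $1\in V^+$ and $x_1x_4-x_2x_3=v^-_{0,0,1}\in V^-$, this shows each $V^\pm$ contains a unique singular vector. The form (2.2) gives $(1|1)=1\neq 0$ and $(x_1x_4-x_2x_3|x_1x_4-x_2x_3)=-2\neq 0$, so Lemma~2.3 yields irreducibility; the nonzero containments $U(\mathcal{G}).1\subseteq V^+$ and $U(\mathcal{G}).(x_1x_4-x_2x_3)\subseteq V^-$ must then be equalities. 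The main obstacle is the $\mathcal{G}$-invariance computation: each individual action is a short monomial manipulation, but the mixed operators $E_{3,2}+E_{4,1}$, $E_{1,4}+E_{2,3}$, and $E_{1,2}-E_{4,3}$ produce two-term outputs whose correct re-identification as shifted $v^\pm_{p',q',t'}$ is the principal source of bookkeeping.
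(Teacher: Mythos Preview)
Your approach is correct and takes a genuinely different route from the paper's. The paper first proves the \emph{equality} $U(\mathcal{G}).1=V^+$ by double inclusion: it shows $V^+\subset U(\mathcal{G}).1$ by an induction on $t$ (using $E_{3,1},E_{4,2},E_{4,1}+E_{3,2}$ to generate the basis vectors from $1$), and then shows $U(\mathcal{G}).1\subset V^+$ by checking that only the \emph{negative} root vectors preserve $V^+$, invoking a PBW ordering. Spanning of $\mathcal{A}_{\la 0\ra}$ is obtained as you do, but linear independence is deduced by a dimension count on $\mathcal{A}_{\la 0\ra}\cap\mathcal{A}_{2m}$ rather than by your observation that the two monomial summands of $v^\pm_{p,q,t}$ determine $(p,q,t)$. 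Irreducibility is not argued explicitly in the paper's proof; it is left implicit via Lemma~2.3 once the direct-sum decomposition isolates the two singular vectors.

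Your route instead verifies that $V^\pm$ are full $\mathcal{G}$-submodules (checking \emph{all} root vectors, not just the negative ones), then applies Lemma~2.3 directly to get irreducibility, and only afterwards identifies $V^\pm$ with the cyclic submodules. This costs you a handful of extra root-vector computations but yields a cleaner logical structure, treats $V^+$ and $V^-$ symmetrically throughout, and makes the appeal to Lemma~2.3 explicit. One minor caution: several of your action formulas (e.g.\ for $E_{1,4}+E_{2,3}$ and $E_{1,2}-E_{4,3}$) acquire a factor-of-two correction at $t=0$ because $v^+_{p,q,0}=2(x_1x_3)^p(x_2x_4)^q$ collapses the two monomial summands; this does not affect the invariance conclusion but the stated coefficients need adjustment there, just as you already noted for $E_{3,2}+E_{4,1}$.
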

\begin{proof}
Set
$$V=Span\{(x_1x_3)^p(x_2x_4)^q(x_1^tx_4^t+x_2^tx_3^t)|p,q\geq
0,t\geq 0\}.$$ We will show that $V\subset U(\mathcal{G}).1$ first.
For $\forall p,q\geq 0,t\geq 0,$ if $(x_1^tx_4^t+x_2^tx_3^t)\in
U(\mathcal{G}).1$, then
$(x_1x_3)^p(x_2x_4)^q(x_1^tx_4^t+x_2^tx_3^t)\in U(\mathcal{G}).1$
because $E_{3,1}\mid_{\cal A}=-x_1x_3$ and $E_{4,2}\mid _{\cal
A}=-x_2x_4$. Now we have $1\in U(\mathcal{G}).1$. Suppose
$(x_1^{l}x_4^{l}+x_2^{l}x_3^{l})\in U(\mathcal{G}).1$ for $l<t$.
Then
\[
\begin{array}{lll}&&
-(E_{41}+E_{32})(x_1^{t-1}x_4^{t-1}+x_2^{t-1}x_3^{t-1})\\
&=&(x_1x_4+x_2x_3)(x_1^{t-1}x_4^{t-1}+x_2^{t-1}x_3^{t-1})\\
&=&x_1^tx_4^t+x_2x_3x_1^{t-1}x_4^{t-1}+x_1x_4x_2^{t-1}x_3^{t-1}+x_2^tx_3^t\\
&=&(x_1^tx_4^t+x_2^tx_3^t)+x_1x_3x_2x_4(x_1^{t-2}x_4^{t-2}+x_2^{t-2}x_3^{t-2}).
\end{array}
\]
So $(x_1^tx_4^t+x_2^tx_3^t)\in U(\mathcal{G}).1$. Hence $V\subset
U(\mathcal{G}).1$. Observe that $U(\mathcal{G}).1$ is spanned by
$$\{(E_{2,1}-E_{3,4})^{i_1}(E_{4,1}+E_{3,2})^{i_2}E_{3,1}^{i_3}E_{4,2}^{i_4}.1|i_1,i_2,i_3,i_4\geq 0\}.$$
Moreover,
$$E_{3,1}^{i_3}E_{4,2}^{i_4}.1=(-1)^{i_3+i_4}(x_1x_3)^{i_3}(x_2x_4)^{i_4}\in
V.$$ Furthermore,
\[
\begin{array}{ll}
 &(E_{4,1}+E_{3,2}).\big((x_1x_3)^p(x_2x_4)^q(x_1^tx_4^t+x_2^tx_3^t)\big)\\
=&-(x_1x_3)^p(x_2x_4)^q(x_1x_4+x_2x_3)(x_1^tx_4^t+x_2^tx_3^t)\\
=&-(x_1x_3)^p(x_2x_4)^q(x_1^{t+1}x_4^{t+1}+x_2x_3x_1^tx_4^t+x_1x_4x_2^tx_3^t+x_2^{t+1}x_3^{t+1})\\
=&-(x_1x_3)^p(x_2x_4)^q(x_1^{t+1}x_4^{t+1}+x_2^{t+1}x_3^{t+1})+(x_1x_3)^{p+1}(x_2x_4)^{q+1}(x_1^{t-1}x_4^{t-1}+x_2^{t-1}x_3^{t-1}),\\
\in &V,
\end{array}
\]
Thus $(E_{4,1}+E_{3,2})V\subset V$. Next
\[
\begin{array}{ll}
 &(E_{2,1}-E_{3,4}).\big((x_1x_3)^p(x_2x_4)^q(x_1^tx_4^t+x_2^tx_3^t)\big)\\
=&-(x_1\partial_{x_2}+x_3\partial_{x_4})[(x_1x_3)^p(x_2x_4)^q(x_1^tx_4^t+x_2^tx_3^t)]\\
=&-q(x_1x_3)^p(x_2x_4)^{q-1}x_1x_4(x_1^tx_4^t+x_2^tx_3^t)-t(x_1x_3)^{p+1}(x_2x_4)^qx_2^{t-1}x_3^{t-1}\\
 &-q(x_1x_3)^p(x_2x_4)^{q-1}x_2x_3(x_1^tx_4^t+x_2^tx_3^t)-t(x_1x_3)^{p+1}(x_2x_4)^qx_1^{t-1}x_4^{t-1}\\
=&-q(x_1x_3)^p(x_2x_4)^{q-1}(x_1^{t+1}x_4^{t+1}+x_2^{t+1}x_3^{t+1})-(q+t)(x_1x_3)^{p+1}(x_2x_4)^q(x_1^{t-1}x_4^{t-1}+x_2^{t-1}x_3^{t-1})\\
\in &V
\end{array}
\]
that is $(E_{2,1}-E_{3,4}).V\subset V$. Therefore,
$$(E_{2,1}-E_{3,4})^{i_1}(E_{4,1}+E_{3,2})^{i_2}E_{3,1}^{i_3}E_{4,2}^{i_4}.1 \in V, \; {\rm for\; all}\; i_1,i_2,i_3,i_4\geq
0.$$ Hence
$U(\mathcal{G}).1=V$.\\

 Similar conclusion for $U(\mathcal{G}).(x_1x_4-x_2x_3)$ can be proved. For any
$k_1,k_2,k_3,k_4\;(k_1+k_2=k_3+k_4)$,
\begin{equation}
\begin{array}{lll}&&
x_1^{k_1}x_2^{k_2}x_3^{k_3}x_4^{k_4}\\ \\&=& \left\{
\begin{array}{l}
\frac{1}{2}\left[(x_1x_3)^{k_3}(x_2x_4)^{k_2}(x_1^{k_1-k_3}x_4^{k_1-k_3}+x_2^{k_1-k_3}x_3^{k_1-k_3})+\right.\\
\left.(x_1x_3)^{k_3}(x_2x_4)^{k_2}(x_1^{k_1-k_3}x_4^{k_1-k_3}-x_2^{k_1-k_3}x_3^{k_1-k_3})\right]\
\ {\rm if}\; k_1>k_3,\\
\frac{1}{2}\left[(x_1x_3)^{k_1}(x_2x_4)^{k_4}(x_1^{k_3-k_1}x_4^{k_3-k_1}+x_2^{k_3-k_1}x_3^{k_3-k_1})-\right.\\
\left.(x_1x_3)^{k_1}(x_2x_4)^{k_4}(x_1^{k_3-k_1}x_4^{k_3-k_1}-x_2^{k_3-k_1}x_3^{k_3-k_1})\right]\
\ {\rm if}\; k_1\leq k_3,
\end{array}
\right.
\end{array}
\end{equation}
which implies ${\cal A}_{\la 0\ra}=U(\mathcal{G}).1+ U(\mathcal{G}).
(x_1x_4-x_2x_3)$. Finally, we have to show the independence of
$\{(x_1x_3)^p(x_2x_4)^q(x_1^tx_4^t+x_2^tx_3^t)|p,q\geq 0,t\geq 0\}$
and $\{(x_1x_3)^p(x_2x_4)^q(x_1^tx_4^t-x_2^tx_3^t)|p,q\geq 0,t>0\}$.
Note ${\cal A}_{\la0\ra}=\bigoplus_{m\geq0}{\cal A}_{\la
0\ra}\bigcap{\cal A}_{2m}$. Since
\[
\begin{array}{r}
{\cal A}_{\la0\ra}\bigcap{\cal
A}_{2m}=\mbox{Span}\;\{(x_1x_3)^p(x_2x_4)^q(x_1^tx_4^t+x_2^tx_3^t)\mid
p,q,t\geq
0;p+q+t=m\}\bigcup\\
\mbox{Span}\;\{(x_1x_3)^p(x_2x_4)^q(x_1^tx_4^t-x_2^tx_3^t)\mid
p,q\geq 0,t>0;p+q+t=m\}
\end{array}
\]
\[ \mbox{dim}\;{\cal A}_{\la0\ra}\bigcap{\cal A}_{2m}=(m+1)^2,\] and
\[
\begin{array}{l}
|\{(x_1x_3)^p(x_2x_4)^q(x_1^tx_4^t+x_2^tx_3^t)\mid p,q,t\geq
0,p+q+t=m\}|\\
+|\{(x_1x_3)^p(x_2x_4)^q(x_1^tx_4^t-x_2^tx_3^t)\mid p,q\geq
0,t>0,p+q+t=m\}|=(m+1)^2.
\end{array}
\]
Hence we are done.
\end{proof}
\psp

 {\bf Case 3.}
\begin{equation}
T=\{1,3\},\;\;S=\{2,4\},
\end{equation}
\psp

 In this case, any submodule of ${\cal A}$ is not nilpotent with respect to ${\cal G}_+$. However, we still have the following lemma.
\begin{lem} The submodules
${\cal A}_{\la k\ra}\;(k\in \mathbb{Z})$ are irreducible submodules,
which are not of highest weight type.
\end{lem}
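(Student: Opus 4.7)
I begin by writing out the positive root vectors of $\mathcal{G}=sp(4,\mathbb{C})$ under the representation (2.1) with $T=\{1,3\}$, $S=\{2,4\}$: the long ones are $E_{1,3}=-x_3\partial_{x_1}$ and $E_{2,4}=x_2\partial_{x_4}$, and the short ones are $E_{1,2}-E_{4,3}=\partial_{x_1}\partial_{x_2}+x_3x_4$ and $E_{1,4}+E_{2,3}=\partial_{x_1}\partial_{x_4}-x_2x_3$. If $f\in\mathcal{A}$ is singular, then the long-root equations $E_{1,3}(f)=E_{2,4}(f)=0$ force $\partial_{x_1}(f)=\partial_{x_4}(f)=0$, and substituting into the short-root equations immediately yields $x_3x_4 f=0$, so $f=0$. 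Hence $\mathcal{A}_{\la k\ra}$ contains no singular vector for any $k\in\mathbb{Z}$, and therefore cannot be of highest-weight type. In particular, Lemma~2.3 is inapplicable here, so irreducibility requires a different route.

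For irreducibility, I take a nonzero submodule $N\subset\mathcal{A}_{\la k\ra}$ and any $0\ne f\in N$, aiming to show $N=\mathcal{A}_{\la k\ra}$. Using the grading by total degree $\mathcal{A}_{\la k\ra}=\bigoplus_{m\ge 0}\mathcal{A}_{\la k\ra}^{|k|+2m}$, let $d_-(f)$ denote the smallest degree on which $f$ is nonzero. The four short root vectors have lowering parts $\partial_{x_i}\partial_{x_j}$ with $i\in T$, $j\in S$. A short case analysis on which monomials $x^\alpha$ lie in the joint kernel of all four shows that, if none of these operators lowers $d_-(f)$, then every monomial of $f_{d_-}$ involves only $S$-variables or only $T$-variables; combined with the $\la k\ra$-constraint this forces $d_-=|k|$. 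Iterating the reduction yields some $g\in N$ with $d_-(g)=|k|$.

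The key step, and the main technical hurdle since no singular vector is available to anchor the usual argument, is to isolate the pure minimum-degree component $g_{|k|}$ inside $N$. For this I use the quadratic Casimir $C$ of the $sl_2$-triple $\{E_{1,3},E_{3,1},H_1=E_{1,1}-E_{3,3}\}$. Each of the operators $-x_3\partial_{x_1}$, $-x_1\partial_{x_3}$, $-x_1\partial_{x_1}+x_3\partial_{x_3}$ preserves total degree, so $C\in U(\mathcal{G})$ preserves the grading; and on $\mathcal{A}_{\la k\ra}^{|k|+2m}$ the $(x_1,x_3)$-$sl_2$ is $V_m$-isotypic, so $C$ acts as the scalar $\lambda_m=m(m+2)/2$. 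Writing $g=g_{|k|}+g_{|k|+2}+\cdots+g_{|k|+2L}$, the element $\prod_{m=1}^{L}(C-\lambda_m)\in U(\mathcal{G})$ annihilates every higher component and leaves a nonzero multiple of $g_{|k|}$ in $N$. The complementary $sl_2$ on $(x_2,x_4)$ (or on $(x_1,x_3)$ when $k<0$) then acts irreducibly on $\mathcal{A}_{\la k\ra}^{|k|}$, so sweeping $g_{|k|}$ through this $sl_2$-orbit places all of $\mathcal{A}_{\la k\ra}^{|k|}$ inside $N$.

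I finish by strong induction on $m$. Assuming $\mathcal{A}_{\la k\ra}^{|k|+2m'}\subset N$ for all $m'<m$, take any monomial $h\in\mathcal{A}_{\la k\ra}^{|k|+2(m-1)}$. Applying $E_{2,1}-E_{3,4}=-x_1x_2-\partial_{x_3}\partial_{x_4}$ gives an element of $N$, while the correction $\partial_{x_3}\partial_{x_4}(h)\in\mathcal{A}_{\la k\ra}^{|k|+2(m-2)}$ already lies in $N$ (either by induction when $m\ge 2$, or trivially zero when $m=1$), so $x_1x_2 h\in N$; the three remaining products $x_1x_4h$, $x_2x_3h$, $x_3x_4h$ are obtained in the same way from $E_{3,2}+E_{4,1}$, $E_{1,4}+E_{2,3}$, and $E_{1,2}-E_{4,3}$ respectively. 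Since every monomial of $\mathcal{A}_{\la k\ra}^{|k|+2m}$ factors as $x_ix_j\cdot h$ with $i\in T$, $j\in S$ and $h$ a monomial one degree lower, the induction closes and $N=\mathcal{A}_{\la k\ra}$, establishing irreducibility.
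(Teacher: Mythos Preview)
Your proof is correct and follows a genuinely different route from the paper's. The paper proceeds by explicit computation: for $k\le 0$ it writes every monomial as $c\,E_{4,2}^{\alpha_4}E_{1,3}^{\alpha_3}(E_{2,1}-E_{3,4})^{\alpha_2+\alpha_4}.x_1^{|k|}$, so $\mathcal{A}_{\la k\ra}$ is cyclic on $x_1^{|k|}$; then, given $0\ne f\in N'$, it applies $E_{3,1}=-x_1\partial_{x_3}$ and $E_{2,4}=x_2\partial_{x_4}$ repeatedly to strip off $x_3,x_4$, reducing to $f=\sum_t a_t x_1^{|k|+t}x_2^t$, and uses the ad~hoc word $E_{3,1}^{|k|+m}E_{1,3}^{|k|+m+1}E_{3,1}$ to peel off the top term and induct on $m$ down to $x_1^{|k|}\in N'$. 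You replace all of this with two structural tools: the Casimir of the $(x_1,x_3)$-copy of $sl_2$, whose distinct eigenvalues on the degree layers let you project onto the bottom piece $g_{|k|}$ (one small remark: for $k<0$ the $(x_1,x_3)$-degree on $\mathcal{A}_{\la k\ra}^{|k|+2m}$ is $|k|+m$ rather than $m$, but the Casimir eigenvalues are still pairwise distinct and the argument is unchanged), and then the irreducibility of the complementary $sl_2$ on the bottom layer. Your no-singular-vector computation is also a crisper justification of ``not highest weight'' than the paper's one-line remark about $(E_{2,1}-E_{3,4})|_{\cal A}=-x_1x_2-\partial_{x_3}\partial_{x_4}$. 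The paper's approach is entirely elementary and yields an explicit cyclic vector with a monomial formula; yours is more conceptual and would adapt more readily to the higher-rank cases later in Section~2, where Casimir and degree-filtration arguments can replace bespoke monomial identities.
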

\begin{proof}
Suppose $k\geq 0$. Let $N$ be the submodule generated by $x_1^k$.
Note
$$E_{13}|_{\cal A}=-x_3\partial_{x_1},$$
$$(E_{21}-E_{34})|_{\cal A}=-x_1x_2-\partial_{x_3}\partial_{x_4},$$
$$E_{42}|_{\cal A}=x_4\partial_{x_2}.$$
Indeed, the second operator shows that the submodules ${\cal A}_{\la
k\ra}\;(k\in \mathbb{Z})$ are not of highest weight type.

For $\forall\;k_1,k_2,k_3,k_4(k_1+k_3-k_2-k_4=k)$,
\begin{equation}
x_1^{k_1}x_2^{k_2}x_3^{k_3}x_4^{k_4}=\frac{(-1)^{k_2+k_3+k_4}k_1!k_2!}{(k_1+k_3)!(k_2+k_4)!}E_{42}^{k_4}E_{13}^{k_3}(E_{21}-E_{34})^{k_2+k_4}.x_1^k.
\end{equation}
Therefore, ${\cal A}_{\la -k\ra}=N$.

Suppose that $N'$ is a nonzero submodule of ${\cal A}_{\la-k\ra}$.
We will show that $x_1^k\in N'$, which implies $N'={\cal
A}_{\la-k\ra}$. Assume $0\neq f\in N'$. Since
$$E_{3,1}|_{\cal A}=-x_1\partial_{x_3},\;\;E_{2,4}|_{\cal A}=x_2\partial_{x_4},$$
we can assume $f=f(x_1,x_2)=\sum\limits_{t=0}^ma_tx_1^{k+t}x_2^t$
with $a_m\neq0$. If $m=0$, we are done. We prove by induction on
$m$. Observe
$$(E_{12}-E_{43})(f)=f_1+f_2$$
with $f_1=\sum\limits_{t=1}^m a_t(k+t)tx_1^{k+t-1}x_2^{t-1},\
f_2=\sum\limits_{t=0}^m a_tx_1^{k+t}x_2^tx_3x_4$. Moreover,
$$f_3=E_{3,1}^{k+m}E_{1,3}^{k+m+1}E_{3,1}.(f_1+f_2)=E_{3,1}^{k+m}E_{1,3}^{k+m+1}E_{3,1}.(f_2)=cx_1^{k+m}x_2^mx_3x_4
$$ for some constant $c\neq 0$. Thus we get $x_1^{k+m}x_2^mx_3x_4\in N'$.
Setting
$f'=f_1+f_2-a_mx_1^{k+m}x_2^mx_3x_4=f_1+\sum\limits_{t=0}^{m-1}
a_tx_1^{k+t}x_2^tx_3x_4$ and repeating above process, we get $f_1\in
N'$. Hence $x_1^k\in N'$ by induction.\\
It can be proved similarly when $k<0$.
\end{proof}\psp

Now we go back to the general case $\mathcal{G}=sp(2n,\mathbb{C})$
for any positive integer $n$.
\begin{lem} All singular vectors in ${\cal A}$ should be
$x_n^{k_n},x_{2n}^{k_{2n}}$ and $x_{n-1}x_{2n}-x_nx_{2n-1}$ if
$T=\overline{1,n}$.
\end{lem}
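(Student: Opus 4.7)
The plan is to successively restrict a singular vector $f\in{\cal A}$ by applying the annihilation conditions from each family of positive root vectors. With $T=\ol{1,n}$, the positive root operators act as $E_{i,n+i}|_{\cal A}=\ptl_{x_i}\ptl_{x_{n+i}}$, $(E_{i,n+j}+E_{j,n+i})|_{\cal A}=\ptl_{x_i}\ptl_{x_{n+j}}+\ptl_{x_j}\ptl_{x_{n+i}}$, and $(E_{i,j}-E_{n+j,n+i})|_{\cal A}=-(x_j\ptl_{x_i}+x_{n+j}\ptl_{x_{n+i}})$ for $1\le i<j\le n$.

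First I would reduce the dependence of $f$ to the four variables $x_{n-1}, x_n, x_{2n-1}, x_{2n}$. For each $i\le n-2$ the operators $E_{i,n}-E_{2n,n+i}$ and $E_{i,n-1}-E_{2n-1,n+i}$ yield $x_n\ptl_{x_i}f=-x_{2n}\ptl_{x_{n+i}}f$ and $x_{n-1}\ptl_{x_i}f=-x_{2n-1}\ptl_{x_{n+i}}f$. Since ${\cal A}$ is a UFD in which $x_n$ and $x_{2n}$ are coprime, the first equation forces $\ptl_{x_{n+i}}f=x_n h$ and $\ptl_{x_i}f=-x_{2n}h$ for some polynomial $h$; plugging into the second yields $h(x_{n-1}x_{2n}-x_n x_{2n-1})=0$, hence $h=0$. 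The one remaining ``long root'' condition is $(x_n\ptl_{x_{n-1}}+x_{2n}\ptl_{x_{2n-1}})f=0$, a first-order linear PDE whose characteristic invariants are $x_n$, $x_{2n}$ and $u:=x_{n-1}x_{2n}-x_n x_{2n-1}$, just as in Case~2 above, so $f=g(x_n, x_{2n}, u)$ for some polynomial $g$.

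Next I would apply the remaining second-order conditions. Writing $g=\sum p_{abc}s^a t^b v^c$ with $(s,t,v)=(x_n, x_{2n}, u)$, the identity $\ptl_{x_{n-1}}\ptl_{x_{2n-1}}f=0$ immediately forces $p_{abc}=0$ for $c\ge 2$, so $g=P(s,t)+vQ(s,t)$. Applying $\ptl_{x_n}\ptl_{x_{2n}}f=0$ and $(\ptl_{x_{n-1}}\ptl_{x_{2n}}+\ptl_{x_n}\ptl_{x_{2n-1}})f=0$ and separating terms by degree in $x_{n-1}$ and $x_{2n-1}$ produces the relations $P_{st}=0$, $tQ_{st}+Q_s=0$, $sQ_{st}+Q_t=0$, and $sQ_s=tQ_t$. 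The last equation forces $Q$ to be a polynomial $R(st)$, and substituting back collapses the system to $wR''(w)+2R'(w)=0$, whose only polynomial solutions are constants. Hence $Q$ is a scalar $c$ and $P=P_1(s)+P_2(t)$, giving $f=P_1(x_n)+P_2(x_{2n})+cu$.

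Finally I would enforce $H.f\subset\mbb{C}f$. A direct computation using $E_{i,i}-E_{n+i,n+i}=-x_i\ptl_{x_i}-x_{n+i}\ptl_{x_{n+i}}-1$ shows that both $x_n^k$ and $x_{2n}^k$ carry $H$-weight $(-1,\ldots,-1,-(k+1))$, while $u$ carries weight $(-1,\ldots,-1,-2,-2)$, differing from the former in the $(n-1)$-th coordinate. Therefore $f$ is either a scalar multiple of $u$ or lies in a single weight space $\mbox{Span}\{x_n^k, x_{2n}^k\}$, and in either case $f$ is a linear combination of the three listed families. I expect Step~3 to be the main technical point: the differentiations are taken with respect to the four ``old'' variables $x_{n-1}, x_n, x_{2n-1}, x_{2n}$ while the coefficient functions $P$ and $Q$ depend only on the three algebraically independent ``new'' variables $s, t, v$, so one must carefully separate the resulting polynomial identities into independent conditions before the elementary ODE argument forces $Q$ to be constant.
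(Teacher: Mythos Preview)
Your argument is correct and arrives at the same three--variable reduction $f=g(x_n,x_{2n},u_{n-1})$ followed by the same $sp(4,\mathbb{C})$ analysis (the paper's Case~2), so the overall strategy coincides with the paper's. The only real tactical difference is in the reduction step: the paper performs a global change of variables to $(x_1,\dots,x_n,u_1,\dots,u_{n-1},x_{2n})$ with $u_i=x_ix_{2n}-x_nx_{n+i}$, in which $(E_{i,n}-E_{2n,n+i})$ becomes simply $-x_n\partial_{x_i}$ and $(E_{i,j}-E_{n+j,n+i})$ becomes $-u_j\partial_{u_i}$, so that independence from $x_1,\dots,x_{n-1},u_1,\dots,u_{n-2}$ is immediate; you instead stay in the original coordinates and use the UFD trick with the pair $E_{i,n}-E_{2n,n+i}$, $E_{i,n-1}-E_{2n-1,n+i}$ to force $\partial_{x_i}f=\partial_{x_{n+i}}f=0$ directly. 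Both routes are clean; the paper's coordinate change generalises smoothly to the later $so(2n)$ and $sl(n)$ sections, while your divisibility argument avoids any rational--function bookkeeping. One small redundancy: the relation $sQ_s=tQ_t$ that you extract from $E_{n-1,2n}+E_{n,2n-1}$ already follows from the two equations $tQ_{st}+Q_s=0$ and $sQ_{st}+Q_t=0$ (multiply the first by $s$, the second by $t$, and subtract), so the ODE step can be bypassed exactly as in the paper by observing that $(t\partial_t+1)Q_s=0$ forces $Q_s=0$ for polynomial $Q$.
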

\begin{proof}
Let $g(x_1,\cdots,x_{2n})\in {\cal A}$ be a singular vector. Set
$$u_i=x_ix_{2n}-x_nx_{n+i},\ \ i\in\ol{1,n-1}.$$ We write
$$g=f(x_1,\cdots ,x_n,u_1,\cdots ,u_{n-1},x_{2n}).$$ Since
\begin{equation}
(E_{i,n}-E_{2n,n+i})(f)=-x_n\partial_{x_i}(f)=0,\ \ i\in
\overline{1,n-1}
\end{equation}
equivalently,
\begin{equation}
\partial_{x_i}(f)=0,\ \ i\in\overline{1,n-1}.
\end{equation} Moreover,
\begin{equation}
(E_{ij}-E_{n+j,n+i})(f)=(-x_jx_{2n}+x_{n+j}x_n)\partial_{u_i}(f)=0,\
\ {\rm for}\ \ 1\leq i<j<n,
\end{equation}
i.e.
\begin{equation}
\partial_{u_i}(f)=0\ \ i\in \overline {1,n-2}.
\end{equation}
Therefore, $f$ is independent of $x_1,\cdots ,x_{n-1},u_1,\cdots
,u_{n-2}$. Now according to the representation of $sp(4,\mbb{C})$
(Case 2), all singular vectors in ${\cal A}$ should be
$x_n^{k_n},x_{2n}^{k_{2n}}$ and $u_{n-1}$.
\end{proof}
\begin{thm}
If $S_1\bigcup T_1\neq\emptyset$ (cf. (2.6)), then ${\cal A}_{\la
k\ra}\;(k\in\mathbb{Z})$ are irreducible.\\
a) When $S_1\neq\emptyset$ and $ T_1\neq\emptyset$, the irreducible
modules ${\cal A}_{\la k\ra}\;(k\in\mathbb{Z})$ are not of highest
weight type.\\
b) When $S_1=\emptyset$ or $T_1=\emptyset$, we can assume
$T=\overline{1,t}$ for some $t\in \overline{1,n-1}$, the subspace
${\cal A}_{\la k\ra}$ is an irreducible highest weight module with
the highest weight vector $x_{t+1}^k(k\geq0)$ or $x_t^{-k}(k<0)$,
the corresponding highest weight is
 $-(k+1)\lambda_t+k\lambda_{t+1}$($k\geq0$) or
 $-k\lambda_{t-1}+(k-1)\lambda_t$($k<0$).

If $S_1\bigcup T_1=\emptyset$, we can assume
$\overline{1,n}=T_2$ by symmetry. In this case, we have that\\
e) ${\cal A}_{\la k\ra}$(resp. ${\cal A}_{\la -k\ra}$) is an
irreducible highest weight module with highest weight
$k\lambda_{n-1}-(k+1)\lambda_n$, a corresponding highest weight vector is $x_{2n}^k$(resp. $x_n^{-k}$) when $k>0$;\\
f) ${\cal A}_{\la 0\ra}=U(\mathcal{G}).1\bigoplus
U(\mathcal{G}).(x_{n-1}x_{2n}-x_nx_{2n-1})$. Both $U({\cal G}).1$
and $U({\cal G}).(x_{n-1}x_{2n}-x_nx_{2n-1})$ are irreducible
highest weight modules with the highest weight $-\lambda_n$ and
$-2\lambda_n$, respectively. Moreover, $U(\mathcal{G}).1$ has a
basis
\begin{equation}
\begin{array}{l}
\{\sum\limits_{r_1+\cdots+r_n=\frac{1}{2}(k_1+\cdots+k_n)}(r_1+\cdots+r_n)!\prod\limits_{t=1}^n{
k_t\choose r_t}x_t^{k_t-r_t}x_{n+t}^{r_t}\prod\limits_{1\leq i<j\leq
n}(x_ix_{n+j}-x_jx_{n+i})^{k_{i,j}} \\
|k_t, k_{i,j}\in \mathbb{N};\; \sum\limits_{t=1}^n k_t,\sum
k_{i,j}\in 2\mathbb{N}; \; k_{i,j}k_t=0\; \mbox{\it for}\; i<j<t;\;
k_{i,j}k_{i_1,j_1}=0\;
 \mbox{\it for} \\ i<i_1 and \; j>j_1\},
\end{array}
\end{equation}
while $U(\mathcal{G}).(x_{n-1}x_{2n}-x_nx_{2n-1})$ has a basis
\begin{equation}
\begin{array}{l}
\{\sum\limits_{r_1+\cdots+r_n=\frac{1}{2}(k_1+\cdots+k_n)}(r_1+\cdots+r_n)!\prod\limits_{t=1}^n{
k_t\choose r_t} x_t^{k_t-r_t}x_{n+t}^{r_t}\prod\limits_{1\leq
i<j\leq
n}(x_ix_{n+j}-x_jx_{n+i})^{k_{i,j}} \\
|k_t, k_{i,j}\in \mathbb{N};\; \sum\limits_{t=1}^n
k_t\in2\mathbb{N},\;\sum k_{i,j}\notin2\mathbb{N}; \; k_{i,j}k_t=0\;
\mbox{\it for}\; i<j<t;\; k_{i,j}k_{i_1,j_1}=0\; \mbox{\it for} \\
i<i_1\; and \; j>j_1\}.
\end{array}
\end{equation}
\end{thm}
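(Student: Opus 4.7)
The plan is to unify all parts of Theorem~2.10 through a three-step recipe: first classify the singular vectors in ${\cal A}$ under the noncanonical action, then decide whether the representation is locally nilpotent on ${\cal G}_+|_{\cal A}$, and finally apply Lemma~2.3 in the nilpotent cases together with a direct generation/absorption argument modeled on Lemma~2.8 in the non-nilpotent case. The combinatorial partition $S_1,T_1,S_2,T_2$ from (2.6) records exactly which positive root vectors act as multiplication and which act as differentiation, so this dichotomy is what controls ${\cal G}_+$-nilpotence. In particular, the three sample situations at $n=2$ (Cases 1, 2, 3) realize the three qualitatively distinct scenarios (b), (e)--(f), and (a) respectively, and the general argument proceeds by reduction to them.

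For the singular-vector classification I would argue as in Lemma~2.9, applying the operators $E_{i,j}-E_{n+j,n+i}$ and $E_{i,n+i}$ in the order prescribed by a reduced expression for the longest element. Each equation $(E_{i,j}-E_{n+j,n+i})(f)=0$ either kills a variable outright or forces $f$ to depend only on the $sp(2)$-invariant combinations $u_{i,j}=x_ix_{n+j}-x_jx_{n+i}$ with both indices in $T_1$ (respectively $S_1$), these being the only quadratic invariants available. Successive substitutions collapse the problem to the low-rank Cases~1--3 already handled, so the singular vectors come out to be precisely the boundary monomials $x_i^k$, augmented by $1$ and one quadratic invariant in the $S_1=T_1=\emptyset$ situation.

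Given this classification, part~(b) is immediate: the vanishing of $S_1$ or of $T_1$ rules out any $u_{i,j}$, so every positive root vector is locally nilpotent and ${\cal G}_+.{\cal A}_i\subset\sum_{r\leq i}{\cal A}_r$, and Theorem~3.3 of~[4] then forces ${\cal A}$ to be nilpotent with respect to ${\cal G}_+$. The unique singular vector $v_k\in{\cal A}_{\la k\ra}$ satisfies $(v_k|v_k)\neq 0$ by (2.2), so Lemma~2.3 yields irreducibility, with the weight read off from the $H$-action. Part~(a) is the subtle case: when both $S_1$ and $T_1$ are nonempty, some $E_{i,n+i}$ with $i\in T_1$ acts as multiplication by $-x_ix_{n+i}$, which is manifestly not locally nilpotent, ruling out a highest weight vector. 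Here I would follow Lemma~2.8 closely: designate an explicit generator (an appropriate power $x_i^k$ in the boundary index), exhibit a closed-form analogue of~(2.12) writing every monomial as $U({\cal G})$ applied to that generator, and then show by induction on total degree that an arbitrary nonzero submodule must contain the generator, using a multiplication-type operator to strip each inhomogeneous correction and a $\partial\partial$-type operator from $T_2$ or $S_2$ to retract.

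Parts~(e) and~(f) extend Case~2 to general $n$ with $T=\overline{1,n}$. Lemma~2.9 already lists the only singular vectors as $x_n^k$, $x_{2n}^k$, $1$ and $x_{n-1}x_{2n}-x_nx_{2n-1}$, which delivers the highest weights in~(e) at once and, together with Lemma~2.3 and the obvious nonvanishing of $(v|v)$ for each of $1$ and the quadratic invariant, also gives the direct sum in~(f). The main obstacle, and the place where I expect the heaviest bookkeeping, is the explicit basis. Spanning I would prove inductively on bi-degree by using $-(E_{n+i,j}+E_{n+j,i})=x_ix_{n+j}+x_jx_{n+i}$ to produce each such product as $2u_{i,j}$ plus strictly lower straightened terms, exactly as in the recursion $(x_1x_4+x_2x_3)(\cdots)=(x_1^tx_4^t+x_2^tx_3^t)+x_1x_3x_2x_4(\cdots)$ at the start of Lemma~2.7. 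The side conditions $k_{i,j}k_t=0$ for $i<j<t$ and $k_{i,j}k_{i_1,j_1}=0$ for $i<i_1,\ j>j_1$ are precisely the symplectic Pl\"ucker-type straightening relations that put each monomial into a unique normal form, and linear independence is then forced by a dimension count in each homogeneous piece ${\cal A}_{\la 0\ra}\cap {\cal A}_{2m}$, paralleling the final display of Lemma~2.7. Verifying that this count saturates $\dim({\cal A}_{\la 0\ra}\cap{\cal A}_{2m})$ is the hardest part of the argument; everything else is a formal application of Lemmas~2.3, 2.8, and~2.9.
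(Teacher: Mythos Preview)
Your plan for parts (a), (b), and (e) is essentially the paper's own argument: the paper splits into the subcases $\overline{1,n}=S_1\cup T_1$, $\overline{1,n}=S_1\cup T_2$, $\overline{1,n}=S_1\cup T_1\cup T_2$, and $\overline{1,n}=T_2$, and in each one either runs the generation/absorption induction you describe (modeled on Lemma~2.8) or verifies ${\cal G}_+$-nilpotence and invokes Lemma~2.3 after the singular-vector count of Lemma~2.9. So for those parts your outline is correct and coincides with the paper.

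The real divergence, and a genuine gap, is in part (f). The paper does \emph{not} prove the bases (2.17)--(2.18) by a direct generalization of Lemma~2.7 with straightening and a dimension count; it defers this to Section~5 and uses a completely different mechanism. There one embeds $sl(n,\mbb{C})\hookrightarrow sp(2n,\mbb{C})$ by $E_{i,j}\mapsto E_{i,j}-E_{n+j,n+i}$ (identifying $y_i$ with $x_{n+i}$), so that the $sl(n)$-intertwiner $\eta=\sum_i x_{n+i}\partial_{x_i}$ gives a decomposition
\[
{\cal A}_{\la 0\ra}=\bigoplus_{l,m\in\mbb{N}}\eta^l\,{\cal H}_{-m-2l,m},
\]
each summand being an irreducible $sl(n)$-module with an explicit basis already supplied by Theorem~5.2 (equivalently Theorem~3.3). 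A short parity induction, using that $(x_{n-1}x_{2n}-x_nx_{2n-1})^2=-(E_{2n-1,n}+E_{2n,n-1})^2-4E_{2n-1,n-1}E_{2n,n}$ is in $U(sp(2n))$, places each $\eta^l{\cal H}_{-m-2l,m}$ inside $U({\cal G}).1$ or $U({\cal G}).(x_{n-1}x_{2n}-x_nx_{2n-1})$ according to the parity of $m$. The sets (2.17)--(2.18) are exactly the union of the $\eta^l$-images of the known $sl(n)$-bases, and the side conditions $k_{i,j}k_t=0$, $k_{i,j}k_{i_1,j_1}=0$ are inherited from Theorem~3.3, not derived anew.

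Your straightening scheme does not produce these objects. The factor
\[
\sum_{r_1+\cdots+r_n=\frac12\sum k_t}\Bigl(\sum_i r_i\Bigr)!\prod_t\binom{k_t}{r_t}x_t^{k_t-r_t}x_{n+t}^{r_t}
\]
in (2.17) is precisely $\eta^l\bigl(\prod_t x_t^{k_t}\bigr)$ with $l=\tfrac12\sum k_t$; it is not a monomial in the $u_{i,j}$ and the $x_t$, and it does not arise from repeatedly applying $-(E_{n+i,j}+E_{n+j,i})=x_ix_{n+j}+x_jx_{n+i}$ to lower terms. Without the $sl(n)$ decomposition you have no clear inductive way to show these particular sums lie in $U({\cal G}).1$, and the dimension count you allude to---matching the cardinality of the constrained index set against $\dim({\cal A}_{\la 0\ra}\cap{\cal A}_{2m})$ for general $n$---is not the $(m+1)^2$ triviality of Lemma~2.7 but a genuinely hard identity that the paper sidesteps entirely by importing linear independence from the $sl(n)$ side. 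In short, the ``hardest part'' you flag is not merely bookkeeping; it is the place where the paper substitutes a structural argument (the $sl(n)$-filtration by $\eta$) for the combinatorics you propose.
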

\begin{proof}
We will prove it case by case.\\

1) If $\overline {1,n}=S_1\bigcup T_1$, then ${\cal A}_{\la k\ra}$
is
irreducible.\\

Suppose $k\geq0$. Take $s\in S_1$ and let $N$ be the submodule of
${\cal A}_{\la k\ra}$ generated by $x_s^k$. Note
\[
(E_{s,j}-E_{n+j,n+s})|_{\cal
A}=-x_sx_j-\partial_{x_{n+j}}\partial_{x_{n+s}},\ \ {\rm for}\ \
j\in T_1,
\]
$$E_{i,s}-E_{n+s,n+i}|_{\cal A}=x_i\partial_{x_s}-x_{n+s}\partial_{x_{n+i}},\ \ {\rm for}\ \ i\in S_1,i\neq s,$$
$$E_{n+i,i}|_{\cal A}=x_{n+i}\partial_{x_i},\ {\rm if}\ \ i\in
S_1,$$
$$E_{j,n+j}|_{\cal A}=-x_{n+j}\partial_{x_j},\ \ {\rm if}\ \
j\in T_1.$$  Hence ${\cal A}$ is not nilpotent with respect to
${\cal G}_+$. For all $\alpha\in \mathbb{N}^{2n}$ such that
$\sum\limits_{i\in S}\alpha_i-\sum\limits_{j\in T}\alpha_j=k$,
\[
x^\alpha=cE_{n+s,s}^{\alpha_{n+s}}\prod\limits_{i\in S_1,i\neq
s}E_{n+i,i}^{\alpha_{n+i}}(E_{i,s}-E_{n+s,n+i})^{\alpha_i+\alpha_{n+i}}\prod\limits_{j\in
T_1}(-E_{n+j,j})^{\alpha_{n+j}}(E_{s,j}-E_{n+j,n+s})^{\alpha_j+\alpha_{n+j}}.x_s^k
\]
for some $c\in \mathbb{C}$.  Therefore, $x^\alpha\in N$. Hence
$N={\cal A}_{\la k\ra}$.

 Suppose $N'$ is a nonzero submodule of
${\cal A}_{\la k\ra}$ and $0\neq f\in N'$. Since
$$E_{i,n+i}|_{\cal A}=x_i\partial_{x_{n+i}}\ {\rm if}\ \ i\in
S_1,$$
 and
 $$E_{n+j,j}|_{\cal A}=-x_j\partial_{x_{n+j}}\ \ {\rm if}\ \
j\in T_1,$$ we can assume $f$ is independent of $x_{n+1},\cdots
,x_{2n}$. Take $t\in T_1$. Observe
$$(E_{s,i}-E_{n+i,n+s})(f)=x_s\partial_{x_i}(f),\ \ i\in S_1,i\neq s$$
$$(E_{j,t}-E_{n+t,n+j})(f)=-x_t\partial_{x_j}(f),\ \ j\in T_1,j\neq t.$$
So we can further assume $f=f(x_s,x_t)$. By the same argument as
$sp(4,\mbb{C})$ (Case 3), we get $x_s^k\in N'$, which implies ${\cal
A}_{\la k\ra}=N'$.

It is similar for $k<0$.\\

2) If $\overline{1,n}=S_1\bigcup T_2$, then ${\cal A}_{\la k\ra}$ is
irreducible.\\

We assume $T_2=\overline{1,t}$ for some $t\in \overline{1,n-1}$. Any
singular vector in ${\cal A}$ must be a polynomial function in
$x_t,x_{t+1},x_{n+t}$ (and $u=x_{t-1}x_{n+t}-x_tx_{n+t-1}$ if
$t>1$). Suppose $f(x_t,x_{n+t},x_{t+1},u)$ is a singular vector.
Since
\[
(E_{t-1,n+t+1}+E_{t+1,n+t-1})(f)=-x_tx_{t+1}\partial_u(f)=0,
\]
$f$ is independent of $u$. By the same argument as in
$sp(4,\mbb{C})$ (Case 1), we get that  all singular vectors should
be $x_t^{k_t},x_{t+1}^{k_{t+1}}$. Since
\[
E_{i,j}-E_{n+j,n+i}\mid_{\cal A}=\left\{
\begin{array}{lll}
-x_j\partial_{x_i}-x_{n+j}\partial_{x_{n+i}} &{\rm if}& 1\leq
i<j\leq t,\\
\partial_{x_i}\partial_{x_j}-x_{n+j}\partial_{x_{n+i}}  &{\rm if}&
1\leq i\leq t<j\leq n,\\
x_i\partial_{x_j}-x_{n+j}\partial_{x_{n+i}}  &{\rm if} &t<i<j\leq n,
\end{array}
\right.
\]
\[
E_{i,n+j}+E_{j,n+i}\mid_{\cal A}=\left\{
\begin{array}{lll}
\partial_{x_i}\partial_{x_{n+j}}+\partial_{x_j}\partial_{x_{n+i}}
&{\rm if}& 1\leq i,j\leq t,\\
\partial_{x_i}\partial_{x_{n+j}}+x_j\partial_{x_{n+i}} &{\rm if}&
1\leq i\leq t<j\leq n,\\
x_i\partial_{x_{n+j}}+x_j\partial_{x_{n+i}} &{\rm if}& t<i,j\leq n,
\end{array}
\right.
\]
and
\[
E_{i,n+i}\mid_{\cal A}=\left\{
\begin{array}{lll}
\ptl_{x_i}\ptl_{x_{n+i}} &{\rm if}&1\leq i\leq t,\\
x_i\ptl_{x_{n+i}} &{\rm if}&t<i\leq n,
\end{array}
\right.
\]
${\cal A}$ is nilpotent with respect to ${\cal G}_+$. Therefore,
${\cal A}_{\la k\ra}$ is an irreducible highest weight module with
the highest weight vector
$x_{t+1}^k(k\geq0)$ or $x_t^{-k}(k<0)$, the corresponding highest weight is
 $-(k+1)\lambda_t+k\lambda_{t+1}$($k\geq0$) or $-k\lambda_{t-1}+(k-1)\lambda_t$($k<0$).\\

3) If $\overline{1,n}=S_1\bigcup T_1\bigcup T_2$, then ${\cal
A}_{\la k\ra}$ is
irreducible.\\

In this case, ${\cal A}$ does not contain a singular vector. Take
$s\in S_1$ and $t\in T_1$. Let $N$ be the submodule generated by
$x_s^k$ ( if $k\geq0$; $x_t^{-k}$ if $k<0$). $N={\cal A}_{\la k\ra}$
by 1) and 2). Now suppose $N'$ is a nonzero submodule of ${\cal
A}_{\la k\ra}$ and $0\neq f\in N'$. By the same argument as 1), we
can assume that $f$ is a polynomial function in
$x_s,x_t,x_i,x_{n+i},i\in T_2$. Since
\begin{displaymath}
(E_{i,t}-E_{n+t,n+i})(f)=-x_t\partial_{x_i}(f),i\in T_2
\end{displaymath}
and
\begin{displaymath}
(E_{s,n+i}+E_{i,n+s})(f)=x_s\partial_{x_{n+i}}(f),i\in T_2,
\end{displaymath}
we can get an $f'=f'(x_s,x_t)\in N'$. Now $N'={\cal A}_{\la k\ra}$
by the same
argument as in 1).\\

4) $\overline{1,n}=T_2$.\\

In this case, ${A}_{\la k\ra}$ ($k\neq0$) has only one singular
vector ($x_{2n}^k$ if $k>0$ and $x_n^{-k}$ if $k<0$) by Lemma 2.9,
hence irreducible by Lemma 2.3.\\

For technical convenience, we will prove b) in Section 5.

\end{proof}

\section{Nocanonical Representation of  $so(2n,\mathbb{C})$}
\setcounter{equation}{0} \setcounter{atheorem}{0}

In this section, we investigate some noncanonical representations of
the special orthogonal Lie algebra $so(2n,\mathbb{C})$ defined via
(2.1). We  decompose ${\cal A}=\mbb{C}[x_1,....,x_{2n}]$ into a
direct sum of irreducible submodules and give a basis for such
submodules.

Denote by $\mathcal{G}$ the Lie algebra
\begin{equation}
so(2n,\mathbb{C})=\sum\limits_{i,j=1}^n\mathbb{C}(E_{i,j}-E_{n+j,n+i})+\sum\limits_{1\leq
i<j\leq
n}[\mathbb{C}(E_{i,n+j}-E_{j,n+i})+\mathbb{C}(E_{n+j,i}-E_{n+i,j})],
\end{equation}
which acts on ${\cal A}$ via (2.1). Let
\[
H=\sum\limits_{i=1}^n\mbb{C}(E_{i,i}-E_{n+i,n+i})
\]
be a Cartan subalgebra and take
$\{E_{i,j}-E_{n+j,n+i},E_{i,n+j}-E_{j,n+i}\mid 1\leq i<j\leq n\}$ as
positive root vectors, which span a Lie subalgebra ${\cal G}_+$.
Denote
\[
h_i=E_{i,i}-E_{n+i,n+i}-E_{i+1,i+1}+E_{n+i+1,n+i+1}\ \ {\rm for}\ \
i\in \overline{1,n-1},
\]
\[
h_n=E_{n-1,n-1}-E_{2n-1,2n-1}+E_{n,n}-E_{2n,2n},
\]
the fundamental weights $\lambda_1,\cdots,\lambda_n$ are linear
functions on $H$ such that $\lambda_i(h_j)=\delta_{i,j}$.
 Recall
\begin{equation}
\begin{array}{c}
S_1=\{i\in \overline{1,n}\mid i\in S,\; n+i\in S\},\\
S_2=\{i\in \overline{1,n}\mid i\in S,\; n+i\in T\},\\
T_1=\{i\in \overline{1,n}\mid i\in T,\; n+i\in T\},\\
T_2=\{i\in \overline{1,n}\mid i\in T,\; n+i\in S\},
\end{array}
\end{equation}
and \[ {A}_{\la k\ra}=\mbox{Span}\:\{x^\alpha|\sum\limits_{i\in
S}\alpha_i-\sum\limits_{i\in T}\alpha_i=k\}.
\]
Set
\begin{equation}
\Delta=\sum\limits_{i\in S_1}\partial_{x_i}\partial_{x_{n+i}}+
\sum\limits_{i\in T_1}x_ix_{n+i}-\sum\limits_{i\in
S_2}x_{n+i}\partial_{x_i}-\sum\limits_{i\in
T_2}x_i\partial_{x_{n+i}},
\end{equation}
\begin{equation}
\eta=\sum\limits_{i\in S_1}x_ix_{n+i}+\sum\limits_{i\in
T_1}\partial_{x_i}\partial_{x_{n+i}}+\sum\limits_{i\in
S_2}x_i\partial_{x_{n+i}} +\sum\limits_{i\in
T_2}x_{n+i}\partial_{x_i}.
\end{equation}
Then we have
\begin{equation}
\Delta g=g \Delta \ \ {\rm and}\ \ g\eta=\eta g \ \ {\rm for}\ \
g\in so(2n,\mathbb{C}),
\end{equation}
as operators on ${\cal A}$ and
\begin{equation}
\Delta {A}_{\la k\ra}\subset {\cal A}_{\la k-2\ra},\ \ \eta {\cal
A}_{\la k\ra}\subset {\cal A}_{\la k+2\ra}\ \ {\rm for} \ \ k\in
\mathbb{Z}.
\end{equation}
Denote
\begin{equation}
{\cal H}_{\la k\ra}=\{f\in {\cal A}_{\la k\ra}|\Delta (f)=0\}.
\end{equation}

If $\overline{1,n}=S_1$, we get the canonical polynomial
representation of $so(2n,\mathbb{C})$. Before we start to study the
noncanonical polynomial representations, we first quote a useful
lemma found by Xu [8].
\begin{lem}
Suppose ${\cal A}$ is a free module over a subalgebra $B$ generated
by a filtrated subspace $V=\bigcup_{r=0}^\infty V_r$ (i.e.,
$V_r\subset V_{r+1}$). Let ${\cal T}_1$ be a linear operator on
${\cal A}$ with a right inverse ${\cal T}_1^-$ such that
\begin{equation}
{\cal T}_1(B),{\cal T}_1^-(B)\subset B,\ \ {\cal
T}_1(\eta_1\eta_2)={\cal T}_1(\eta_1)\eta_2,\ \ {\cal
T}_1^-(\eta_1\eta_2)={\cal T}_1^-(\eta_1)\eta_2
\end{equation}
for $\eta_1\in B$, $\eta_2\in V$, and let ${\cal T}_2$ be a linear
operator on ${\cal A}$ such that
\begin{equation}
{\cal T}_2(V_{r+1})\subset BV_r,\ \ {\cal T}_2(f\zeta)=f{\cal
T}_2(\zeta)\ \ for\ \ 0\leq r\in\mathbb{Z},\ \ f\in B,\ \ \zeta\in
{\cal A}.
\end{equation}
Then we have
\begin{equation}
\begin{array}{ll}
&\{g\in {\cal A}\mid ({\cal T}_1+{\cal T}_2)(g)=0\}\\
=&\mbox{\it Span}\:\{\sum\limits_{i=0}^\infty(-{\cal T}_1^-{\cal
T}_2)^i(hg)|g\in V,h\in B;{\cal T}_1(h)=0\},
\end{array}
\end{equation}
where the summation is finite under our assumption.
\end{lem}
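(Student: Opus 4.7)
The plan is to prove the two inclusions $\supseteq$ and $\subseteq$ separately, with the core work done by an induction along the filtration of $V$.

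For the inclusion $\supseteq$ (every element of the Span lies in $\ker({\cal T}_1+{\cal T}_2)$), I would fix $g\in V$ and $h\in B$ with ${\cal T}_1(h)=0$, set $u_i=(-{\cal T}_1^-{\cal T}_2)^i(hg)$, and compute $({\cal T}_1+{\cal T}_2)\sum_i u_i$. The multiplicative hypothesis on ${\cal T}_1$ gives ${\cal T}_1(u_0)={\cal T}_1(h)\cdot g=0$, while the right-inverse property gives ${\cal T}_1(u_{i+1})={\cal T}_1(-{\cal T}_1^-{\cal T}_2(u_i))=-{\cal T}_2(u_i)$. A telescoping then produces $\sum_{i\geq 0}{\cal T}_1(u_i)+\sum_{i\geq 0}{\cal T}_2(u_i)=0$, as required. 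For finiteness of the summation, I would observe that ${\cal T}_2$ is $B$-linear and sends $V_{r+1}$ into $BV_r$, so it strictly lowers the filtration index, while ${\cal T}_1^-$ preserves the $V$-factor by the multiplicative hypothesis. Hence $-{\cal T}_1^-{\cal T}_2$ sends $BV_r$ into $BV_{r-1}$, and if $g\in V_r$ then $(-{\cal T}_1^-{\cal T}_2)^{r+1}(hg)=0$.

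For the harder inclusion $\subseteq$, I would fix a $B$-basis $\{v_\alpha\}$ of ${\cal A}$ consisting of elements of $V$ and compatible with the filtration, so each $v_\alpha$ has a well-defined minimal index $d_\alpha$ with $v_\alpha\in V_{d_\alpha}$, and induct on $R:=\max\{d_\alpha\mid h_\alpha\neq 0\}$ for $g=\sum_\alpha h_\alpha v_\alpha$ in the kernel. Since ${\cal T}_2$ strictly lowers the filtration, the degree-$R$ part of $({\cal T}_1+{\cal T}_2)(g)$ equals $\sum_{d_\alpha=R}{\cal T}_1(h_\alpha)v_\alpha$, which must vanish; using ${\cal T}_1(B)\subset B$ and the $B$-independence of $\{v_\alpha\}$, this forces ${\cal T}_1(h_\alpha)=0$ for every top-degree $\alpha$. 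Then $w:=\sum_{d_\alpha=R}\sum_i(-{\cal T}_1^-{\cal T}_2)^i(h_\alpha v_\alpha)$ lies in the Span, coincides with $g$ in its top-degree part, and $g-w$ still lies in the kernel but has strictly lower filtration degree, closing the induction.

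The main obstacle I anticipate is the bookkeeping around the filtration in the induction step, specifically verifying that $g-w$ lies in $BV_{R-1}$ and that the top part of $({\cal T}_1+{\cal T}_2)(g)$ cleanly isolates the ${\cal T}_1(h_\alpha)$ coefficients. Both rely crucially on the freeness of ${\cal A}$ as a $B$-module with a filtration-compatible basis, on the multiplicative hypotheses on ${\cal T}_1$ and ${\cal T}_1^-$ (which keep the $V$-factor intact when these operators are applied), and on the strict filtration-lowering of ${\cal T}_2$; once these ingredients are marshalled, the remainder reduces to the telescoping computation above.
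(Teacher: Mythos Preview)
The paper does not actually prove this lemma; it is quoted verbatim from Xu~[8] and used as a black box throughout Sections~3--5. So there is no in-paper proof to compare your proposal against.

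That said, your argument is correct and is the natural one. The $\supseteq$ direction is exactly the telescoping you describe: with $u_{i}=(-{\cal T}_{1}^{-}{\cal T}_{2})^{i}(hg)$ one has ${\cal T}_{1}(u_{0})={\cal T}_{1}(h)\,g=0$ and ${\cal T}_{1}(u_{i+1})=-{\cal T}_{1}{\cal T}_{1}^{-}{\cal T}_{2}(u_{i})=-{\cal T}_{2}(u_{i})$, whence $\sum_{i}({\cal T}_{1}+{\cal T}_{2})(u_{i})=0$. The finiteness follows because the multiplicative hypotheses give ${\cal T}_{1}^{-}(BV_{r})\subset BV_{r}$ and ${\cal T}_{2}(BV_{r})\subset BV_{r-1}$, so $-{\cal T}_{1}^{-}{\cal T}_{2}$ strictly lowers the filtration index. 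For the $\subseteq$ direction your induction on the top filtration degree $R$ is the right mechanism; the only point worth making explicit is that you are implicitly assuming the $B$-basis $\{v_{\alpha}\}$ can be chosen so that $\{v_{\alpha}:d_{\alpha}\le r\}$ is a $B$-basis of $BV_{r}$ for every $r$ (this is what ``compatible with the filtration'' must mean for the argument to isolate the coefficients ${\cal T}_{1}(h_{\alpha})$ cleanly). In the concrete applications of the paper --- where $B$ is a polynomial subring, $V$ is spanned by monomials in the remaining variables, and the filtration is by degree --- this is automatic, so the argument goes through without further work.
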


Now we assume $\overline{1,n}\neq S_1$ and investigate noncanonical
polynomial representations  of $so(2n,\mathbb{C})$ via (2.1)
according to the following cases.\\

{\bf Case 1.} $\overline{1,n}=T_2$\\

In this case, $\Delta=-\sum\limits_{i=1}^nx_i\partial_{x_{n+i}}$ and
$\eta=\sum\limits_{i=1}^nx_{n+i}\partial_{x_i}$.

\begin{lem}
All singular vectors in ${\cal A}$ are
$x_n^{k_n}x_{2n}^{k_{2n}}\;(k_n,k_{2n}\in \mathbb{N})$.
\end{lem}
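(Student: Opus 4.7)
The plan is to peel off the conditions $\mathcal{G}_+f=0$ one family of positive root vectors at a time, following the same strategy used in the proof of Lemma~2.9 for the symplectic case. Under the hypothesis $\ol{1,n}=T_2$, the relevant positive root vectors act on ${\cal A}$ as
\[
(E_{i,j}-E_{n+j,n+i})|_{\cal A}=-x_j\ptl_{x_i}-x_{n+j}\ptl_{x_{n+i}},\qquad
(E_{i,n+j}-E_{j,n+i})|_{\cal A}=\ptl_{x_i}\ptl_{x_{n+j}}-\ptl_{x_j}\ptl_{x_{n+i}},
\]
for $1\leq i<j\leq n$. Introduce the minors $u_{i,j}:=x_ix_{n+j}-x_jx_{n+i}$ and fix a singular vector~$f$.

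First I replace the old coordinates $(x_{n+1},\ldots,x_{2n-1})$ by the minors $(u_{1,n},\ldots,u_{n-1,n})$, keeping $x_1,\ldots,x_n,x_{2n}$. A direct chain-rule computation shows that in these new coordinates the two-term operator $-x_n\ptl_{x_i}-x_{2n}\ptl_{x_{n+i}}$ collapses to $-x_n\ptl_{x_i}$, so the conditions $(E_{i,n}-E_{2n,n+i})(f)=0$ for $i\in\ol{1,n-1}$ say exactly that $f$ is independent of $x_1,\ldots,x_{n-1}$; hence $f=f(x_n,u_{1,n},\ldots,u_{n-1,n},x_{2n})$. Applying the same chain-rule reduction to the remaining first-type operators, for $1\leq i<j<n$ one obtains
\[
(E_{i,j}-E_{n+j,n+i})(f)\;=\;-u_{j,n}\,\ptl_{u_{i,n}}(f),
\]
and since $u_{j,n}\neq 0$ this forces $\ptl_{u_{i,n}}f=0$ for every $i\in\ol{1,n-2}$. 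After these two reductions we therefore have $f\in\mbb{C}[x_n,x_{2n},u_{n-1,n}]$.

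In the final step, only the second-type operator $B:=\ptl_{x_{n-1}}\ptl_{x_{2n}}-\ptl_{x_n}\ptl_{x_{2n-1}}$ can act nontrivially on this reduced algebra, since for every other $(i,j)$ with $i<j\leq n$ each of the two terms of $\ptl_{x_i}\ptl_{x_{n+j}}-\ptl_{x_j}\ptl_{x_{n+i}}$ contains a derivative in a variable absent from $\{x_{n-1},x_n,x_{2n-1},x_{2n}\}$. A routine Leibniz calculation, similar in spirit to the one appearing in the proof of Lemma~2.7, yields
\[
B\bigl(x_n^a x_{2n}^b u_{n-1,n}^c\bigr)=c(a+b+c+1)\,x_n^a x_{2n}^b u_{n-1,n}^{c-1}.
\]
Writing $f=\sum d_{a,b,c}\,x_n^a x_{2n}^b u_{n-1,n}^c$, the equation $B(f)=0$ kills every term with $c\geq 1$, giving $f\in\mbb{C}[x_n,x_{2n}]$, which is the desired conclusion. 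I expect the main bookkeeping obstacle to be Step~1: verifying carefully that the chain-rule collapse is correct and that the change of variables $(x_{n+i})\leftrightarrow(u_{i,n})$ is compatible with staying inside the polynomial ring. Once this is in place, Steps~2 and~3 are short mechanical computations.
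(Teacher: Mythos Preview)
Your proof is correct and follows essentially the same path as the paper: the paper also introduces $u_i=x_ix_{2n}-x_nx_{n+i}$ (your $u_{i,n}$), uses $(E_{i,n}-E_{2n,n+i})$ to kill the $x_i$-dependence, then $(E_{i,j}-E_{n+j,n+i})$ for $j<n$ to kill $u_1,\ldots,u_{n-2}$, and finally $(E_{n-1,2n}-E_{n,2n-1})$ (your $B$) to kill $u_{n-1}$. The only cosmetic difference is that the paper expresses $B$ in the new variables as the composite operator $(x_n\ptl_{x_n}+x_{2n}\ptl_{x_{2n}}+u_{n-1}\ptl_{u_{n-1}}+2)\ptl_{u_{n-1}}$, which on $x_n^a x_{2n}^b u_{n-1}^{c}$ gives exactly your $c(a+b+c+1)x_n^a x_{2n}^b u_{n-1}^{c-1}$.

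Regarding your worry about Step~1: the substitution $(x_{n+1},\ldots,x_{2n-1})\to(u_{1,n},\ldots,u_{n-1,n})$ is only a polynomial automorphism after inverting $x_n$, so strictly speaking you should either work in $\mbb{C}[x_1,\ldots,x_{2n}][x_n^{-1}]$ or note that a weight vector annihilated by the operators you use is automatically polynomial in the $u_{i,n}$; the paper glosses over this in the same way, simply writing ``Rewrite $g=f(x_1,\ldots,x_n,u_1,\ldots,u_{n-1},x_{2n})$''. Either fix is routine.
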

\begin{proof}
Let $g\in {\cal A}$ be a singular vector. Set
$u_i=x_ix_{2n}-x_nx_{n+i}$ for $i\in \overline{1,n-1}$. Rewrite
$$g=f(x_1,\cdots,x_n,u_1,\cdots,u_{n-1},x_{2n}).$$ Note
\begin{displaymath}
(E_{i,n}-E_{2n,n+i})(f)=-x_n\partial_{x_i}(f)=0,\ \ {\rm for}\ \
i\in \overline{1,n-1},
\end{displaymath}
that is
\begin{displaymath}
\partial_{x_i}(f)=0,\ \ {\rm for}\ \ i\in \overline{1,n-1}.
\end{displaymath} Moreover,
\begin{displaymath}
(E_{i,j}-E_{n+j,n+i})(f)=(-x_jx_{2n}+x_{n+j}x_n)\partial_{u_i}(f)=0,\
\ {\rm for}\ \ 1\leq i<j\leq n-1,
\end{displaymath}
which implies
\begin{displaymath}
\partial_{u_i}(f)=0,\ \ {\rm for}\ \ 1\leq i\leq n-2.
\end{displaymath}
Therefore, $f$ is independent of
$x_1,\cdots,x_{n-1},u_1,\cdots,u_{n-2}$. Furthermore,
\begin{displaymath}
(E_{n-1,2n}-E_{n,2n-1})(f)=(x_n\partial_{x_n}+x_{2n}\partial_{x_{2n}}+u_{n-1}\partial_{u_{n-1}}+2)\partial_{u_{n-1}}(f)=0.
\end{displaymath}
So
\begin{displaymath}
\partial_{u_{n-1}}(f)=0.
\end{displaymath}
Hence we are done.
\end{proof}
\psp

 Set
\begin{equation}
{\cal H}'_{\la k\ra} =\{f\in {\cal A}_{\la k\ra}|\eta (f)=0\}.
\end{equation}

\begin{thm}
If $k\geq 0$, then ${\cal A}_{\la k\ra}={\cal H}'_{\la
k\ra}\bigoplus\Delta {\cal A}_{\la k+2\ra}$ and ${\cal H}'_{\la
k\ra}$ is an irreducible highest weight module with the highest
weight $k\lambda_{n-1}-(k+2)\lambda_n$ and a corresponding highest
weight vector $x_{2n}^k$. Moreover,
\begin{equation}
\begin{array}{l}
\{\prod\limits_{t=1}^n x_{n+t}^{k_t}\prod\limits_{1\leq i<j\leq
n}(x_ix_{n+j}-x_jx_{n+i})^{k_{i,j}}|k_t,k_{i,j}\in \mathbb{N};\
\sum\limits_{t=1}^n k_t=k;\\ k_{i,j}k_t=0\ \mbox{\it for} \ i<j<t;
k_{i,j}k_{i_1,j_1}=0\  \mbox{\it for} \ i<i_1\ \mbox{\it and} \
j>j_1\}
\end{array}
\end{equation}
forms a basis of ${\cal H}'_k$.

 If $k<0$, then ${\cal A}_{\la
k\ra}={\cal H}_{\la k\ra}\bigoplus\eta {\cal A}_{\la k-2\ra}$ (cf.
(3.7)) and ${\cal H}_{\la k\ra}$ is an irreducible highest weight
module with highest weight $-k\lambda_{n-1}+(k-2)\lambda_n$ and a
corresponding highest weight
 vector $x_{n}^{-k}$. Moreover,
\begin{equation}
\begin{array}{l}
\{\prod\limits_{t=1}^n x_t^{k_t}\prod\limits_{1\leq i<j\leq
n}(x_ix_{n+j}-x_jx_{n+i})^{k_{i,j}}|k_t,k_{i,j}\in \mathbb{N};\
\sum\limits_{t=1}^n k_t=-k;\\ k_{i,j}k_t=0\  \mbox{\it for}\ i<j<t;\
k_{i,j}k_{i_1,j_1}=0\  \mbox{\it for}  \ i<i_1 \ \mbox{\it and} \
j>j_1\}
\end{array}
\end{equation}
forms a basis of ${\cal H}_{\la k\ra}$.
\end{thm}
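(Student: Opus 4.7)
The plan has three stages.

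\emph{Stage 1: The direct-sum decomposition.} First I would compute $[\eta,\Delta]=\sum_i(x_i\partial_{x_i}-x_{n+i}\partial_{x_{n+i}})$, which acts as the scalar $-k$ on $\mathcal{A}_{\langle k\rangle}$. Setting $E=\sum_i(x_{n+i}\partial_{x_{n+i}}-x_i\partial_{x_i})$, so that $[E,\eta]=2\eta$ and $[E,\Delta]=-2\Delta$, we see that $\{\eta,-\Delta,E\}$ forms an $sl(2,\mathbb{C})$-triple on $\mathcal{A}$. Crucially both $\eta$ and $\Delta$ preserve the total polynomial degree, so each $\mathcal{A}_N$ becomes a finite-dimensional $sl(2,\mathbb{C})$-module whose $E$-weights lie in $[-N,N]$. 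For $k\geq 0$, standard finite-dimensional $sl(2,\mathbb{C})$ theory applied to each $\mathcal{A}_N$ then gives
\begin{equation*}
\mathcal{A}_{\langle k\rangle}\cap\mathcal{A}_N=\bigl(\mathcal{H}'_{\langle k\rangle}\cap\mathcal{A}_N\bigr)\oplus \Delta\bigl(\mathcal{A}_{\langle k+2\rangle}\cap\mathcal{A}_N\bigr),
\end{equation*}
and summing over $N$ yields $\mathcal{A}_{\langle k\rangle}=\mathcal{H}'_{\langle k\rangle}\oplus\Delta\mathcal{A}_{\langle k+2\rangle}$. The case $k<0$ is symmetric with the roles of $\eta$ and $\Delta$ interchanged, giving $\mathcal{A}_{\langle k\rangle}=\mathcal{H}_{\langle k\rangle}\oplus\eta\mathcal{A}_{\langle k-2\rangle}$.

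\emph{Stage 2: Uniqueness of the singular vector, irreducibility, and highest weight.} By Lemma 3.2 every singular vector of $\mathcal{A}$ has the form $x_n^{k_n}x_{2n}^{k_{2n}}$, and since $\eta(x_n^{k_n}x_{2n}^{k_{2n}})=k_nx_n^{k_n-1}x_{2n}^{k_{2n}+1}$, only those with $k_n=0$ lie in $\mathcal{H}'$; combined with $\sum_S\alpha-\sum_T\alpha=k$, this singles out $x_{2n}^k$ as the unique singular vector in $\mathcal{H}'_{\langle k\rangle}$ when $k\geq 0$. To invoke Lemma 2.3 (via Remark 2.4), one checks that $\mathcal{A}$ is nilpotent with respect to $\mathcal{G}_+$: the pure differential positive root vectors $\partial_{x_i}\partial_{x_{n+j}}-\partial_{x_j}\partial_{x_{n+i}}$ strictly lower total degree, while the degree-preserving ones $-x_j\partial_{x_i}-x_{n+j}\partial_{x_{n+i}}$ are locally nilpotent on monomials, so the hypotheses of the cited result from [4] apply. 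From (2.2), $(x_{2n}^k\,|\,x_{2n}^k)=k!\neq 0$ (no sign, since $2n\in S$), and Lemma 2.3 delivers irreducibility. Using $E_{i,i}-E_{n+i,n+i}=-x_i\partial_{x_i}-x_{n+i}\partial_{x_{n+i}}-1$ for all $i\in T_2=\overline{1,n}$, the weight of $x_{2n}^k$ has coordinates $(-1,\ldots,-1,-(k+1))$, which converts to $k\lambda_{n-1}-(k+2)\lambda_n$. The case $k<0$ is handled analogously, with unique singular vector $x_n^{-k}$ and highest weight $-k\lambda_{n-1}+(k-2)\lambda_n$.

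\emph{Stage 3: The explicit basis.} Each factor appearing in (3.12) is $\eta$-closed: $\eta(x_{n+t})=0$ since $x_{n+t}$ is independent of $x_1,\ldots,x_n$, and $\eta(x_ix_{n+j}-x_jx_{n+i})=x_{n+i}x_{n+j}-x_{n+j}x_{n+i}=0$, so every listed polynomial lies in $\mathcal{H}'_{\langle k\rangle}$. To show that these polynomials actually span $\mathcal{H}'_{\langle k\rangle}$ (and are linearly independent), I would apply Xu's flag-PDE Lemma 3.1 to the equation $\eta(f)=0$, splitting $\eta=\mathcal{T}_1+\mathcal{T}_2$ with $\mathcal{T}_1=x_{2n}\partial_{x_n}$ and $\mathcal{T}_2=\sum_{i=1}^{n-1}x_{n+i}\partial_{x_i}$, and filtering $V=\mathbb{C}[x_1,\ldots,x_{n-1}]$ by total degree. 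Iterating $\sum_i(-\mathcal{T}_1^{-}\mathcal{T}_2)^i$ naturally produces the minors $x_ix_{n+j}-x_jx_{n+i}$ as building blocks, and the admissibility conditions $k_{i,j}k_t=0$ (for $i<j<t$) and $k_{i,j}k_{i_1,j_1}=0$ (for $i<i_1$ and $j>j_1$) are precisely the standard-monomial constraints picking out a basis. The main obstacle will be matching Xu's iterative output to the closed form (3.12) and verifying linear independence by comparing leading monomials; the basis (3.13) for the $k<0$ case follows by the same argument applied to $\Delta$.
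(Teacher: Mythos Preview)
Your Stage~1 is correct and is in fact cleaner than the paper's argument. The paper does not invoke the $sl(2,\mathbb{C})$-triple explicitly; instead it restricts to the finite-dimensional pieces $V_{k,m}=\{x^\alpha\in\mathcal{A}_{\langle k\rangle}:\sum_i\alpha_{n+i}=m\}$, sets $\varphi=(\Delta\eta)|_{V_{k,m}}$, checks separately that $\mathcal{H}_{\langle k\rangle}=0$ for $k>0$ so that $\ker\varphi=\ker\eta|_{V_{k,m}}\subset\mathcal{H}'_{\langle k\rangle}$, and concludes $V_{k,m}\subset\mathcal{H}'_{\langle k\rangle}+\Delta\mathcal{A}_{\langle k+2\rangle}$. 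Your use of finite-dimensional $sl(2)$ theory on each $\mathcal{A}_N$ subsumes this and gives the intersection $\{0\}$ for free. Stage~2 matches the paper essentially verbatim.

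Stage~3 has a real problem. Lemma~3.1 requires $\mathcal{T}_1$ to have a right inverse $\mathcal{T}_1^-$ with $\mathcal{T}_1^-(B)\subset B$; throughout the paper this is applied with $\mathcal{T}_1$ a pure differential operator such as $\partial_{x_n}\partial_{x_{2n}}$ or $\partial_{x_1}^2$, whose right inverse is an integration operator and stays inside polynomials. Your choice $\mathcal{T}_1=x_{2n}\partial_{x_n}$ has no polynomial right inverse: any $\mathcal{T}_1^-$ must introduce a factor $x_{2n}^{-1}$, so the hypotheses of Lemma~3.1 fail. The paper does \emph{not} use Lemma~3.1 here at all. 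It first observes directly (method of characteristics) that the monomials in (3.12) are $\eta$-closed, so their span $W$ sits inside $\mathcal{H}'_{\langle k\rangle}$; it then proves the reverse inclusion by checking that the negative root vectors $E_{n+p,q}-E_{n+q,p}$ and $E_{q,p}-E_{n+p,n+q}$ map $W$ into $W$, so that $\mathcal{H}'_{\langle k\rangle}=U(\mathcal{G}_-).x_{2n}^k\subset W$. Finally, the basis claim (the admissibility conditions on $k_t,k_{i,j}$) is established by an explicit induction on the number of variables: linear independence via a careful combinatorial argument on the index set $I(\vec{k})$, and spanning via Pl\"ucker-type identities such as $(x_ix_{n+j}-x_jx_{n+i})x_{n+m}=(x_ix_{n+m}-x_mx_{n+i})x_{n+j}-(x_jx_{n+m}-x_mx_{n+j})x_{n+i}$. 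Your proposal gives no substitute for this last step, and you yourself flag ``matching Xu's iterative output to the closed form (3.12)'' as the main obstacle; as written, that obstacle is not overcome.
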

\begin{proof} Note
\[
E_{i,j}-E_{n+j,n+i}\mid_{\cal
A}=-x_j\ptl_{x_i}-x_{n+j}\ptl_{x_{n+i}},\;\;
E_{i,n+j}-E_{j,n+i}\mid_{\cal
A}=\ptl_{x_i}\ptl_{x_{n+j}}-\ptl_{x_j}\ptl_{x_{n+i}}
\]
for $1\leq i<j\leq n$. So ${\cal A}$ is nilpotent with respect to
${\cal G}_+$. Suppose $k\geq0$. Solving the characteristic equations
of the partial differential equation
$(\sum\limits_{i=1}^nx_{n+i}\partial_{x_i})(f)=0$, we have
\[{\cal H}'_{\la k\ra}\supset W=\mbox{Span}\:\{\prod\limits_{t=1}^nx_{n+t}^{k_t}\prod\limits_{1\leq i<j\leq
n}(x_ix_{n+j}-x_jx_{n+i})^{k_{i,j}}|k_t,k_{i,j}\in\mathbb{N},
\sum\limits_{t=1}^nk_t=k\}.\]
  All singular vectors in ${\cal A}_{\la k\ra}$ are
$x_n^{k_n}x_{2n}^{k+k_n}$($k_n\in\mathbb{N}$) by Lemma 3.2. Since
\[\eta(x_n^{k_n}x_{2n}^{k+k_n})=k_nx_n^{k_n-1}x_{2n}^{k+k_n+1},\]
${\cal H}'_{\la k\ra}$ contains only one singular vector $x_{2n}^k$
which is an element of the righthand side of the above first
expression. Therefore, ${\cal H}'_{\la k\ra}=U({\cal G}).x_{2n}^k$
is irreducible according to Lemma 2.3. Set

Note
\[
\begin{array}{l}
(E_{n+p,q}-E_{n+q,p})(\prod\limits_{t=1}^nx_{n+t}^{k_t}\prod\limits_{1\leq
i<j\leq
n}(x_ix_{n+j}-x_jx_{n+i})^{k_{i,j}})\\=\prod\limits_{t=1}^nx_{n+t}^{k_t}\prod\limits_{1\leq
i<j\leq
n}(x_ix_{n+j}-x_jx_{n+i})^{k_{i,j}}(x_px_{n+q}-x_qx_{n+p})\in W,
\end{array}
\]
\[
\begin{array}{ll}
&(E_{q,p}-E_{n+p,n+q})\prod\limits_{t=1}^nx_{n+t}^{k_t}\prod\limits_{1\leq
i<j\leq n}(x_ix_{n+j}-x_jx_{n+i})^{k_{i,j}}\\
=&-k_qx_{n+p}x_{n+q}^{k_q-1}\prod\limits_{1\leq t\leq n,t\neq
q}x_{n+t}^{k_t}\prod\limits_{1\leq i<j\leq
n}(x_ix_{n+j}-x_jx_{n+i})^{k_{i,j}}-\sum\limits_{r=q+1}^nk_{q,r}(x_px_{n+r}-x_rx_{n+p})\\
&\times(x_qx_{n+r}-x_rx_{n+q})^{k_{q,r}-1}\prod\limits_{t=1}^nx_{n+t}^{k_t}\prod\limits_{1\leq
i<j\leq n,(i,j)\neq
(q,r)}(x_ix_{n+j}-x_jx_{n+i})^{k_{i,j}}-\sum\limits_{l=1}^{q-1}k_{l,q}\\
&\times(x_lx_{n+p}-x_px_{n+l})(x_lx_{n+q}-x_qx_{n+l})^{k_{l,q}-1}\prod\limits_{t=1}^nx_{n+t}^{k_t}\prod\limits_{1\leq
i<j\leq n,(i,j)\neq (l,q)}(x_ix_{n+j}-x_jx_{n+i})^{k_{i,j}}\\
\in &W.
\end{array}
\]
So we have ${\cal H}'_{\la k\ra}=U({\cal G}).x_{2n}^k=U({\cal
G}_-).x_{2n}^k\subset W$. Thus ${\cal H}'_{\la k\ra}=W$.

Since $x_{2n}^k\not\in\Delta
{\cal A}_{\la k+2\ra}$,  ${\cal H}'_{\la k\ra}\bigcap\Delta {\cal A}_{\la k+2\ra}=\{0\}$.\\
Set
\[
 V_{k,m}=\mbox{Span}\:\{x^\alpha\in
{\cal A}_{\la k\ra}|\sum\limits_{i=1}^n\alpha_{n+i}=m\}
\]
 and
$\varphi=(\Delta\eta)|_{V_{k,m}}$.  Then ${\cal A}_{\la
k\ra}=\bigoplus_{m=0}^\infty V_{k,m}$. Moreover, each $V_{k,m}$ is
finite-dimensional and $\varphi$ is a linear map from $V_{k,m}$ to
itself. Since $\Delta=-\sum\limits_{i=1}^nx_i\partial_{x_{n+i}}$ and
$\eta=\sum\limits_{i=1}^nx_{n+i}\partial_{x_i}$, swapping  $x_i$ and
$x_{n+i}$ with $i\in\overline{1,n}$ yields
\[
\begin{array}{c}
{\cal H}_{\la k\ra}
=\mbox{Span}\:\{\prod\limits_{t=0}x_t^{k_t}\prod\limits_{1\leq
i<j\leq n}(x_ix_{n+j}-x_jx_{n+i})^{k_{i,j}}\mid k_t,k_{i,j}\in
\mathbb{N};\sum k_t=-k\}
\end{array}
\]
(cf. (3.7)). So ${\cal H}_{\la k\ra}=0$ when $k>0$. Hence
$$\mbox{Ker}\:\varphi=\mbox{Ker}\:\eta |_{V_{k,m}}\subset {\cal H}'_{\la k\ra}.$$
 Furthermore, $\mbox{Im}\:\varphi\subset \Delta
V_{k+2,m+1}$. Therefore,
\[
V_{k,m}\subset {\cal H}'_{\la k\ra}+\Delta {\cal A}_{\la k+2\ra},
\]
which implies ${\cal A}_{\la k\ra}={\cal H}'_{\la k\ra}\bigoplus
\Delta{\cal A}_{\la k+2\ra}$.

 We still have to show that (3.12) is a
basis of ${\cal H}'_{\la k\ra}$. Set
\begin{equation}
{\cal H}'_{\la
k,m\ra}=\mathbb{C}[x_1,x_{n+1},\cdots,x_m,x_{n+m}]\bigcap {\cal
H}'_{\la k\ra}
\end{equation}
and
\begin{equation}
\begin{array}{lll}
B_{k,m}&=&\{\prod\limits_{t=1}^m x_{n+t}^{k_t}\prod\limits_{1\leq
i<j\leq m}(x_ix_{n+j}-x_jx_{n+i})^{k_{i,j}}\mid k_t,k_{i,j}\in
\mathbb{N}; \sum\limits_{t=1}^m k_t=k;\\ & & k_{i,j}k_t=0\ \mbox{\it
for} \ i<j<t;\  k_{i,j}k_{i_1,j_1}=0\ \mbox{\it for} \ i<i_1\
\mbox{\it and}\ j>j_1\}
\end{array}
\end{equation}
for $m=2,\cdots,n$. Obviously, $B_{k,2}$ forms a basis of ${\cal
H}'_{\la k,2\ra}$ for all $k\geq 0$. Assume $B_{k,m-1}$ is a basis
of ${\cal H}'_{\la k,m-1\ra}$ for all $k\geq0$, we will show that
${\cal H}'_{\la k,m\ra}=\mbox{Span}\:B_{k,m}$, and $B_{k,m}$ is
linearly independent.

Let
\[
\begin{array}{lll}
I&=&\{\vec{k}=(k_1,\cdots,k_m,k_{1,2},\cdots,k_{1,m},k_{2,3},\cdots,k_{m-1,m})\in\mathbb{N}^{\frac{m(m+1)}{2}}\mid
k_{i,j}k_t=0\\ & & \mbox{\it for}\ i<j<t;k_{i,j}k_{i_1,j_1}=0\
\mbox{\it for} \ i<i_1\ \mbox{\it and} \
j>j_1;\sum\limits_{t=1}^mk_t=k\}.
\end{array}
\]
Suppose
\[
\sum\limits_{\vec{k}\in I} a_{\vec{k}}\prod\limits_{t=1}^m
x_{n+t}^{k_t}\prod\limits_{1\leq i<j\leq
m}(x_ix_{n+j}-x_jx_{n+i})^{k_{i,j}}=0.
\]
Let $d=\mbox{max}\:\{\sum\limits_{j=2}^m k_{1,j}|a_{\vec{k}}\neq
0\}$. If $d=0$, then $a_{\vec{k}}=0$ by inductive assumption.
Otherwise, for each fixed  $\iota\in\mbb{N}$,
\[
\sum\limits_{\vec{k}\in I,\;k_1=\iota,\;\sum
k_{1,j}=d}a_{\vec{k}}\prod\limits_{t=2}^m
x_{n+t}^{k_t+k_{1,t}}\prod\limits_{2\leq i<j\leq
m}(x_ix_{n+j}-x_jx_{n+i})^{k_{i,j}}=0.
\]
Suppose  $\vec{k}\in I$ such that $k_1=\iota,\;\sum\limits_{j=2}^m
k_{1,j}=d$ and $a_{\vec{k}}\neq0$. Set
$c_t=k_t+k_{1,t},t\in\overline{2,m}$ and denote
\begin{eqnarray*}I(\vec{k})&=&\{\vec{k}'\in I\mid
k_1'=\iota;k_t'+k_{1,t}'=c_t,\; \mbox{for}\;t\in\overline{2,m};\\
& &k_{i,j}'=k_{i,j}\; \mbox{for}\;2\leq i<j\leq
m;a_{\vec{k}'}\neq0\}.\end{eqnarray*} Then we have
\[
\sum\limits_{\vec{k}'\in I(\vec{k})}a_{\vec{k}'}=0.
\]

Define $t_0=\mbox{min}\:\{t\mid \iota+\sum\limits_{i=2}^tc_t>k\}$,
whose existence is guaranteed by
$\iota+\sum\limits_{i=2}^mc_t=k+d>k$. Pick $\forall\vec{k}'\in
I(\vec{k})$ and denote $j_0=\mbox{min}\:\{j\mid k_{1,j}'>0\}$. In
particular, $k_{1,t}'=0$ and $k_t'=c_t$ for $2\leq t<j_0$. Moreover,
$k_{1,j_0}'>0$ implies that $k_t'=0$ and
$k_{1,t}'=k_t+k_{1,t}=c_t\;{\rm for}\; t>j_0$ by the definition of
the set $I$. So
$k_{j_0}'=k-\sum\limits_{t=2}^{j_0-1}c_t-\iota\geq0$, which shows
$j_0\leq t_0$. On the other hand,
$k_{1,j_0}'=c_{j_0}-k_{j_0}'=\iota+\sum\limits_{t=2}^{j_0}c_t-k>0$.
Hence $j_0\geq t_0$. Thus $j_0=t_0$. Now
\[
\left\{
\begin{array}{l}
k_t'=c_t,k_{1,t}'=0,\ \ if\ \ 2\leq t<t_0,\\
k_{t_0}'=k-\sum\limits_{t=2}^{t_0-1}c_t-k_1,k_{1,t_0}'=\sum\limits_{t=2}^{t_0}c_t+k_1-k\\
k_t'=0,k_{1,t}'=c_t,\ \ if\ \ t>t_0,\\
\end{array}
\right.
\]
that is, $\vec k'$ is uniquely determined by $\vec k$. Therefore,
$I(\vec{k})=\{\vec k\}$, which implies $a_{\vec{k}}=0$. This leads a
contradiction. So $B_{k,m}$ is linearly independent.

For any
\[
f=\prod\limits_{t=1}^m x_{n+t}^{k_t}\prod\limits_{1\leq i<j\leq
m}(x_ix_{n+j}-x_jx_{n+i})^{k_{i,j}}\in {\cal H}'_{\la k,m\ra},
\]
we can assume $k_{i,j}k_m=0$ for all $1\leq i<j\leq m-1$ because
\[
(x_ix_{n+j}-x_jx_{n+i})x_{n+m}=(x_ix_{n+m}-x_mx_{n+i})x_{n+j}-(x_jx_{n+m}-x_mx_{n+j})x_{n+i}.
\]
If $k_{i,j}=0$ for all $1\leq i<j\leq m-1$, then $f\in B_{k,m}$.
When $k_m=0$, we can assume
\[
\prod\limits_{t=1}^{m-1} x_{n+t}^{k_t}\prod\limits_{1\leq i<j\leq
m-1}(x_ix_{n+j}-x_jx_{n+i})^{k_{i,j}}\in \mbox{Span}\:B_{k,m-1},
\]
by inductive assumption. Hence
\begin{equation}
\prod\limits_{t=1}^{m-1} x_{n+t}^{k_t}\prod\limits_{1\leq i<j\leq
m-1}(x_ix_{n+j}-x_jx_{n+i})^{k_{i,j}}(x_{m-1}x_{n+m}-x_mx_{n+m-1})^{k_{m-1,m}}\in
\mbox{Span}\: B_{k,m}.
\end{equation}
Assume
\[
\prod\limits_{t=1}^{m-1} x_{n+t}^{k_t}\prod\limits_{1\leq i<j\leq
m-1}(x_ix_{n+j}-x_jx_{n+i})^{k_{i,j}}\prod\limits_{r=p+1}^{m-1}(x_rx_{n+m}-x_mx_{n+r})^{k_{r,m}}\in
\mbox{Span}\: B_{k,m},
\]
we want to show that
\begin{equation}
\prod\limits_{t=1}^{m-1} x_{n+t}^{k_t}\prod\limits_{1\leq i<j\leq
m-1}(x_ix_{n+j}-x_jx_{n+i})^{k_{i,j}}\prod\limits_{r=p}^{m-1}(x_rx_{n+m}-x_mx_{n+r})^{k_{r,m}}\in
\mbox{Span}\: B_{k,m}.
\end{equation}
If $k_{p,m}=0$ or $k_{i,j}=0$ for $p<i<j<m$, then the above
expression naturally holds by the definition of $B_{k,m}$.
Otherwise, $k_{p,m}> 0$ and $k_{i_0,j_0}>0$ for some $p<i_0<j_0<m$.
\[
\begin{array}{ll}
&\prod\limits_{t=1}^{m-1} x_{n+t}^{k_t}\prod\limits_{1\leq i<j\leq
m-1}(x_ix_{n+j}-x_jx_{n+i})^{k_{i,j}}\prod\limits_{i=p}^{m-1}(x_ix_{n+m}-x_mx_{n+i})^{k_{i,m}}\\
=&\prod\limits_{t=1}^{m-1} x_{n+t}^{k_t}\prod\limits_{1\leq i<j\leq
m-1,(i,j)\neq(i_0,j_0)}(x_ix_{n+j}-x_jx_{n+i})^{k_{i,j}}\prod\limits_{i=p+1}^{m-1}(x_ix_{n+m}-x_mx_{n+i})^{k_{i,m}}\\
&\times(x_{i_0}x_{n+j_0}-x_{j_0}x_{n+i_0})^{k_{i_0,j_0}-1}(x_px_{n+m}-x_mx_{n+p})^{k_{p,m}-1}
[(x_px_{n+j_0}-x_{j_0}x_{n+p})\\
&\times(x_{i_0}x_{n+m}-x_mx_{n+i_0})-(x_px_{n+i_0}-x_{i_0}x_{n+p})(x_{j_0}x_{n+m}-x_mx_{n+j_0})]\in
\mbox{Span}\: B_{k,m}
\end{array}
\]
by induction on $\min\{k_{m,p},\sum_{p<i<j<m}k_{i,j}\}$. Therefore,
\begin{equation}
\prod\limits_{t=1}^m x_{n+t}^{k_t}\prod\limits_{1\leq i<j\leq
m}(x_ix_{n+j}-x_jx_{n+i})^{k_{i,j}}\in \mbox{Span}\:B_{k,m},
\end{equation}
which implies ${\cal H}'_{\la k,m\ra}=\mbox{Span}\: B_{k,m}$ because
of
\[
{\cal H}'_{\la k,m\ra}=\mbox{Span}\:\{\prod\limits_{t=1}^m
x_{n+t}^{k_t}\prod\limits_{1\leq i<j\leq
m}(x_ix_{n+j}-x_jx_{n+i})^{k_{i,j}}\mid\sum k_t=k\}.
\]
By induction $B_{k,n}$ is a basis of ${\cal H}'_{\la k,n\ra}={\cal
H}'_{\la k\ra}$.

 The conclusion for $k<0$ can be proved similarly.
\end{proof}
\hfill\\

{\bf Case 2.} $\overline{1,n}=T_2\bigcup S_1$\\

We can always assume $T_2=\overline{1,s}$ for some $s\in
\overline{1,n-1}$ by symmetry.  Suppose $f\in {\cal A}$ is a
singular vector. Set
\begin{equation}
\mathcal
{G}^1=\sum\limits_{i,j=1}^s\mathbb{C}(E_{i,j}-E_{n+j,n+i})+\sum\limits_{1\leq
i<j\leq
s}[\mathbb{C}(E_{i,n+j}-E_{j,n+i})+\mathbb{C}(E_{n+j,i}-E_{n+i,j})].
\end{equation}
\begin{equation}
\mathcal
{G}^2=\sum\limits_{i,j=s+1}^n\mathbb{C}(E_{i,j}-E_{n+j,n+i})+\sum\limits_{s+1\leq
i<j\leq
n}[\mathbb{C}(E_{i,n+j}-E_{j,n+i})+\mathbb{C}(E_{n+j,i}-E_{n+i,j})].
\end{equation}

Then $f$ is also a singular vector if we view ${\cal A}$ as a
$\mathcal{G}^1$-module. So $f$ is independent of
$x_1,x_{n+1},\cdots,x_{s-1},x_{n+s-1}$ according to Case 1. We
continue our discussion according to two subcases as follows. \psp

1) $s=n-1$.\psp

 In this subcase,
$$\Delta=-\sum\limits_{i=1}^{n-1}x_i\partial_{x_{n+i}}+\partial_{x_n}\partial_{x_{2n}},$$
$$\eta=\sum\limits_{i=1}^{n-1}x_{n+i}\partial_{x_i}+x_nx_{2n}.$$
Note
$$(E_{n-1,n}-E_{2n,2n-1})(f)=(\partial_{x_{n-1}}\partial_{x_n}-x_{2n}\partial_{x_{2n-1}})(f)=0,$$
$$(E_{n-1,2n}-E_{n,2n-1})(f)=(\partial_{x_{n-1}}\partial_{x_{2n}}-x_n\partial_{x_{2n-1}})(f)=0.
$$
Solving the first equation, we get
\[
f\in\mbox{Span}\{\sum\limits_{t=0}^{k_{2n-1}}\frac{k_n!k_{n-1}!k_{2n-1}!}{(k_n+t)!(k_{n-1}+t)!(k_{2n-1}-t)!}x_n^{k_n+t}x_{n-1}^{k_{n-1}+t}x_{2n-1}^{k_{2n-1}-t}x_{2n}^{k_{2n}+t}\mid
k_nk_{n-1}=0\}
\]
by Lemma 3.1. Moreover, \[
\begin{array}{ll}
&(\partial_{x_{n-1}}\partial_{x_{2n}}-x_n\partial_{x_{2n-1}})(\sum\limits_{t=0}^{k_{2n-1}}\frac{k_n!k_{n-1}!k_{2n-1}!}{(k_n+t)!(k_{n-1}+t)!(k_{2n-1}-t)!}x_n^{k_n+t}x_{n-1}^{k_{n-1}+t}x_{2n-1}^{k_{2n-1}-t}x_{2n}^{k_{2n}+t})\\
=&\sum\limits_{t=0}^{k_{2n-1}}\frac{k_n!k_{n-1}!k_{2n-1}!(k_{2n}+t)}{(k_n+t)!(k_{n-1}+t-1)!(k_{2n-1}-t)!}x_n^{k_n+t}x_{n-1}^{k_{n-1}+t-1}x_{2n-1}^{k_{2n-1}-t}x_{2n}^{k_{2n}+t-1}\\
&-\sum\limits_{t=0}^{k_{2n-1}-1}\frac{k_n!k_{n-1}!k_{2n-1}!}{(k_n+t)!(k_{n-1}+t)!(k_{2n-1}-t-1)!}x_n^{k_n+t+1}x_{n-1}^{k_{n-1}+t}x_{2n-1}^{k_{2n-1}-t-1}x_{2n}^{k_{2n}+t}\\
=&(k_{2n}-k_n)\sum\limits_{t=1}^{k_{2n-1}}
\frac{k_n!k_{n-1}!k_{2n-1}!}{(k_n+t)!(k_{n-1}+t-1)!(k_{2n-1}-t)!}x_n^{k_n+t}x_{n-1}^{k_{n-1}+t-1}x_{2n-1}^{k_{2n-1}-t}x_{2n}^{k_{2n}+t-1}
\\
&+k_{n-1}k_{2n}x_n^{k_n}x_{n-1}^{k_{n-1}-1}x_{2n-1}^{k_{2n-1}}x_{2n}^{k_{2n}-1}\\
=&0
\end{array}
\]
if and only if
\[
k_{2n-1}=k_{n-1}k_{2n}=0\;{\rm or}\; k_{2n}=k_n=0\;{\rm or}\;\left\{
\begin{array}{l}
k_{2n}=k_n\\k_{n-1}=0.
\end{array}
\right.
\]
Therefore, $f=x_n^{k_n}x_{2n}^{k_{2n}}$ or
$\eta^lx_{n-1}^{k_{n-1}}$. Thus all singular vectors in ${\cal A}$
are $x_n^{k_n}x_{2n}^{k-k_n}$($k_n\in\overline{0,k}$) and
$\eta^lx_{n-1}^{2l-k}$. Moreover,
\begin{equation}
\Delta(x_n^{k_n}x_{2n}^{k-k_n})=0\ \ {\rm iff}\ \ k_n(k-k_n)=0,
\end{equation}
\begin{equation}
\Delta(\eta^lx_{n-1}^{2l-k})=0\ \ {\rm iff}\ \ l=k \; \mbox{or}\; 0.
\end{equation}
Therefore, when $k\leq0$, ${\cal H}_{\la k\ra}$ has only one
singular vector $x_{n-1}^{-k}$; if $k>0$, ${\cal
H}_{\la k\ra}$ has three singular vectors $x_{2n}^{k}$, $x_n^k$ and
$\eta^kx_{n-1}^k$.

\begin{thm}
If $k\leq0$, then ${\cal A}_{\la k\ra}={\cal H}_{\la
k\ra}\bigoplus\eta {\cal A}_{\la k-2\ra}$ and ${\cal H}_{\la k\ra}$
is an irreducible highest weight module with the highest weight
$-k\lambda_{n-2}+(k-1)\lambda_{n-1}+(k-1)\lambda_n$ and a
corresponding weight vector is $x_{n-1}^{-k}$. Moreover,
\begin{equation}
\begin{array}{c}
\{\sum\limits_{r_1,\cdots,r_{n-1}=0}^\infty\frac{\alpha_n!\alpha_{2n}!\prod\limits_{i=1}^{n-1}\left(
\begin{array}{c}
\alpha_{n+i}\\
r_i
\end{array}\right)(r_1+\cdots+r_{n-1})!}{(\alpha_n+r_1+\cdots+r_{n-1})!(\alpha_{2n}+r_1+\cdots+r_{n-1})!}x_n^{\alpha_n+r_1+\cdots+r_{n-1}}x_{2n}^{\alpha_{2n}+r_1+\cdots+r_{n-1}}\\
\prod\limits_{i=1}^{n-1}x_i^{\alpha_i+r_i}x_{n+i}^{\alpha_{n+i}-r_i}
|\alpha_1,\cdots,\alpha_{2n}\in
\mathbb{N},\alpha_n\alpha_{2n}=0,\alpha_n+\sum\limits_{i=1}^n\alpha_{n+i}-\sum\limits_{i=1}^{n-1}\alpha_i=k\}
\end{array}
\end{equation}
forms a basis of ${\cal H}_{\la k\ra}$.
\end{thm}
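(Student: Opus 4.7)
The plan is to follow the three-step strategy used in the proof of Theorem 3.3: first, classify the singular vectors in $\mathcal{H}_{\la k\ra}$ and invoke Lemma 2.3 (extended by Remark 2.4) to obtain irreducibility; second, compute the weight of $x_{n-1}^{-k}$; third, apply Xu's flag-PDE lemma (Lemma 3.1) to produce the explicit basis; and fourth, combine the basis with the bilinear form $(\cdot\mid\cdot)$ to establish the direct-sum decomposition.

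By (3.5), $\mathcal{H}_{\la k\ra}$ is a $\mathcal{G}$-submodule. The analysis immediately preceding the theorem shows $x_{n-1}^{-k}$ is its only singular vector when $k\le 0$. Each positive root vector is a sum of a pure differential operator and a degree-preserving operator of the form $x_n\ptl_{x_{n+i}}$ or $x_{2n}\ptl_{x_{n+i}}$; each summand is locally nilpotent and carries $\mathcal{A}_i$ into $\sum_{r\le i}\mathcal{A}_r$, so Theorem 3.3 of [4] makes $\mathcal{A}$, and hence $\mathcal{H}_{\la k\ra}$, nilpotent with respect to $\mathcal{G}_+$. Formula (2.2) gives $(x_{n-1}^{-k}\mid x_{n-1}^{-k})=(-1)^{-k}(-k)!\ne 0$ since $n-1\in T$, so Lemma 2.3 forces $\mathcal{H}_{\la k\ra}=U(\mathcal{G}).x_{n-1}^{-k}$ to be irreducible. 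A direct computation with (2.1) yields $E_{n-1,n-1}.x_{n-1}^{-k}=(k-1)x_{n-1}^{-k}$, $E_{j,j}.x_{n-1}^{-k}=-x_{n-1}^{-k}$ for $j\in T_2\setminus\{n-1\}$, and all other diagonal operators annihilate $x_{n-1}^{-k}$; substituting into the formulas for $h_{n-2},h_{n-1},h_n$ (and checking that the remaining $h_i$ vanish on $x_{n-1}^{-k}$) gives the stated weight $-k\lmd_{n-2}+(k-1)\lmd_{n-1}+(k-1)\lmd_n$.

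For the basis, I apply Lemma 3.1 with $B=\mbb{C}[x_n,x_{2n}]$, with $V=\mbb{C}[x_1,\ldots,x_{n-1},x_{n+1},\ldots,x_{2n-1}]$ filtered by total degree in $x_{n+1},\ldots,x_{2n-1}$, with $\mathcal{T}_1=\ptl_{x_n}\ptl_{x_{2n}}$ whose right inverse is $\mathcal{T}_1^-(x_n^ax_{2n}^b)=\frac{x_n^{a+1}x_{2n}^{b+1}}{(a+1)(b+1)}$, and with $\mathcal{T}_2=-\sum_{i=1}^{n-1}x_i\ptl_{x_{n+i}}$. Hypotheses (3.8)--(3.9) are immediate because $\mathcal{T}_1$ acts only on $B$-factors and $\mathcal{T}_2$ strictly lowers the $x_{n+i}$-filtration. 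The kernel $\ker\mathcal{T}_1|_B=\mbb{C}[x_n]+\mbb{C}[x_{2n}]$ has monomial basis $\{x_n^{\alpha_n}x_{2n}^{\alpha_{2n}}\mid\alpha_n\alpha_{2n}=0\}$. Taking $g=\prod_{i=1}^{n-1}x_i^{\alpha_i}x_{n+i}^{\alpha_{n+i}}$ and expanding $\sum_{j\ge 0}(-\mathcal{T}_1^-\mathcal{T}_2)^j(hg)$ while tracking the multiplicities $(r_1,\ldots,r_{n-1})$ of each summand of $\mathcal{T}_2$ applied reproduces exactly the sum in (3.24): the prefactors $\binom{\alpha_{n+i}}{r_i}$ count choices of which $\ptl_{x_{n+i}}$-differentiation lowers $\alpha_{n+i}$, the multinomial $(r_1+\cdots+r_{n-1})!$ orders the applications across indices, and the denominator comes from iterated integration via $\mathcal{T}_1^-$. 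Intersecting with $\mathcal{A}_{\la k\ra}$ imposes $\alpha_n+\sum_i\alpha_{n+i}-\sum_i\alpha_i=k$, and linear independence is clear since distinct index tuples give distinct leading (i.e.\ $r_1=\cdots=r_{n-1}=0$) monomials.

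For the decomposition, a direct check using (2.2) gives the adjointness relation $(\eta f\mid g)=(f\mid\Delta g)$ for all $f,g\in\mathcal{A}$ (the $\sum x_{n+i}\ptl_{x_i}$ part is adjoint to $-\sum x_i\ptl_{x_{n+i}}$ with a sign from $n-1\in T$, while $x_nx_{2n}$ is adjoint to $\ptl_{x_n}\ptl_{x_{2n}}$). Consequently $\eta\mathcal{A}_{\la k-2\ra}\perp\mathcal{H}_{\la k\ra}$; moreover, since the form is diagonal and non-degenerate on $\mathcal{A}_{\la k\ra}$, and the basis in (3.24) shows it restricts non-degenerately to $\mathcal{H}_{\la k\ra}$ (each basis element has nonzero self-pairing via its leading monomial), we get $\mathcal{H}_{\la k\ra}\cap\eta\mathcal{A}_{\la k-2\ra}=0$. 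The equality $\mathcal{A}_{\la k\ra}=\mathcal{H}_{\la k\ra}+\eta\mathcal{A}_{\la k-2\ra}$ then follows layerwise on the finite-dimensional pieces $V_{k,m}=\{x^\alpha\in\mathcal{A}_{\la k\ra}\mid \alpha_n+\alpha_{2n}+\sum_i\alpha_{n+i}=m\}$: given $f\in V_{k,m}$, first project onto $\mathcal{H}_{\la k\ra}$ using the non-degenerate restricted form to get $h\in\mathcal{H}_{\la k\ra}$ with $f-h\perp\mathcal{H}_{\la k\ra}$, and then identify $\mathcal{H}_{\la k\ra}^\perp$ with $\eta\mathcal{A}_{\la k-2\ra}$ by using the adjointness to rewrite $(f-h\mid\mathcal{H}_{\la k\ra})=0$ as $\Delta(f-h)\perp$ everything, i.e.\ $\Delta(f-h)=0$, which gives $f-h\in\mathcal{H}_{\la k\ra}$ unless we peel off an $\eta$-piece inductively on $m$. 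The principal obstacle is this last layerwise step: since $\Delta$ and $\eta$ each split into two summands that shift $m$ by different amounts, the identification $\mathcal{H}_{\la k\ra}^\perp\cap V_{k,m}=\eta\mathcal{A}_{\la k-2\ra}\cap V_{k,m}$ requires either a careful bi-filtration tracking both $\alpha_n+\alpha_{2n}$ and $\sum_i\alpha_{n+i}$ separately, or an appeal to the adjointness combined with a finite-dimensional dimension count on each layer.
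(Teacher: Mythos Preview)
Your treatment of irreducibility, the highest weight, and the explicit basis via Lemma~3.1 is correct and matches the paper exactly (same $\mathcal T_1=\partial_{x_n}\partial_{x_{2n}}$, same $\mathcal T_1^-$). Your adjointness identity $(\eta f\mid g)=(f\mid\Delta g)$ is also correct and gives $(\eta\mathcal A_{\la k-2\ra})^{\perp}=\mathcal H_{\la k\ra}$, hence $\mathcal H_{\la k\ra}\cap\eta\mathcal A_{\la k-2\ra}=0$ once you know the form is nondegenerate on $\mathcal H_{\la k\ra}$ (which follows, as in the proof of Lemma~2.3, from irreducibility together with $(x_{n-1}^{-k}\mid x_{n-1}^{-k})\ne 0$). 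The paper obtains the zero intersection more directly: since every monomial in $\eta(\mathcal A)$ carries an $x_{n+i}$ factor ($i\le n-1$) or an $x_nx_{2n}$ factor, $x_{n-1}^{-k}\notin\eta\mathcal A_{\la k-2\ra}$, and irreducibility of $\mathcal H_{\la k\ra}$ then forces the intersection to vanish.

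The genuine gap is the spanning assertion $\mathcal A_{\la k\ra}=\mathcal H_{\la k\ra}+\eta\mathcal A_{\la k-2\ra}$. From adjointness you only get one perp, and in infinite dimensions $V^{\perp\perp}=V$ can fail; your layerwise sketch does not close because, as you note, $\eta$ and $\Delta$ do not respect the grading you chose. The paper avoids orthogonality entirely here: it uses the subalgebra $\mathcal G^{1}\cong so(2(n-1))$ together with Theorem~3.3 to write any monomial as $\sum\xi\cdot x_{n-1}^{a}x_{2n-1}^{b}x_n^{\alpha_n}x_{2n}^{\alpha_{2n}}$ with $\xi\in U(\mathcal G^{1})$, and then checks by an explicit four--variable computation (splitting on whether $k_{n-1}<\min(k_n,k_{2n})$ or not) that each such term lies in $\mathcal H_{\la k\ra}+\eta\mathcal A_{\la k-2\ra}$. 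If you prefer to stay closer to your outline, there is a clean structural fix: $\Delta$ is locally nilpotent (it is a sum of commuting locally nilpotent operators), and $[\Delta,\eta]$ acts on $\mathcal A_{\la m\ra}$ as the scalar $m+1$; hence for $w\in\mathcal H_{\la m\ra}$ one has $\Delta\eta^{p}w=p(m+p)\,\eta^{p-1}w$. For $k\le 0$ all the relevant scalars $p(m+p)$ with $m\le k-2$ are nonzero, so by induction on the $\Delta$--nilpotency index every $v\in\mathcal A_{\la k\ra}$ decomposes as $\sum_{l\ge0}\eta^{l}u_l$ with $u_l\in\mathcal H_{\la k-2l\ra}$, which immediately yields both the sum and its directness. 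Either route completes your proof.
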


\begin{proof}
We first show that ${\cal A}_{\la k\ra}\subset{\cal H}_{\la
k\ra}+\eta{\cal A}_{\la k-2 \ra}\; (k\leq0)$. For $\forall
x^\alpha\in{\cal A}_{\la k\ra}$,  Theorem 3.3 enables us to write
$$
\begin{array}{lll}
x^\alpha&=&(\prod\limits_{i=1}^{n-1}
x_i^{\alpha_i}x_{n+i}^{\alpha_{n+i}})x_n^{\alpha_n}x_{2n}^{\alpha_{2n}}\\
&=&\sum\limits_{k_{2n-1}-k_{n-1}=\sum\limits_{i=1}^{n-1}(\alpha_{n+i}-\alpha_i)}\xi_{k_{n-1},k_{2n-1}}x_{n-1}^{k_n-1}x_{2n-1}^{k_{2n-1}}x_n^{\alpha_n}x_{2n}^{\alpha_{2n}},
\end{array}
$$
where $\xi_{k_{n-1},k_{2n-1}}\in U({\cal G}^1)$. Thus it is
sufficient to show that for $\forall k_{2n-1}+k_n+k_{2n}-k_{n-1}=k$,
$x_{n-1}^{k_n-1}x_{2n-1}^{k_{2n-1}}x_n^{k_n}x_{2n}^{k_{2n}}\in{\cal
H}_{\la k\ra}+\eta{\cal A}_{\la k-2 \ra}$. Set
$l=\mbox{min}\:\{k_n,k_{2n}\}$. If $k_{n-1}<l$, then
\[
\begin{array}{r}
x_{n-1}^{k_n-1}x_{2n-1}^{k_{2n-1}}x_n^{k_n}x_{2n}^{k_{2n}}=\eta(\sum\limits_{i=1}^{k_{n-1}+1}\frac{(-1)^{i-1}k_{n-1}!}{(k_{n-1}-i+1)!}
x_{n-1}^{k_{n-1}-i+1}x_{2n-1}^{k_{2n-1}+i-1}x_n^{k_n-i}x_{2n}^{k_{2n}-i}).
\end{array}
\]
Suppose $k_{n-1}\geq l$. Observe
\[
\Delta(\sum\limits_{t=0}^{l+k_{2n-1}}\frac{(k_n-l)!(k_{2n}-l)!(k_{2n-1}+l)!}{(k_n-l+t)!(k_{2n}-l+t)!(k_{2n-1}+l-t)!}x_{n-1}^{k_{n-1}-l+t}x_{2n-1}^{k_{2n-1}+l-t}x_n^{k_n-l+t}x_{2n}^{k_{2n}-l+t})=0,
\]
\[
\begin{array}{r}
x_{n-1}^{k_n-1}x_{2n-1}^{k_{2n-1}}x_n^{k_n}x_{2n}^{k_{2n}}=\eta(\sum\limits_{i=1}^{l-t}\frac{(-1)^{i-1}k_{n-1}!}{(k_{n-1}-i+1)!}
x_{n-1}^{k_{n-1}-i+1}x_{2n-1}^{k_{2n-1}+i-1}x_n^{k_n-i}x_{2n}^{k_{2n}-i})
+\\(-1)^{l-t}\frac{k_{n-1}!}{(k_{n-1}-l+t)!}x_{n-1}^{k_{n-1}-l+t}x_{2n-1}^{k_{2n-1}+l-t}x_n^{k_n-l+t}x_{2n}^{k_{2n}-l+t}
\end{array}
\]
for $t\in\overline{0,l}$, and
\[
\begin{array}{r}
x_{n-1}^{k_n-1}x_{2n-1}^{k_{2n-1}}x_n^{k_n}x_{2n}^{k_{2n}}=\eta(\sum\limits_{i=1}^{-l+t}\frac{(-1)^{i-1}k_{n-1}!}{(k_{n-1}+i)!}x_{n-1}^{k_{n-1}+i}x_{2n-1}^{k_{2n-1}-i}x_n^{k_n+i-1}x_{2n}^{k_{2n}+i-1})
+\\(-1)^{t-l}\frac{k_{n-1}!}{(k_{n-1}-l+t)!}x_{n-1}^{k_{n-1}-l+t}x_{2n-1}^{k_{2n-1}+l-t}x_n^{k_n-l+t}x_{2n}^{k_{2n}-l+t}
\end{array}
\]
for $t\in\overline{l+1,l+k_{2n-1}}$.  So
\[
\begin{array}{ll}
&(\sum\limits_{t=0}^{l+k_{2n-1}}\frac{(-1)^{l-t}(k_{n-1}-l+t)!(k_n-l)!(k_{2n}-l)!(k_{2n-1}+l)!}{k_{n-1}!(k_n-l+t)!(k_{2n}-l+t)!(k_{2n-1}+l-t)!})x_{n-1}^{k_n-1}x_{2n-1}^{k_{2n-1}}x_n^{k_n}x_{2n}^{k_{2n}}\\
=&\sum\limits_{t=0}^l\frac{(-1)^{l-t}(k_{n-1}-l+t)!(k_n-l)!(k_{2n}-l)!(k_{2n-1}+l)!}{(k_n-l+t)!(k_{2n}-l+t)!(k_{2n-1}+l-t)!}\eta(\sum\limits_{i=1}^{l-t}\frac{(-1)^{i-1}k_{n-1}!}{(k_{n-1}-i+1)!}
x_{n-1}^{k_{n-1}-i+1}x_{2n-1}^{k_{2n-1}+i-1}x_n^{k_n-i}x_{2n}^{k_{2n}-i})\\
&+\sum\limits_{t=l+1}^{l+k_{2n-1}}\frac{(-1)^{l-t}(k_{n-1}-l+t)!(k_n-l)!(k_{2n}-l)!(k_{2n-1}+l)!}{(k_n-l+t)!(k_{2n}-l+t)!(k_{2n-1}+l-t)!}\eta(\sum\limits_{i=1}^{-l+t}\frac{(-1)^{i-1}k_{n-1}!}{(k_{n-1}+i)!}x_{n-1}^{k_{n-1}+i}x_{2n-1}^{k_{2n-1}-i}x_n^{k_n+i-1}x_{2n}^{k_{2n}+i-1})\\
&+\sum\limits_{t=0}^{l+k_{2n-1}}\frac{(k_n-l)!(k_{2n}-l)!(k_{2n-1}+l)!}{(k_n-l+t)!(k_{2n}-l+t)!(k_{2n-1}+l-t)!}x_{n-1}^{k_{n-1}-l+t}x_{2n-1}^{k_{2n-1}+l-t}x_n^{k_n-l+t}x_{2n}^{k_{2n}-l+t}.
\end{array}
\]
Hence
$x_{n-1}^{k_n-1}x_{2n-1}^{k_{2n-1}}x_n^{k_n}x_{2n}^{k_{2n}}\in{\cal
H}_{\la k\ra}+\eta{\cal A}_{\la k-2 \ra}$.

Note
\[
E_{i,j}-E_{n+j,n+i}\mid_{\cal
A}=-x_j\partial_{x_i}-x_{n+j}\partial_{x_{n+i}},\;
E_{i,n+j}-E_{j,n+i}\mid_{\cal
A}=\partial_{x_i}\partial_{x_{n+j}}-\partial_{x_j}\partial_{x_{n+i}}
\]
if $1\leq i<j\leq n-1$,
\[
E_{p,n}-E_{2n,n+p}\mid_{\cal
A}=\partial_{x_p}\partial_{x_n}-\partial_{x_{2n}}\partial_{x_{n+p}},\;
E_{p,2n}-E_{n,n+p}\mid_{\cal
A}=\partial_{x_p}\partial_{x_{2n}}-x_n\partial_{x_{n+p}}
\]
if $p\in\overline{1,n-1}$. So ${\cal A}$ is also nilpotent with
respect to ${\cal G}_+$. Therefore, the submodule ${\cal H}_{\la
k\ra}$ is irreducible by lemma 2.3 when $k\leq0$. Since
$x_{n-1}^{-k}\notin\eta{\cal A}_{\la k-2\ra}$, we get ${\cal H}_{\la
k\ra}\bigcap\eta{\cal A}_{\la k-2\ra}=0$.

Using Lemma 3.1 with ${\cal T}_1=\partial_{x_n}\partial_{x_{2n}}$,
${\cal T}_2=-\sum\limits_{i=1}^{n-1}x_i\partial_{x_{n+i}}$ and
$${\cal T}_1^-(x^\alpha)=\frac{x_nx_{2n}x^\alpha}{(\alpha_n+1)(\alpha_{2n}+1)},$$
we obtain that (3.23) is a basis of ${\cal H}_{\la k\ra}$.
\end{proof}
\psp

2) $s<n-1$. \psp

Let
\begin{equation}
u=\sum\limits_{j=s+1}^nx_jx_{n+j},
\end{equation}
we can conclude from the canonical representation
 and Case 1 that any singular vector in ${\cal A}$ should be
of the form  $f(x_s,x_{n+s},u,x_{s+1})$. Note
$$(E_{s,j}-E_{n+j,n+s})(f)=x_{n+j}(\partial_{x_s}\partial_u-\partial_{x_{n+s}})(f)=0,\ \ s+1<j\leq n.$$
Thus
$$(\partial_{x_s}\partial_u-\partial_{x_{n+s}})(f)=0.$$
Since
$$(E_{s,s+1}-E_{n+s+1,n+s})(f)=\partial_{x_s}\partial_{x_{s+1}}(f)=0,$$
we have $f=\eta^lx_s^{k_s}$ or $\eta^lx_{s+1}^{k_{s+1}}$. Recall the
operator $\Delta$ defined in (3.3). First,
\[
\Delta(\eta^lx_{s+1}^{k_{s+1}})=\sum\limits_{j=s+1}^n\partial_{x_j}\partial_{x_{n+j}}\big((\sum\limits_{j=s+1}^nx_jx_{n+j})^lx_{s+1}^{k_{s+1}}\big)\neq0\
\ {\rm if}\ \ l>0.
\] Moreover,
\[
\begin{array}{lll}
\Delta(\eta^lx_s^{k_s})&=&\Delta(\sum\limits_{t=0}^l\frac{l!k_s!}{t!(l-t)!(k_s-t)!}x_s^{k_s-t}x_{n+s}^tu^{l-t})\\
 &=&-\sum\limits_{t=1}^l\frac{l!k_s!}{(t-1)!(l-t)!(k_s-t)!}x_s^{k_s-t+1}x_{n+s}^{t-1}u^{l-t}+\sum\limits_{t=0}^{l-1}\frac{l!k_s!(l-t+n-s-1)}{t!(l-t-1)!(k_s-t)!}x_s^{k_s-t}x_{n+s}^tu^{l-t-1}\\
 &=&\sum\limits_{t=0}^{l-1}\frac{l!k_s!(-k_s+l+n-s-1)}{t!(l-t-1)!(k_s-t)!}x_s^{k_s-t}x_{n+s}^tu^{l-t-1},
\end{array}
\]
\[
\Delta(\eta^lx_s^{k_s})=0 \Leftrightarrow l=k_s-(n-s-1)\ \ {\rm or}\
\ l=0.
\]
Therefore, when $k\leq -(n-s-1)$, ${\cal H}_{\la k\ra}$ contains
only one singular vector $x_s^{-k}$ ; if
$k>-(n-s-1)$, ${\cal H}_{\la k\ra}$ has two singular vectors
$x_{s+1}^k$(if $k>0$; $x_s^{-k}$ if $k\leq0$) and
$\eta^{k+(n-s-1)}x_s^{k+2(n-s-1)}$.
\begin{thm}If $k\leq -(n-s-1)$, then ${\cal A}_{\la k\ra}={\cal H}_{\la k\ra}\bigoplus\eta {\cal A}_{\la k-2\ra}$,
${\cal H}_{\la k\ra}$ is an irreducible highest weight module with
highest weight $-k\lambda_{s-1}+(k-1)\lambda_s$, a corresponding
highest weight vector is $x_s^{-k}$.
\begin{equation}
\begin{array}{c}
\{
\sum\limits_{r_1,\cdots,r_{n-1}=0}^\infty\frac{(-1)^{\sum\limits_{j=s+1}^n
r_j}\alpha_n!\alpha_{2n}!\prod\limits_{j=s+1}^{n-1}\left(
\begin{array}{c}
\alpha_j\\
r_j
\end{array}
\right) \prod\limits_{i=1}^{n-1}\left(
\begin{array}{c}
\alpha_{n+i}\\
r_i
\end{array}
\right) (r_1+\cdots+r_{n-1})!\prod\limits_{j=s+1}^{n-1}r_j!}
{(\alpha_n+r_1+\cdots+r_{n-1})!(\alpha_{2n}+r_1+\cdots+r_{n-1})!}\\
\prod\limits_{i=1}^sx_i^{\alpha_i+r_i}\prod\limits_{j=s+1}^{n-1}x_j^{\alpha_j-r_j}
\prod\limits_{i=1}^{n-1}x_{n+i}^{\alpha_{n+i}-r_i}
x_n^{\alpha_n+r_1+\cdots+r_{n-1}}x_{2n}^{\alpha_{2n}+r_1+\cdots+r_{n-1}}|\\
\alpha_1,\cdots,\alpha_{2n}\in \mathbb{N},
\alpha_n\alpha_{2n}=0,\sum\limits_{j=s+1}^n\alpha_j+\sum\limits_{i=1}^n\alpha_{n+i}-\sum\limits_{i=1}^s\alpha_i=k\}
\end{array}
\end{equation}
forms a basis of ${\cal H}_{\la k\ra}$.
\end{thm}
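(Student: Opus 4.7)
The plan is to follow the four-step template used in the proof of Theorem 3.4: (i) establish ${\cal A}_{\la k\ra}={\cal H}_{\la k\ra}+\eta{\cal A}_{\la k-2\ra}$, (ii) check the intersection is zero using the explicit singular vector $x_s^{-k}$, (iii) deduce irreducibility of ${\cal H}_{\la k\ra}$ from Remark 2.4 and Lemma 2.3, and (iv) extract the explicit basis (3.26) via Xu's Lemma 3.1.

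For (i), I would first reduce a general monomial in ${\cal A}_{\la k\ra}$ to a combination of the canonical forms $\eta^\ell x_s^{k_s}$ and $\eta^\ell x_{s+1}^{k_{s+1}}$ (times $\mathcal{G}^2$-invariants) by applying elements of $U(\mathcal{G}^1)$, paralleling the first reduction in Theorem 3.4. Then using the explicit computation of $\Delta(\eta^\ell x_s^{k_s})$ displayed just before the theorem, I would peel off $\eta$-factors inductively: whenever $\ell\neq 0$ and $\ell\neq k_s-(n-s-1)$, a telescoping identity (in the same spirit as the two telescoping sums appearing in the proof of Theorem 3.4) writes $\eta^\ell x_s^{k_s}$ as an element of $\eta{\cal A}_{\la k-2\ra}$ plus a correction that, combined with the two harmonic ``anchors'' $\ell=0$ and $\ell=k_s-(n-s-1)$, lands in ${\cal H}_{\la k\ra}$. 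The assumption $k\leq-(n-s-1)$ guarantees the second anchor is available for the relevant range of $k_s$.

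For (ii) and (iii), the singular-vector analysis preceding the theorem already shows that ${\cal H}_{\la k\ra}$ has the unique singular vector $x_s^{-k}$ in this range of $k$. A direct weight/degree inspection rules out $x_s^{-k}\in\eta{\cal A}_{\la k-2\ra}$, and if the intersection were nonzero the nilpotence of ${\cal A}$ with respect to ${\cal G}_+$ would force it to contain this singular vector, a contradiction. The same nilpotence, combined with Lemma 2.3 and Remark 2.4, gives irreducibility of ${\cal H}_{\la k\ra}$; the nilpotence itself follows because each positive root vector in ${\cal G}_+|_{\cal A}$ is either a pure differential operator in the $T_2$-variables, a pure differential operator in the $S_1$-variables, or a product of two such, and each preserves the filtration by total degree, so Theorem 3.3 of [4] applies.

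For (iv), I would invoke Lemma 3.1 with the choices ${\cal T}_1=\sum_{i=s+1}^n\partial_{x_i}\partial_{x_{n+i}}$, ${\cal T}_2=-\sum_{i=1}^s x_i\partial_{x_{n+i}}$, the subalgebra $B=\mathbb{C}[x_1,\ldots,x_s,x_{n+1},\ldots,x_{n+s}]$, and the canonical ${\cal T}_1$-harmonic basis of $V=\mathbb{C}[x_{s+1},\ldots,x_n,x_{n+s+1},\ldots,x_{2n}]$ together with Xu's right inverse ${\cal T}_1^-$ for the ordinary Laplacian in $V$. Expanding the geometric series $\sum_{i\geq 0}(-{\cal T}_1^-{\cal T}_2)^i$ applied to these seed monomials produces exactly the sum over $r_1,\ldots,r_{n-1}$ with the binomial coefficients and factorials appearing in (3.26), where the summation variable $r_i$ records how many times $x_i\partial_{x_{n+i}}$ has been applied and paired with a Laplacian-inverse factor of the form $x_jx_{n+j}/((\alpha_j+1)(\alpha_{n+j}+1))$. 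The main obstacle will be the combinatorial bookkeeping: matching the explicit iterated coefficients against the formula (3.26), confirming that each such expression is annihilated by $\Delta$, and checking the linear independence of the resulting family; the remaining steps (ii)--(iii) are essentially immediate from the preparatory material.
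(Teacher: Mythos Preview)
Steps (i)--(iii) of your plan match the paper's argument closely and would go through. The genuine problem is in step (iv): your proposed application of Lemma~3.1 does not satisfy its hypotheses and, even if it did, would not produce the displayed basis.

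In Lemma~3.1 the operator ${\cal T}_1$ must act on the subalgebra $B$ and commute with multiplication by elements of $V$ (the condition ${\cal T}_1(\eta_1\eta_2)={\cal T}_1(\eta_1)\eta_2$ for $\eta_1\in B$, $\eta_2\in V$), while ${\cal T}_2$ must commute with multiplication by $B$. Your choice ${\cal T}_1=\sum_{i=s+1}^n\partial_{x_i}\partial_{x_{n+i}}$ differentiates in the $V$-variables, so it does \emph{not} commute with $V$-multiplication; and your ${\cal T}_2=-\sum_{i=1}^s x_i\partial_{x_{n+i}}$ differentiates in the $B$-variables, so it does \emph{not} commute with $B$-multiplication. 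Swapping the roles does not help either: $-\sum_{i=1}^s x_i\partial_{x_{n+i}}$ is nilpotent on polynomials and therefore has no right inverse, while a right inverse for the full $S_1$-Laplacian would require the entire harmonic decomposition on $V$, not a simple monomial formula.

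More to the point, look at the shape of the basis you are trying to reproduce: the constraint $\alpha_n\alpha_{2n}=0$ and the fact that precisely $x_n$ and $x_{2n}$ acquire the common extra exponent $r_1+\cdots+r_{n-1}$ are the fingerprint of the splitting the paper actually uses, namely ${\cal T}_1=\partial_{x_n}\partial_{x_{2n}}$ with right inverse ${\cal T}_1^-(x^\alpha)=\dfrac{x_nx_{2n}\,x^\alpha}{(\alpha_n+1)(\alpha_{2n}+1)}$, and ${\cal T}_2=\Delta-{\cal T}_1=-\sum_{i=1}^s x_i\partial_{x_{n+i}}+\sum_{j=s+1}^{n-1}\partial_{x_j}\partial_{x_{n+j}}$. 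Here $B=\mathbb{C}[x_n,x_{2n}]$ and $V$ is generated by the remaining variables; all the hypotheses of Lemma~3.1 are then transparent, and expanding $\sum_{i\geq 0}(-{\cal T}_1^-{\cal T}_2)^i$ on the seed monomials with $\alpha_n\alpha_{2n}=0$ gives exactly the displayed sum (the index $r_i$ for $i\le s$ comes from $-x_i\partial_{x_{n+i}}$, and for $s<i<n$ from $\partial_{x_i}\partial_{x_{n+i}}$, each followed by one application of ${\cal T}_1^-$). Replace your step (iv) with this splitting and the formula drops out.
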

\begin{proof}
To show ${\cal A}_{\la k\ra}={\cal H}_{\la k\ra}+\eta{\cal A}_{\la
k-2\ra}$, it is sufficient to show that
\begin{equation}
x_s^{k_s}x_{n+s}^{k_{n+s}}x_{s+1}^{k_{s+1}}u^{k_u}\in{\cal H}_{\la
k\ra}+\eta{\cal A}_{\la k-2\ra}\ \ {\rm for}\ \
k_{s+1}+2k_u+k_{n+s}-k_s=k.
\end{equation}
If $k_s<k_u$, then
\[
x_s^{k_s}x_{n+s}^{k_{n+s}}x_{s+1}^{k_{s+1}}u^{k_u}=\eta(\sum\limits_{i=1}^{k_s+1}\frac{(-1)^{i-1}k_s!}{(k_s-i+1)!}x_s^{k_s-i+1}x_{n+s}^{k_{n+s}+i-1}x_{s+1}^{k_{s+1}}u^{k_u-i}).
\]
Otherwise, note
\begin{equation}
\Delta(\sum\limits_{t=0}^{k_{n+s}+k_u}\frac{(k_{s+1}+n-s-1)!(k_{n+s}+k_u)!}{t!(k_{s+1}+n-s-1+t)!(k_{n+s}+k_u-t)!}x_s^{k_s-k_u+t}x_{n+s}^{k_{n+s}+k_u-t}x_{s+1}^{k_{s+1}}u^t)=0,
\end{equation}
\begin{equation}
\begin{array}{r}
x_s^{k_s}x_{n+s}^{k_{n+s}}x_{s+1}^{k_{s+1}}u^{k_u}=\eta(\sum\limits_{i=1}^{t-k_u}\frac{(-1)^{i-1}k_s!}{(k_s+i)!}x_s^{k_s+i}x_{n+s}^{k_{n+s}-i}x_{s+1}^{k_{s+1}}u^{k_u+i-1})\\
+\frac{(-1)^{t-k_u}k_s!}{(k_s-k_u+t)!}x_s^{k_s-k_u+t}x_{n+s}^{k_{n+s}+k_u-t}x_{s+1}^{k_{s+1}}u^{t}
\end{array}
\end{equation}
for $k_u\leq t\leq k_u+k_{n+s}$,
\begin{equation}
\begin{array}{r}
x_s^{k_s}x_{n+s}^{k_{n+s}}x_{s+1}^{k_{s+1}}u^{k_u}=\eta(\sum\limits_{i=1}^{k_u-t}\frac{(-1)^{i-1}k_s!}{(k_s-i+1)!}x_s^{k_s-i+1}x_{n+s}^{k_{n+s}+i-1}x_{s+1}^{k_{s+1}}u^{k_u-i})\\
+\frac{(-1)^{t-k_u}k_s!}{(k_s-k_u+t)!}x_s^{k_s-k_u+t}x_{n+s}^{k_{n+s}+k_u-t}x_{s+1}^{k_{s+1}}u^{t}
\end{array}
\end{equation}
for $1\leq t<k_u$. Thus
\begin{equation}
\begin{array}{ll}
&(\sum\limits_{t=0}^{k_{n+s}+k_u}\frac{(-1)^{k_u-t}(k_s-k_u+t)!(k_{s+1}+n-s-1)!(k_{n+s}+k_u)!}{t!k_s!(k_{s+1}+n-s-1+t)!(k_{n+s}+k_u-t)!})x_s^{k_s}x_{n+s}^{k_{n+s}}x_{s+1}^{k_{s+1}}u^{k_u}\\
=&\sum\limits_{t=0}^{k_u-1}\frac{(-1)^{k_u-t}(k_s-k_u+t)!(k_{s+1}+n-s-1)!(k_{n+s}+k_u)!}{t!(k_{s+1}+n-s-1+t)!(k_{n+s}+k_u-t)!}\eta(\sum\limits_{i=1}^{k_u-t}\frac{(-1)^{i-1}}{(k_s-i+1)!}x_s^{k_s-i+1}x_{n+s}^{k_{n+s}+i-1}x_{s+1}^{k_{s+1}}u^{k_u-i})\\
&+\sum\limits_{t=k_u}^{k_{n+s}+k_u}\frac{(-1)^{k_u-t}(k_s-k_u+t)!(k_{s+1}+n-s-1)!(k_{n+s}+k_u)!}{t!(k_{s+1}+n-s-1+t)!(k_{n+s}+k_u-t)!}\eta(\sum\limits_{i=1}^{t-k_u}\frac{(-1)^{i-1}}{(k_s+i)!}x_s^{k_s+i}x_{n+s}^{k_{n+s}-i}x_{s+1}^{k_{s+1}}u^{k_u+i-1})\\
&+\sum\limits_{t=0}^{k_{n+s}+k_u}\frac{(k_{s+1}+n-s-1)!(k_{n+s}+k_u)!}{t!(k_{s+1}+n-s-1+t)!(k_{n+s}+k_u-t)!}
x_s^{k_s-k_u+t}x_{n+s}^{k_{n+s}+k_u-t}x_{s+1}^{k_{s+1}}u^t.
\end{array}
\end{equation}
Hence (3.26) holds.

Note
\begin{equation}
E_{i,j}-E_{n+j,n+i}\mid_{\cal A}=\left\{
\begin{array}{lll}
-x_j\partial_{x_i}-x_{n+j}\partial_{x_{n+i}} &{\rm if}& 1\leq
i<j\leq s,\\
\partial_{x_i}\partial_{x_j}-x_{n+j}\partial_{x_{n+i}} &{\rm if}& i\leq s<j\leq n,\\
x_i\partial_{x_j}-x_{n+j}\partial_{x_{n+i}} &{\rm if}& s<i<j\leq n,
\end{array}
\right.
\end{equation}
and
\begin{equation}
E_{i,n+j}-E_{j,n+i}\mid_{\cal A}=\left\{
\begin{array}{lll}
\partial_{x_i}\partial_{x_{n+j}}-\partial_{x_j}\partial_{x_{n+i}}
&{\rm if}& 1\leq i,j\leq s,\\
\partial_{x_i}\partial_{x_{n+j}}-x_j\partial_{x_{n+i}}
&{\rm if}& 1\leq i\leq s<j\leq n,\\
x_i\partial_{x_{n+j}}-x_j\partial_{x_{n+i}} &{\rm if}& s<i,j\leq n.
\end{array}
\right.
\end{equation}
So ${\cal A}$ is again nilpotent with respect to ${\cal G}_+$. If
$k\leq-(n-s-1)$, ${\cal H}_{\la k\ra}$ is irreducible by lemma 2.3.
Since $x_s^{-k}\notin\eta{\cal A}_{\la k-2\ra}$, we get ${\cal
H}_{\la k\ra}\bigcap\eta{\cal A}_{\la k-2\ra}=0$.

By Lemma 3.1 with ${\cal T}_1=\partial_{x_n}\partial_{x_{2n}}$,
${\cal T}_2=\Delta-{\cal T}_1$ and ${\cal T}_1^-$ in the above of
Theorem 3.4, we get (3.25) is a basis of ${\cal H}_{\la k\ra}$.
\end{proof}

\section{Noncanonical Representation of $so(2n+1,\mathbb{C})$}
\setcounter{equation}{0} \setcounter{atheorem}{0} This section is
devoted to the noncanonical polynomial representation of the Lie
algebra
\begin{equation}
\begin{array}{c}
\mathcal{G}=so(2n+1,\mathbb{C})=\sum\limits_{i,j=1}^n\mathbb{C}(E_{i+1,j+1}-E_{n+j+1,n+i+1})+\\
\sum\limits_{1\leq i<j\leq
n}[\mathbb{C}(E_{i+1,n+j+1}-E_{j+1,n+i+1})+\mathbb{C}(E_{n+j+1,i+1}-E_{n+i+1,j+1})]\\
+\sum\limits_{i=1}^n[\mathbb{C}(E_{i+1,1}-E_{1,n+i+1})+\mathbb{C}(E_{1,i+1}-E_{n+i+1,1})].
\end{array}
\end{equation}
Pick a Cartan subalgebra
\[
H=\sum\limits_{i=1}^n\mathbb{C}(E_{i+1,i+1}-E_{n+i+1,n+i+1}).
\]
Take $\{E_{i+1,j+1}-E_{n+j+1,n+i+1},E_{i+1,n+j+1}-E_{j+1,n+i+1}\mid
1\leq i<j\leq n\}$ and $\{E_{i+1,1}-E_{1,n+i+1}\mid
i\in\overline{1,n}\}$ as positive root vectors, which span a Lie
subalgebra ${\cal G}_+$. Set
$h_i=E_{i+1,i+1}-E_{n+i+1,n+i+1}-E_{i+2,i+2}-E_{n+i+2,n+i+2}$ for
$i\in\overline{1,n-1}$ and $h_n=2(E_{n+1,n+1}-E_{2n+1,2n+1})$. The
fundamental weights $\lambda_i,i\in\overline{1,n}$ are linear
functions on $H$ such that $\lambda_i(h_j)=\delta_{i,j}$. Let $S,T$
be a partition of $\overline{1,2n+1}$, we can get a representation
of $so(2n+1,\mathbb{C})$ on ${\cal
A}=\mathbb{C}[x_1\cdots,x_{2n+1}]$ via (2.1). We can always assume
$1\in S$ by symmetry. Set
\begin{equation}
\begin{array}{c}
S_1=\{i\in \overline{2,n+1}|\ \ i\in S,\ \ n+i\in S\},\\
S_2=\{i\in \overline{2,n+1}|\ \ i\in S,\ \ n+i\in T\},\\
T_1=\{i\in \overline{2,n+1}|\ \ i\in T,\ \ n+i\in T\},\\
T_2=\{i\in \overline{2,n+1}|\ \ i\in T,\ \ n+i\in S\},
\end{array}
\end{equation}
and
\begin{equation}
\Delta=\partial_{x_1}^2+2(\sum\limits_{i\in
S_1}\partial_{x_i}\partial_{x_{n+i}}+ \sum\limits_{i\in
T_1}x_ix_{n+i}-\sum\limits_{i\in
S_2}x_{n+i}\partial_{x_i}-\sum\limits_{i\in
T_2}x_i\partial_{x_{n+i}}),
\end{equation}
\begin{equation}
\eta=x_1^2+2(\sum\limits_{i\in S_1}x_ix_{n+i}+\sum\limits_{i\in
T_1}\partial_{x_i}\partial_{x_{n+i}}+\sum\limits_{i\in
S_2}x_i\partial_{x_{n+i}} +\sum\limits_{i\in
T_2}x_{n+i}\partial_{x_i}).
\end{equation}
It is easy to see
\begin{equation}
\Delta g=g\Delta,\ \ \eta g=g\eta\ \ {\rm for}\ \ g\in
so(2n+1,\mathbb{C}),
\end{equation}
as operators on ${\cal A}$. We still set
\[{\cal A}_{\la k\ra}=\mbox{Span}\:\{x^\alpha\in {\cal A}|\sum\limits_{i\in
S}\alpha_i-\sum\limits_{i\in T}\alpha_i=k\},\]
\[{\cal H}_{\la k\ra}=\{f\in {\cal A}_{\la k\ra}|\Delta(f)=0\}.\]

{\bf Case 1}. $T=\overline{2,n+1}$\\

In this case, we have
\[\Delta=\partial^{2}_{x_1}-2\sum\limits_{i=1}^{n}x_{i+1}\partial_{x_{n+i+1}},\]
\[\eta=x_1^2+2\sum\limits_{i=1}^{n}x_{n+i+1}\partial_{x_{i+1}}.\]
Since $so(2n+1,\mbb{C}))$ contains $so(2n,\mbb{C})$ as a subalgebra,
any singular vector is of the form $f(x_1,x_{n+1},x_{2n+1})$ by
Lemma 3.2. Let $f(x_1,x_{n+1},x_{2n+1})\in {\cal A}$ be a singular
vector. Observe
\begin{equation}
(E_{n+1,1}-E_{1,2n+1})(f)=(\partial_{x_{n+1}}\partial_{x_1}-x_1\partial_{x_{2n+1}})(f)=0,
\end{equation}
Using lemma 3.1 with $B=\mathbb{C}[x_1,x_{n+1}],\;
V_r=\mbox{Span}\:\{x_{2n+1}^{\alpha_{2n+1}}\mid \alpha_{2n+1}\leq
r\}, \;{\cal T}_1=\ptl_{x_{n+1}}\ptl_{x_1},\;{\cal
T}_2=-x_1\ptl_{x_{2n+1}}$ and $${\cal
T}_1^-(x^\alpha)=\frac{x_1x_{n+1}x^\alpha}{(\alpha_1+1)(\alpha_{n+1}+1)},$$
we obtain

\begin{lem}All singular vectors in ${\cal A}$ are
$f_{p,q}=\sum\limits_{i=0}^q
\frac{q!p!!}{i!(q-i)!(p+2q-2i)!!}x_1^{p+2q-2i}x_{n+1}^{q-i}x_{2n+1}^i$
($p$ is odd, $p,q\in\mathbb{N}$) and
$\eta^lx_{n+1}^{k_{n+1}}\;(l\geq 0,k_{n+1}\geq 0)$.\hfill$\Box$
\end{lem}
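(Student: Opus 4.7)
The plan is to eliminate most variables via an $so(2n,\mbb{C})$-subalgebra reduction, and then solve the residual first-order PDE using Lemma 3.1. First, the subalgebra of $\mathcal{G}$ spanned by $\{E_{i+1,j+1}-E_{n+j+1,n+i+1},\ E_{i+1,n+j+1}-E_{j+1,n+i+1}:1\le i<j\le n\}$ together with $H$ is isomorphic to $so(2n,\mbb{C})$ acting on $x_2,\dots,x_{2n+1}$, and its positive root vectors lie inside ${\cal G}_+$. Since $T\cap\ol{2,2n+1}=\ol{2,n+1}$, this matches Case 1 of Section 3 exactly; so any $\mathcal{G}$-singular vector is singular for this subalgebra, and Lemma 3.2 (together with the fact that this subalgebra does not act on $x_1$ at all, so $x_1$ may enter freely) forces $f=f(x_1,x_{n+1},x_{2n+1})$. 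The remaining positive root vectors of $\mathcal{G}$ are $E_{i+1,1}-E_{1,n+i+1}$ for $i\in\ol{1,n}$; for $i<n$ both summands involve partials in variables absent from $f$, so they vanish automatically on such $f$, leaving only (4.6), $(\ptl_{x_{n+1}}\ptl_{x_1}-x_1\ptl_{x_{2n+1}})f=0$.

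Next, apply Lemma 3.1 with $B=\mbb{C}[x_1,x_{n+1}]$, $V_r=\mbb{C}+\mbb{C}x_{2n+1}+\cdots+\mbb{C}x_{2n+1}^r$, ${\cal T}_1=\ptl_{x_{n+1}}\ptl_{x_1}$, ${\cal T}_2=-x_1\ptl_{x_{2n+1}}$, and right inverse ${\cal T}_1^-(x^\alpha)=x_1x_{n+1}x^\alpha/[(\alpha_1+1)(\alpha_{n+1}+1)]$. The kernel of ${\cal T}_1$ inside $B$ has basis $\{x_1^p:p\ge 0\}\cup\{x_{n+1}^r:r\ge 1\}$, so the spanning set produced by the lemma consists of $\sum_i(-{\cal T}_1^-{\cal T}_2)^i(x_1^px_{2n+1}^q)$ and $\sum_i(-{\cal T}_1^-{\cal T}_2)^i(x_{n+1}^rx_{2n+1}^q)$. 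A finite induction on the iteration index---at each step the coefficient is multiplied by $(q-i+1)/[(p+2i)\cdot i]$, and one factor of $x_{2n+1}$ is converted into $x_1^2x_{n+1}$---identifies the first family, after reindexing $j\mapsto q-i$, with the closed form $f_{p,q}$.

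The crux is then matching the second family, together with the even-$p$ instances of the first, with $\{\eta^lx_{n+1}^{k_{n+1}}:l,k_{n+1}\ge 0\}$. For this, (4.5) gives $[\eta,g]=0$ for $g\in\mathcal{G}$, and a one-line check using (2.1) shows that $x_{n+1}^{k_{n+1}}$ is annihilated by every positive root vector (each such vector is either a product of partials in variables absent from $x_{n+1}^{k_{n+1}}$, or contains a factor $\ptl_{x_1}$, which kills it); hence every $\eta^lx_{n+1}^{k_{n+1}}$ is singular. A direct binomial-type expansion of $\eta^lx_{n+1}^{k_{n+1}}$, compared bidegree-by-bidegree with the even-$x_1$-parity output of Lemma 3.1, completes the identification. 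The principal obstacle is precisely this closed-form bookkeeping with double factorials and binomial coefficients; the remainder reduces to the already-established machinery.
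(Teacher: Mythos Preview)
Your proof is correct and follows essentially the same route as the paper: reduce to $f=f(x_1,x_{n+1},x_{2n+1})$ via the $so(2n,\mathbb{C})$-subalgebra (Lemma 3.2), isolate the single remaining equation (4.6), and solve it with Lemma 3.1 using exactly the same choices of $B$, $V_r$, ${\cal T}_1$, ${\cal T}_2$, ${\cal T}_1^-$. The paper simply writes ``we obtain'' after setting up Lemma 3.1, whereas you supply the explicit identification of the odd-$p$ output with $f_{p,q}$ and of the even-parity output with $\eta^l x_{n+1}^{k_{n+1}}$; this extra bookkeeping is correct and fills in detail the paper leaves to the reader.
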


Note
$$\Delta(\eta^lx_{n+1}^{k_{n+1}})=0\Leftrightarrow l=0. $$ Since
\[
\begin{array}{lll}
\Delta(f_{p,q})&=&\sum\limits_{i=0}^q\frac{q!p!!(p+2q-2i-1)}{i!(q-i)!(p+2q-2i-2)!!}x_1^{p+2q-2i-2}x_{n+1}^{q-i}x_{2n+1}^i\\
&&-2\sum\limits_{i=1}^q\frac{q!p!!}{(i-1)!(q-i)!(p+2q-2i)!!}x_1^{p+2q-2i}x_{n+1}^{q-i+1}x_{2n+1}^{i-1}\\
&=&\sum\limits_{i=0}^q\frac{q!p!!(p-1)}{i!(q-i)!(p+2q-2i-2)!!}x_1^{p+2q-2i-2}x_{n+1}^{q-i}x_{2n+1}^i,
\end{array}
\]
we have
\[
\Delta(f_{p,q})=0\Leftrightarrow p=1.
\]
Therefore, ${\cal H}_{\la k\ra}$ has only one singular vector
($f_{1,k-1}$ if $k>0$ and $x_{n+1}^{-k}$ if $k\leq 0$). Observe
\begin{equation}
E_{i+1,j+1}-E_{n+j+1,n+i+1}\mid_{\cal
A}=-x_{j+1}\partial_{x_{i+1}}-x_{n+j+1}\partial_{x_{n+i+1}},
\end{equation}
\begin{equation}
E_{i+1,n+j+1}-E_{j+1,n+i+1}\mid_{\cal
A}=\partial_{x_{i+1}}\partial_{x_{n+j+1}}-\partial_{x_{j+1}}\partial_{x_{n+i+1}},
\end{equation}
for $1\leq i<j\leq n$, and
\begin{equation}
E_{i+1,1}-E_{1,n+i+1}\mid_{\cal
A}=\partial_{x_{i+1}}\partial_{x_1}-x_1\partial_{x_{n+i+1}}
\end{equation}
if $1\leq i\leq n$. So ${\cal A}$ is  nilpotent with respect to
${\cal G}_+$. Hence the subspace ${\cal H}_{\la k\ra}$ is
irreducible by the analogue of Lemma 2.3 for $so(2n+1,\mbb{C})$.

By the similar argument as in theorem 3.4 and 3.5, we can get ${\cal
A}_{\la k\ra}={\cal H}_{\la k\ra}+\eta{\cal A}_{\la k-2\ra}$
($k\in\mathbb{Z}$).

Since $x_{n+1}^{-k}\notin\eta{\cal A}_{\la k-2\ra}$ when $k\leq0$
and $x_1\notin\eta{\cal A}_{-1}$, we have ${\cal H}_{\la
k\ra}\bigcap\eta{\cal A}_{\la k-2\ra}=0$ for $k\leq1$. Now assume
${\cal H}_{\la k\ra}\bigcap\eta{\cal A}_{\la k-2\ra}=0$ when $k\leq
k_0$ for some $k_0\geq1$. Note
\begin{equation}
\Delta\eta=\eta\Delta+4x_1\partial_{x_1}+4\sum\limits_{i=1}^n(x_{n+i+1}\partial_{x_{n+i+1}}-x_{i+1}\partial_{x_{i+1}})+2.
\end{equation}
So
\begin{equation}
\Delta(\eta^ig)=2i(2k_0-2i+1)\eta^{i-1}g\neq0
\end{equation}
for $i\geq1$ and $0\neq g\in{\cal H}_{\la k_0-2i+1\ra}$. Hence
${\cal H}_{\la k_0+1\ra}\bigcap\eta{\cal A}_{\la k_0-1\ra}=0$.
Therefore, ${\cal H}_{\la k\ra}\bigcap\eta{\cal A}_{\la k-2\ra}=0$
holds for all $k\in\mathbb{Z}$.

Using Lemma 3.1 with ${\cal T}_1=\partial_{x_1}^2,{\cal
T}_2=\Delta-{\cal T}_1$ and
$${\cal T}_1^-(x^\alpha)=\frac{x_1^2x^\alpha}{(\alpha_1+1)(\alpha_1+2)},$$
we obtain

\begin{thm} The space
${\cal A}_{\la k\ra}={\cal H}_{\la k\ra}\bigoplus \eta {\cal A}_{\la
k-2\ra}$. Moreover, ${\cal H}_{\la k\ra}$ is an irreducible highest
weight module with highest weight $(k-1)\lambda_{n-1}-2k\lambda_n$
(resp. $-k\lambda_{n-1}+2(k-1)\lambda_n$), a corresponding highest
weight vector is $f_{1,k-1}$ (resp. $x_{n+1}^{-k}$) when $k>0$
(resp. $k\leq0$) and
\begin{displaymath}
\begin{array}{l}
\{\sum\limits_{r_2,\cdots,r_{n+1}=0}^\infty\frac{2^{r_2+\cdots+r_{n+1}}\epsilon!(r_2+\cdots+r_{n+1})!\prod\limits_{i=1}^n\left(
\begin{array}{c}
\alpha_{n+i+1}\\
r_{i+1}
\end{array}
\right)}{(\epsilon+2\sum\limits_{i=1}^nr_{i+1})!}x_1^{\epsilon+2\sum\limits_{i=1}^nr_{i+1}}\prod\limits_{i=1}^nx_{i+1}^{\alpha_{i+1}+r_{i+1}}x_{n+i+1}^{\alpha_{n+i+1}-r_{i+1}}\\
| \epsilon\in\{0,1\};\alpha_2\cdots,\alpha_{2n+1}\in
\mathbb{N},\epsilon+\sum\limits_{i=1}^n\alpha_{n+i+1}-\alpha_{i+1}=k\}
\end{array}
\end{displaymath}
forms a basis of ${\cal H}_{\la k\ra}\;(k\in \mathbb{Z})$.
\end{thm}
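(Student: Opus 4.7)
The plan is to assemble the four assertions of Theorem~4.2 --- irreducibility of ${\cal H}_{\la k\ra}$, its highest-weight structure, the direct-sum decomposition, and the explicit basis --- from the tools already set up just above the statement. First, for irreducibility and the identification of the highest-weight vector, I would combine Lemma~4.1 with the explicit computations of $\Delta(f_{p,q})$ and $\Delta(\eta^l x_{n+1}^{k_{n+1}})$ to see that ${\cal H}_{\la k\ra}$ contains a unique singular vector (namely $f_{1,k-1}$ for $k>0$ and $x_{n+1}^{-k}$ for $k\leq 0$). The formulas (4.7)--(4.9) show that ${\cal G}_+|_{\cal A}$ is locally nilpotent and preserves the degree filtration, so ${\cal A}$ is nilpotent with respect to ${\cal G}_+$. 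A short check that the unique singular vector $v$ pairs nontrivially with itself under the bilinear form of Lemma~2.2 then lets me invoke the analogue of Lemma~2.3 (via Remark~2.4) to conclude irreducibility, and the weights are read off by applying each $h_i$ to $v$.

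Second, for the inclusion ${\cal A}_{\la k\ra}\subset {\cal H}_{\la k\ra}+\eta{\cal A}_{\la k-2\ra}$, I would mimic the argument of Theorem~3.5: by induction on total degree, I exhibit any monomial $x^\alpha\in {\cal A}_{\la k\ra}$ as an element of ${\cal H}_{\la k\ra}$ plus $\eta$ applied to a polynomial of strictly smaller degree, exploiting the commutator identity
\begin{equation}
\Delta\eta = \eta\Delta+4x_1\ptl_{x_1}+4\sum_{i=1}^n(x_{n+i+1}\ptl_{x_{n+i+1}}-x_{i+1}\ptl_{x_{i+1}})+2
\end{equation}
to trade powers of $\eta$ against harmonic pieces. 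To prove directness I would argue by induction on $k$: the base cases $k\leq 1$ are immediate since $x_{n+1}^{-k}\notin\eta{\cal A}_{\la k-2\ra}$ for $k\leq 0$ and $x_1\notin \eta{\cal A}_{-1}$ for $k=1$. For the inductive step, any element of ${\cal H}_{\la k_0+1\ra}\cap \eta{\cal A}_{\la k_0-1\ra}$ can be written using the inductive hypothesis as $\sum_{i\geq 1}\eta^i g_i$ with $g_i\in{\cal H}_{\la k_0-2i+1\ra}$; the commutator identity then gives $\Delta(\eta^i g_i)=2i(2k_0-2i+1)\eta^{i-1}g_i$, all nonzero, forcing each $g_i=0$.

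Third, to extract the explicit basis I would apply Lemma~3.1 with $B=\mathbb{C}[x_1]$, the filtration $V_r=\mbox{Span}\{x^\alpha\mid \alpha_1=0,\ \sum_{i\geq 2}\alpha_i\leq r\}$, operators ${\cal T}_1=\ptl_{x_1}^2$ and ${\cal T}_2=\Delta-{\cal T}_1=-2\sum_{i=1}^n x_{i+1}\ptl_{x_{n+i+1}}$, and right inverse ${\cal T}_1^-(x^\alpha)=\frac{x_1^2 x^\alpha}{(\alpha_1+1)(\alpha_1+2)}$. Since $\ker {\cal T}_1|_B=\mathbb{C}\cdot 1\oplus \mathbb{C}\cdot x_1$, the kernel contributes the parameter $\epsilon\in\{0,1\}$ in the displayed basis. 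Expanding $\sum_{r\geq 0}(-{\cal T}_1^-{\cal T}_2)^r$ acting on $\prod_{i=1}^n x_{i+1}^{\alpha_{i+1}}x_{n+i+1}^{\alpha_{n+i+1}}\cdot x_1^\epsilon$ and grouping the $r$ applications of ${\cal T}_2$ according to which index $i$ was differentiated then yields the formula.

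The main obstacle will be precisely the combinatorial bookkeeping in the last step: matching the coefficient $\frac{2^{\sum r_{i+1}}\epsilon!(\sum r_{i+1})!}{(\epsilon+2\sum r_{i+1})!}\prod_{i=1}^n\binom{\alpha_{n+i+1}}{r_{i+1}}$ requires tracking three distinct contributions --- the $2^{\sum r_{i+1}}$ from the scalar in ${\cal T}_2$, the binomial $\binom{\alpha_{n+i+1}}{r_{i+1}}$ arising when $r_{i+1}$ differentiations land on the $(n+i+1)$-th variable, the multinomial $(\sum r_{i+1})!/\prod r_{i+1}!$ counting the orderings of those choices, and finally the telescoping product of the denominators $(\alpha_1+1)(\alpha_1+2)\cdots$ produced by successive applications of ${\cal T}_1^-$, which collapse to $\epsilon!/(\epsilon+2\sum r_{i+1})!$. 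Once this coefficient is verified, Lemma~3.1 directly yields that the displayed set spans ${\cal H}_{\la k\ra}$, and linear independence is immediate from the distinctness of the underlying monomials.
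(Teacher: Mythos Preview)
Your proposal is correct and follows essentially the same route as the paper: the paper likewise combines Lemma~4.1 with the $\Delta$-computations to isolate the unique singular vector, invokes (4.7)--(4.9) for nilpotency and the analogue of Lemma~2.3 for irreducibility, refers to the arguments of Theorems~3.4--3.5 for the sum ${\cal A}_{\la k\ra}={\cal H}_{\la k\ra}+\eta{\cal A}_{\la k-2\ra}$, uses precisely your commutator identity (it is the paper's (4.10)) and the formula $\Delta(\eta^i g)=2i(2k_0-2i+1)\eta^{i-1}g$ (its (4.11)) for the inductive directness, and applies Lemma~3.1 with exactly your choices ${\cal T}_1=\ptl_{x_1}^2$, ${\cal T}_2=\Delta-{\cal T}_1$, ${\cal T}_1^-(x^\alpha)=x_1^2x^\alpha/((\alpha_1+1)(\alpha_1+2))$ to obtain the basis. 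One small wording point: in the directness step the conclusion you actually get from the direct-sum hypothesis at level $k_0-1$ is $\eta^{i-1}g_i=0$ (hence $\eta^i g_i=0$), not literally $g_i=0$; this suffices and is what the paper uses.
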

\hfill\\

{\bf Case 2.} $1\in S,\ \ \overline{2,n+1}=T_2\bigcup S_1$\\

We can assume $T_2=\overline{2,s+1}(1\leq s<n)$ by symmetry. Note
$$\Delta=\partial^{2}_{x_1}-2\sum\limits_{i=1}^{s}x_{i+1}\partial_{x_{n+i+1}}+2\sum\limits_{j=s+1}^n\partial_{x_{j+1}}\partial_{x_{n+j+1}},$$
$$\eta=x_1^2+2\sum\limits_{i=1}^{s}x_{n+i+1}\partial_{x_{i+1}}+2\sum\limits_{j=s+1}^nx_{j+1}x_{n+j+1}.$$
Denote
$$v=x_1^2+2\sum\limits_{j=s+1}^nx_{j+1}x_{n+j+1}.$$
Let $f\in {\cal A}$ be a singular vector. Set
\[
\begin{array}{l}
 \mathcal
{G}_1=\sum\limits_{i,j=1}^s\mathbb{C}(E_{i+1,j+1}-E_{n+j+1,n+i+1})+\sum\limits_{1\leq
i<j\leq
s}[\mathbb{C}(E_{i+1,n+j+1}-E_{j+1,n+i+1})\\+\mathbb{C}(E_{n+j+1,i+1}-E_{n+i+1,j+1})],\\
\mathcal
{G}_2=\sum\limits_{i,j=s+1}^n\mathbb{C}(E_{i+1,j+1}-E_{n+j+1,n+i+1})+\sum\limits_{s+1\leq
i<j\leq
n}[\mathbb{C}(E_{i+1,n+j+1}-E_{j+1,n+i+1})\\+\mathbb{C}(E_{n+j+1,i+1}-E_{n+i+1,j+1})].
\end{array}
\]
Denote ${\cal A}_1=\mbb{C}[x_2,...,x_{s+1},x_{n+2},...,x_{n+s+1}]$
and ${\cal
A}_2=\mbb{C}[x_{s+2},...,x_{n+1},x_{n+s+2},...,x_{2n+1}]$. Then
${\cal A}={\cal A}_1{\cal A}_2[x_1]$. Note that any singular vector
of ${\cal G}_1|_{{\cal A}_1}$ is of the form
$g_1(x_{s+1},x_{n+s+1})$ by Lemma 3.2. Moreover, ${\cal G}_2|_{{\cal
A}_2}$ is the canonical polynomial representation of
$so(2(n-s),\mbb{C})$, whose singular vectors are known to be of the
form $g_2(x_{s+2},v)$. Observe that $f$ can be viewed as  a singular
vector of ${\cal G}_1|_{\cal A}$ and a singular vector of ${\cal
G}_2|_{\cal A}$. Thus we can write
$f=f(x_1,x_{s+1},x_{s+2},x_{n+s+1},v)$ by the above facts. Observe
$$(E_{s+2,1}-E_{1,n+s+2})(f)=x_{s+2}\partial_{x_1}(f)=0,$$
that is
$$\partial_{x_1}(f)=0.$$ Moreover,
$$(E_{s+1,1}-E_{1,n+s+1})(f)=2x_1\frac{\partial^2f}{\partial
v\partial x_{s+1}}-x_1\frac{\partial f}{\partial x_{n+s+1}}=0,$$
equivalently,
$$(2\partial_v\partial_{x_{s+1}}-\partial_{x_{n+s+1}})(f)=0.$$
Furthermore,
$$(E_{s+1,s+2}-E_{n+s+2,n+s+1})(f)=\partial_{x_{s+1}}\partial_{x_{s+2}}(f)=0,$$
that is, $\partial_{x_{s+2}}(f)=0$ or $\partial_{x_{s+1}}(f)=0.$

Defining
\[
\int_{(z)}z^t=\frac{z^{t+1}}{t+1},
\]
  we get,
\begin{displaymath}
\begin{array}{lll}
f&=&\sum\limits_{t=0}^\infty
(\frac{1}{2}\int_{(x_{s+1})}\int_{(v)})^tx_{s+1}^p\partial_{x_{n+s+1}}^tx_{n+s+1}^m\\
&=&\sum\limits_{t=0}^m\frac{p!m!}{2^t(p+t)!t!(m-t)!}x_{s+1}^{p+t}v^tx_{n+s+1}^{m-t}\\
&=&\frac{p!}{2^m(p+m)!}(v+2x_{n+s+1}\partial_{x_{s+1}})^mx_{s+1}^{p+m},\\
&=&\frac{p!}{2^m(p+m)!}\eta^mx_{s+1}^{p+m},(m>0),
\end{array}
\end{displaymath}
or
\begin{displaymath}
\begin{array}{lll}
f&=&\sum\limits_{t=0}^\infty
(\frac{1}{2}\int_{(x_{s+1})}\int_{(v)})^tv^p\partial_{x_{n+s+1}}^tx_{n+s+1}^m\\
&=&\sum\limits_{t=0}^m\frac{p!m!}{2^t(p+t)!t!(m-t)!}x_{s+1}^tv^{p+t}x_{n+s+1}^{m-t}\\
&=&\frac{p!}{2^m(p+m)!}(v+2x_{n+s+1}\partial_{x_{s+1}})^{p+m}x_{s+1}^m,\\
&=&\frac{p!}{2^m(p+m)!}\eta^{p+m}x_{s+1}^m,(m>0),
\end{array}
\end{displaymath}
or
$$f=v^lx_{s+2}^{k_{s+2}}=\eta^lx_{s+2}^{k_{s+2}}(l\geq0,k_{s+2}\geq0).$$
Hence all the singular vectors in ${\cal A}_{\la k\ra}$ are of the
form $\eta^lx_{s+1}^{k-2l}$ or $\eta^lx_{s+2}^{2l-k}$. Note
\[
\Delta(\eta^lx_{s+1}^{k_{s+1}})\neq 0,\ \
\Delta(\eta^lx_{s+2}^{k_{s+2}})\neq0,\ \ {\rm if}\ \ l>0.
\]
Therefore, ${\cal H}_{\la k\ra}$ contains only one singular vector
($x_{s+2}^k$ if $k\geq0$; $x_{s+1}^{-k}$ if $k<0$). Similarly, by
Lemma 3.1, we get

\begin{thm}
The space ${\cal A}_{\la k\ra}={\cal H}_{\la k\ra}\bigoplus \eta
{\cal A}_{\la k-2\ra},$ and ${\cal H}_{\la k\ra}$ is irreducible
with basis
\begin{displaymath}
\begin{array}{l}
\{\sum\limits_{r_2,\cdots,r_{n+1}=0}^\infty\frac{(-1)^{r_{s+2}+\cdots+r_{n+1}}2^{r_2+\cdots+r_{n+1}}\epsilon!(r_2+\cdots+r_{n+1})!
\prod\limits_{j=s+1}^n\left(
\begin{array}{c}
\alpha_{j+1}\\
r_{j+1}
\end{array}
\right)r_{j+1}! \prod\limits_{i=1}^n\left(
\begin{array}{c}
\alpha_{n+i+1}\\
r_{i+1}
\end{array}
\right)}{(\epsilon+2\sum\limits_{i=1}^nr_{i+1})!}\\
\times
x_1^{\epsilon+2\sum\limits_{i=1}^nr_{i+1}}\prod\limits_{i=1}^sx_{i+1}^{\alpha_{i+1}+r_{i+1}}x_{n+i+1}^{\alpha_{n+i+1}-r_{i+1}}
\prod\limits_{j=s+1}^nx_{j+1}^{\alpha_{j+1}-r_{j+1}}x_{n+j+1}^{\alpha_{n+j+1}-r_{j+1}}
| \epsilon\in\{0,1\};\\
\alpha_2\cdots,\alpha_{2n+1}\in
\mathbb{N},\epsilon+\sum\limits_{i=s+2}^{2n+1}\alpha_i-\sum\limits_{i=2}^{s+1}\alpha_i=k\}
\end{array}
\end{displaymath}
If $k\geq0$, the highest weight of ${\cal H}_{\la k \ra}$ is
$-(k+1)\lambda_s+k\lambda_{s+1}$ when $s<n-1$, and
$-(k+1)\lambda_s+2k\lambda_{s+1}$ if $s=n-1$. When $k<0$, the
highest weight of ${\cal H}_{\la k \ra}$ is
$-k\lambda_{s-1}+(k-1)\lambda_s$. Moreover, $x_{s+2}^k$ is a
singular vector of ${\cal H}_{\la k \ra}$ if $k\geq0$, and
$x_{s+1}^{-k}$ is a singular vector of ${\cal H}_{\la k \ra}$ when
$k<0$.
\end{thm}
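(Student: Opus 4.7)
The plan is to adapt the strategy of Theorem~4.2 to the mixed partition $T=\{1\}^c\cap T_2$, $S=\{1\}\cup S_1$, proceeding in four coordinated steps. First, I would write out the explicit action of every positive root vector on ${\cal A}$, splitting into the three index regimes (both in $T_2$, mixed, both in $S_1$) as was done in (4.7)--(4.9). Each resulting operator is of one of the types $-x\ptl-x\ptl$, $\ptl\ptl-x\ptl$, $x\ptl-x\ptl$, $\ptl\ptl-\ptl\ptl$, or $\ptl\ptl-x\ptl$, so all elements of ${\cal G}_+|_{\cal A}$ are locally nilpotent and preserve the degree filtration; hence ${\cal A}$ is nilpotent with respect to ${\cal G}_+$. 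Combined with the singular-vector classification already established in the paragraphs preceding the theorem (uniqueness of the singular vector in ${\cal H}_{\la k\ra}$), the $so(2n{+}1,\mbb{C})$-analogue of Lemma~2.3 then delivers irreducibility of ${\cal H}_{\la k\ra}$.

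Second, I would establish the decomposition ${\cal A}_{\la k\ra}={\cal H}_{\la k\ra}+\eta{\cal A}_{\la k-2\ra}$. Using the $U({\cal G}_1)\cdot U({\cal G}_2)$-action it suffices to reduce an arbitrary monomial to one in the five distinguished variables $x_1, x_{s+1}, x_{s+2}, x_{n+s+1}, v$, then produce an explicit identity of the shape (3.31): apply $\eta$ to a carefully weighted sum of lower monomials and $\Delta$ to an explicit $\Delta$-harmonic combination, equating the resulting expressions to extract $x^\alpha$ modulo $\eta{\cal A}_{\la k-2\ra}$. For disjointness ${\cal H}_{\la k\ra}\cap\eta{\cal A}_{\la k-2\ra}=0$, I would compute the commutator
\[
[\Delta,\eta]=4x_1\ptl_{x_1}+4\sum_{i=1}^n\bigl(x_{n+i+1}\ptl_{x_{n+i+1}}-x_{i+1}\ptl_{x_{i+1}}\bigr)+2
\]
(the analogue of (4.10)), which yields $\Delta(\eta^ig)=2i(2k-2i+1)\eta^{i-1}g$ for $0\neq g\in{\cal H}_{\la k-2i+1\ra}$. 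Starting from $k\leq 0$, where the unique singular vector $x_{s+1}^{-k}$ lies outside $\eta{\cal A}_{\la k-2\ra}$ on degree grounds, the nonvanishing of the coefficient $2i(2k-2i+1)$ propagates disjointness to all $k\in\mbb{Z}$ by induction, exactly as in Case~1 of Section~4.

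Third, I would invoke Lemma~3.1 with ${\cal T}_1=\ptl_{x_1}^2$, ${\cal T}_2=\Delta-{\cal T}_1$, base algebra $B=\mbb{C}[x_2,\dots,x_{2n+1}]$, $V_r=\mathrm{Span}\{x_1^j\mid j\leq r\}$, and right inverse ${\cal T}_1^-(x^\alpha)=x_1^2x^\alpha/((\alpha_1+1)(\alpha_1+2))$. The $B$-generators of $\ker({\cal T}_2|_B)$ are indexed by $(\alpha_2,\dots,\alpha_{2n+1})$ together with $\epsilon\in\{0,1\}$ for the parity of the $x_1$-exponent, and the prescription $\sum_{i\geq 0}(-{\cal T}_1^-{\cal T}_2)^i$ reproduces the displayed basis after a binomial bookkeeping of the $r_{i+1}$'s. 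The highest weight is then read off by evaluating $h_i$ on the singular vector: for $k\geq 0$ the vector $x_{s+2}^k$ has weight $-(k{+}1)\lmd_s+k\lmd_{s+1}$ when $s<n-1$, while for $s=n-1$ the expression $h_n=2(E_{n+1,n+1}-E_{2n+1,2n+1})$ contributes a factor of $2$ to the $\lmd_{s+1}$-coefficient, explaining the case split; for $k<0$ the singular vector $x_{s+1}^{-k}$ gives $-k\lmd_{s-1}+(k-1)\lmd_s$.

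The main obstacle is the explicit identity in the second step: constructing the precise rational coefficients that make a chosen linear combination $\Delta$-harmonic in the presence of both the $x_1^2$ summand in $\eta$ and the mixed $T_2$/$S_1$ terms. This is essentially the analogue of (3.31) but with an extra parity bookkeeping for $x_1$, and it is where the computation becomes most delicate; once this identity is in place, the remaining steps are direct transcriptions of arguments already appearing in Theorems~3.5 and~4.2.
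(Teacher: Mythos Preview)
Your plan matches the paper's approach essentially step for step: the paper treats this theorem by saying ``Similarly, by Lemma~3.1, we get\ldots'' after the singular-vector analysis, so the intended proof is exactly the transcription of the Case~1 argument (Theorem~4.2) combined with the $\eta$-reduction machinery of Theorems~3.4--3.5, which is precisely what you outline.

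One computational correction: the commutator you quote is the Case~1 formula and is not quite right here. In Case~2 the $S_1$-summands $[2\partial_{x_{j+1}}\partial_{x_{n+j+1}},\,2x_{j+1}x_{n+j+1}]=4(x_{j+1}\partial_{x_{j+1}}+x_{n+j+1}\partial_{x_{n+j+1}}+1)$ contribute an extra constant, so
\[
[\Delta,\eta]=4\Big(\sum_{i\in S}x_i\partial_{x_i}-\sum_{i\in T}x_i\partial_{x_i}\Big)+2+4(n-s),
\]
which acts on ${\cal A}_{\la m\ra}$ as $4m+2+4(n-s)$. The resulting coefficient in $\Delta(\eta^i g)$ is therefore shifted by $2(n-s)$ relative to what you wrote; fortunately it is still odd, so your nonvanishing/induction argument for ${\cal H}_{\la k\ra}\cap\eta{\cal A}_{\la k-2\ra}=\{0\}$ goes through unchanged. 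With this fix your proposal is correct and coincides with the paper's (largely implicit) proof.
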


\section{Noncanonical Representation of $sl(n,\mathbb{C})$}
\setcounter{equation}{0} \setcounter{atheorem}{0}

In this section, we study
 the noncanonical representation $sl(n,\mbb{C})$ obtained from (1.9) by
 swapping some  $-x_r,-y_s$ and $\ptl_{x_r},\ptl_{y_s}$.

Recall \[ sl(n,\mathbb{C})=\sum\limits_{i,j=1,i\neq
j}^n\mathbb{C}E_{i,j}+\sum\limits_{i=1}^{n-1}\mathbb{C}(E_{i,i}-E_{i+1,i+1}),
\]
and
\[ H=\sum\limits_{i=1}^{n-1}\mathbb{C}(E_{i,i}-E_{i+1,i+1})\]
is a Cartan subalgebra of $sl(n,\mathbb{C})$. Recall the fundamental
weights $\lambda_1,\cdots,\lambda_{n-1}$ are linear functions on $H$
such that $\lambda_{i}(E_{j,j}-E_{j+1,j+1})=\delta_{i,j}$. Take
$\{E_{ij}|1\leq i<j\leq n\}$ as positive root vectors, which span a
Lie subalgebra ${\cal G}_+$. Recall
$\mathcal{B}=\mathbb{C}[x_1,\cdots,x_n,y_1,\cdots,y_n]$. Let $S,T$
be a partition of $\overline{1,n}$. Define a representation of
$sl(n,\mathbb{C})$ on $\mathcal{B}$ as follows:
\[
E_{i,j}|_\mathcal{B}=\left\{
\begin{array}{ll}
x_i\partial_{x_j}-y_j\partial_{y_i},&{\rm if}\ \ i,j\in S,\\
-x_ix_j-y_j\partial_{y_i},&{\rm if}\ \ i\in S,\ \ j\in T,\\
\partial_{x_i}\partial_{x_j}-y_j\partial_{y_i},&{\rm if}\ \ i\in T,\ \ j\in S,\\
-x_j\partial_{x_i}-\delta_{i,j}-y_j\partial_{y_i},&{\rm if}\ \ i,j\in T.\\
\end{array}
\right.
\]
Set
\[ \Delta=\sum\limits_{i\in S}\partial_{x_i}\partial_{y_i}-\sum\limits_{i\in T}
x_i\partial_{y_i}\] and
\[ \eta=\sum\limits_{i\in s}x_iy_i+\sum\limits_{i\in T}y_i\partial_{x_i}.\] Then
\[ \Delta\xi=\xi\Delta,\ \ \eta\xi=\xi\eta\ \ {\rm for}\ \ \xi\in
sl(n,\mathbb{C})\] as operators on ${\cal B}$. Denote
\[
\mathcal{B}_{l_1,l_2}=\mbox{Span}\:\{x^\alpha y^\beta\in
\mathcal{B}\mid\sum\limits_{i\in S}\alpha_i-\sum\limits_{i\in
T}\alpha_i=l_1,|\beta|=l_2\},
\]
for $l_1\in \mathbb{Z},l_2\in \mathbb{N}$, and
\[
{\cal H}_{l_1,l_2}=\{f\in \mathcal{B}_{l_1,l_2}\mid\Delta(f)=0\}.
\]
 Then
$\mathcal{B}_{l_1,l_2}$ and ${\cal H}_{l_1,l_2}$ are
$sl(n,\mathbb{C})$-submodules.

We can always assume $T=\overline{1,s}$ for some
$s\in\overline{1,n}$ by symmetry.\\

{\bf Case 1.} $s=n$. \\

In this case $\Delta=-\sum\limits_{i=1}^n x_i\partial_{y_i}$ and
$\eta=\sum\limits_{i=1}^ny_i\partial_{x_i}.$
\[
\mathcal{B}_{l_1,l_2}=Span\{x^\alpha y^\beta\in
\mathcal{B}\mid|\alpha|=-l_1,|\beta|=l_2\}
\]
is finite dimensional. The subspace ${\cal B}_{l_1,l_2}=0$ if
$l_1>0$.
\begin{lem}
All singular vectors in $\mathcal{B}_{l_1,l_2}$ are of the form
$x_n^{-l_1-t}y_n^{l_2-t}(x_{n-1}y_n-x_ny_{n-1})^t\;(t\geq 0)$.
\end{lem}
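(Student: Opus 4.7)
The plan is a two-stage characteristic-method argument. Because $T=\overline{1,n}$, every positive root vector acts on $\mathcal{B}$ as
\[
E_{i,j}|_{\mathcal{B}}=-(x_j\partial_{x_i}+y_j\partial_{y_i}),\qquad 1\leq i<j\leq n,
\]
so being a singular vector amounts to lying in the joint kernel of the commuting first-order operators $D_{i,j}:=x_j\partial_{x_i}+y_j\partial_{y_i}$. I would first exploit the $n-1$ operators $D_{i,n}$ to pass to a smaller invariant subring, then apply the remaining $D_{i,j}$ with $i<j<n$ to cut down further, and finally read off the allowed monomials from the bidegree constraints defining $\mathcal{B}_{l_1,l_2}$.

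For the first stage, set $u_i:=x_iy_n-x_ny_i$ for $i\in\overline{1,n-1}$. A direct check gives $D_{i,n}(x_n)=D_{i,n}(y_n)=D_{i,n}(u_k)=0$ for all $i,k\in\overline{1,n-1}$, so $\mathbb{C}[x_n,y_n,u_1,\ldots,u_{n-1}]\subseteq\bigcap_{i=1}^{n-1}\ker D_{i,n}$. For the reverse inclusion I would fix $i$ and expand $f\in\ker D_{i,n}$ by bidegree in $(x_i,y_i)$; the resulting recursion $(A+1)x_n g_{A+1,B}+(B+1)y_n g_{A,B+1}=0$ for the coefficients $g_{A,B}$ is the standard one for binary forms and forces the bidegree-$d$ piece of $f$ to equal $(-1)^d h_d\, u_i^d$ with scalar $h_d\in\mathbb{C}[\{x_k,y_k\}_{k\neq i}]$. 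Using the commutativity $[D_{i,n},D_{k,n}]=0$ to iterate this reduction over $i=1,\ldots,n-1$ yields $f=g(x_n,y_n,u_1,\ldots,u_{n-1})$ for some polynomial $g$.

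For the second stage, a short calculation gives $D_{i,j}(x_n)=D_{i,j}(y_n)=0$ and $D_{i,j}(u_k)=\delta_{i,k}u_j$ for $1\leq i<j\leq n-1$, so after the change of generators $D_{i,j}$ restricts to the operator $u_j\partial_{u_i}$ acting on $g$. Because $u_j$ is a nonzerodivisor in the polynomial ring $\mathbb{C}[x_n,y_n,u_1,\ldots,u_{n-1}]$ (these $n+1$ elements are algebraically independent since the $u_k$ involve pairwise disjoint pairs $(x_k,y_k)$), the equation $D_{i,j}(g)=0$ is equivalent to $\partial_{u_i}(g)=0$; taking $j=i+1$ for $i=1,\ldots,n-2$ wipes out any dependence on $u_1,\ldots,u_{n-2}$, leaving $g=g(x_n,y_n,u_{n-1})$.

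To finish, I would expand $g=\sum c_{a,b,t}\,x_n^ay_n^bu_{n-1}^t$ and impose the bidegree constraints $|\alpha|=-l_1$ and $|\beta|=l_2$ coming from the definition of $\mathcal{B}_{l_1,l_2}$. Since $x_n^ay_n^bu_{n-1}^t$ has total $x$-degree $a+t$ and total $y$-degree $b+t$, these force $a=-l_1-t$ and $b=l_2-t$, yielding exactly the listed family $x_n^{-l_1-t}y_n^{l_2-t}(x_{n-1}y_n-x_ny_{n-1})^t$. The main obstacle I anticipate is the polynomial-dependence claim in the first stage: the method of characteristics alone only tells us $f$ is rational in $x_n,y_n,u_1,\ldots,u_{n-1}$, and one must verify that polynomiality of $f$ in the original variables forces polynomiality in the invariants; this is precisely what the explicit bidegree recursion for $D_{i,n}$ delivers, and the inductive step piggybacks cleanly on the commutativity of the $D_{i,n}$.
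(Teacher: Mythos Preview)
Your proposal is correct and follows essentially the same characteristic-method route as the paper: introduce the invariants $u_i=x_iy_n-x_ny_i$, use the operators $E_{i,n}$ to reduce to functions of $x_n,y_n,u_1,\ldots,u_{n-1}$, then use $E_{i,j}$ with $i<j<n$ (which act as $u_j\partial_{u_i}$) to strip away $u_1,\ldots,u_{n-2}$. The only substantive difference is packaging: the paper performs a single rational change of coordinates to $(x_1,\ldots,x_n,u_1,\ldots,u_{n-1},y_n)$ up front, after which $E_{i,n}$ becomes $-x_n\partial_{x_i}$ and the reductions are immediate, whereas you work directly in the original variables and supply an explicit bidegree recursion to justify polynomiality in the invariants. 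Your extra care on this last point is well placed, since the paper simply asserts the rational rewriting and does not return to the polynomiality issue.
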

\begin{proof} Let $f\in \mathcal{B}_{l_1,l_2}$ be a singular vector.
Denote $u_i=x_iy_n-x_ny_i$ for $i\in\overline{1,n-1}$. We can
rewrite
\[
f=g(x_1,\cdots,x_n,u_1,\cdots,u_{n-1},y_n)
\]as a rational function in $x_1,\cdots,x_n,u_1,\cdots,u_{n-1},y_n$.
\[
E_{i,n}(g)=-x_n\partial_{x_i}(g)=0\ \ {\rm for}\ \
i\in\overline{1,n-1},
\]
equivalently,
\[
\partial_{x_i}(g)=0\ \ {\rm for}\ \ i\in\overline{1,n-1}.
\]
\[
E_{i,j}(g)=-(x_jy_n-y_jx_n)\partial_{u_i}(g)=0,\ \ {\rm for}\ \
1\leq i<j<n,
\]
which implies
\[ \partial_{u_i}(g)=0,\ \ {\rm for}\ \ 1\leq i\leq n-2.\]
Therefore, $g$ is independent of $x_1,\cdots,
x_{n-1},u_1,\cdots,u_{n-2}$.
\end{proof}
\vspace{0.2cm}

 Set
\[
{\cal H}'_{l_1,l_2}=\{f\in \mathcal{B}_{l_1,l_2}|\eta(f)=0\}.
\]
By the proof of Theorem 3.3, we get:\psp

\begin{thm} If $l_1+l_2\leq 0$,
$$\mathcal{B}_{l_1,l_2}={\cal H}_{l_1,l_2}\bigoplus\eta\mathcal{B}_{l_1-1,l_2-1}$$
(see the equation above Case 1), and ${\cal H}_{l_1,l_2}$ is an
irreducible highest weight module with the highest weight
$l_2\lambda_{n-2}-(l_1+l_2)\lambda_{n-1}$, a corresponding highest
weight vector $x_n^{-l_1-l_2}u_{n-1}^{l_2}$ and a basis
\begin{equation}
\begin{array}{l}
\{\prod\limits_{t=1}^n x_t^{k_t}\prod\limits_{1\leq i<j\leq
n}(x_iy_j-x_jy_i)^{k_{i,j}}|k_t,k_{i,j}\in \mathbb{N}; \ \sum
k_{i,j}=l_2,\  \sum k_t=-l_1-l_2;\\ k_{i,j}k_t=0\ \mbox{\it for} \
i<j<t;\ k_{i,j}k_{i_1,j_1}=0\ \mbox{for}\ i<i_1\ \mbox{\it and}\
j>j_1\}.
\end{array}
\end{equation}
 When $l_1+l_2>0$,
$$\mathcal{B}_{l_1,l_2}={\cal H}'_{l_1,l_2}\bigoplus\Delta\mathcal{B}_{l_1+1,l_2+1},$$
and ${\cal H}'_{l_1,l_2}$ is an irreducible highest weight module
with the highest weight $-l_1\lambda_{n-2}+(l_1+l_2)\lambda_{n-1}$
and a corresponding highest weight vector
$y_n^{l_1+l_2}u_{n-1}^{-l_1}$. The set
\begin{equation}
\begin{array}{l}
\{\prod\limits_{t=1}^n y_t^{k_t}\prod\limits_{1\leq i<j\leq
n}(x_iy_j-x_jy_i)^{k_{i,j}}|k_t,k_{i,j}\in \mathbb{N};\ \ \sum
k_{i,j}=-l_1,\ \sum\limits_{t=1}^n k_t=l_1+l_2;\\ k_{i,j}k_t=0\
\mbox{for} \ i<j<t;\  k_{i,j}k_{i_1,j_1}=0\ \mbox{for} \ i<i_1\
\mbox{\it and}\ j>j_1\}
\end{array}
\end{equation}
forms a basis of ${\cal H}'_{l_1,l_2}$.
\end{thm}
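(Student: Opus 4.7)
The strategy is to mirror the argument of Theorem 3.3, with the role of $(\Delta,\eta)$ there played here by the analogous operators $\Delta=-\sum_i x_i\ptl_{y_i}$, $\eta=\sum_i y_i\ptl_{x_i}$. First I would observe that $\mathcal{B}_{l_1,l_2}$ is finite-dimensional (its monomials satisfy $|\alpha|=-l_1$, $|\beta|=l_2$), and that the positive root vectors $E_{i,j}|_\mathcal{B}=-x_j\ptl_{x_i}-y_j\ptl_{y_i}$ ($i<j$ both in $T=\ol{1,n}$) merely shift mass from index $i$ to index $j$, hence are locally nilpotent. By Engel's theorem, $\mathcal{B}_{l_1,l_2}$ is then nilpotent with respect to ${\cal G}_+$, and the $sl(n,\mbb{C})$-analogue of Lemma 2.3 (with the natural extension of the form (2.2) to $\mathcal{B}$ by inserting a factor $\beta!$) is available.

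A direct calculation on the singular vectors catalogued in Lemma 5.1 yields
\[
\Delta(x_n^{-l_1-t}y_n^{l_2-t}u_{n-1}^t)=(t-l_2)\,x_n^{-l_1-t+1}y_n^{l_2-t-1}u_{n-1}^t,
\]
\[
\eta(x_n^{-l_1-t}y_n^{l_2-t}u_{n-1}^t)=-(l_1+t)\,x_n^{-l_1-t-1}y_n^{l_2-t+1}u_{n-1}^t,
\]
where $u_{n-1}=x_{n-1}y_n-x_n y_{n-1}$. Thus $\mathcal{H}_{l_1,l_2}$ contains the unique singular vector $v=x_n^{-l_1-l_2}u_{n-1}^{l_2}$ when $l_1+l_2\le 0$, and $\mathcal{H}'_{l_1,l_2}$ contains the unique singular vector $v'=y_n^{l_1+l_2}u_{n-1}^{-l_1}$ when $l_1+l_2>0$; the stated highest weights are read off from the Cartan action on these vectors. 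Irreducibility of $\mathcal{H}_{l_1,l_2}$ and $\mathcal{H}'_{l_1,l_2}$ then follows from the $sl(n,\mbb{C})$-version of Lemma 2.3 once one checks that $(v|v)$ and $(v'|v')$ are nonzero, which is immediate since each expands with a nonzero coefficient on a ``diagonal'' monomial (e.g.\ $x_n^{-l_1-l_2}x_{n-1}^{l_2}y_n^{l_2}$ in $v$).

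For the direct-sum decompositions, the identity $[\Delta,\eta]=-\sum_i(x_i\ptl_{x_i}-y_i\ptl_{y_i})$ shows $[\Delta,\eta]$ acts as the scalar $l_1+l_2$ on $\mathcal{B}_{l_1,l_2}$, so with $H|_{\mathcal{B}_{l_1,l_2}}=(l_1+l_2)\,\mathrm{id}$ the triple $(\Delta,\eta,H)$ satisfies the $sl_2$ relations, making $\bigoplus_{k\in\mbb{Z}}\mathcal{B}_{l_1+k,l_2+k}$ a graded $sl_2$-module with finite-dimensional homogeneous pieces. The standard decomposition of a finite-dimensional $sl_2$-module at a fixed weight then gives $\mathcal{B}_{l_1,l_2}=\mathcal{H}_{l_1,l_2}\oplus\eta\mathcal{B}_{l_1-1,l_2-1}$ when $l_1+l_2\le 0$ (the ``lower half'' where lowest-weight vectors sit) and, dually, $\mathcal{B}_{l_1,l_2}=\mathcal{H}'_{l_1,l_2}\oplus\Delta\mathcal{B}_{l_1+1,l_2+1}$ when $l_1+l_2>0$. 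The intersections vanish: a nonzero element of $\mathcal{H}_{l_1,l_2}\cap\eta\mathcal{B}_{l_1-1,l_2-1}$ would generate a proper submodule of the irreducible $\mathcal{H}_{l_1,l_2}$ and therefore a second singular vector, contradicting uniqueness.

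Finally, for the explicit basis, solving the first-order flag PDE $\Delta f=0$ (whose characteristic invariants are $x_1,\ldots,x_n$ and $u_{i,j}=x_iy_j-x_jy_i$) shows $\mathcal{H}_{l_1,l_2}$ is spanned by monomials $\prod_t x_t^{k_t}\prod_{i<j}u_{i,j}^{k_{i,j}}$ with $\sum k_t=-l_1-l_2$ and $\sum k_{i,j}=l_2$. I expect the main obstacle to be extracting from this redundant Pl\"ucker-type spanning set a genuine linear basis; my plan is to adapt verbatim the double induction (on $m$, via the subspaces $\mathcal{H}'_{\la k,m\ra}$ and a minimal-index argument on the leading Pl\"ucker exponents) used in the proof of Theorem 3.3, which shows that the ``standard monomial'' constraints $k_{i,j}k_t=0$ for $i<j<t$ and $k_{i,j}k_{i_1,j_1}=0$ for $i<i_1$, $j>j_1$ extract precisely a basis (5.1). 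The corresponding basis (5.2) for $\mathcal{H}'_{l_1,l_2}$ then follows from the involution swapping $x_i\leftrightarrow y_i$, which exchanges $\Delta$ and $\eta$.
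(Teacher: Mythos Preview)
Your proposal is correct and, for the most part, matches the paper's own treatment, which simply says ``by the proof of Theorem 3.3'' and gives no further details. The nilpotency of $\mathcal{B}_{l_1,l_2}$ with respect to ${\cal G}_+$, the computation isolating the unique singular vector in $\mathcal{H}_{l_1,l_2}$ (resp.\ $\mathcal{H}'_{l_1,l_2}$), the appeal to the $sl(n,\mathbb{C})$-analogue of Lemma 2.3, and the double induction for the standard-monomial basis are exactly the ingredients of Theorem 3.3 transported to this setting.

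The one genuine difference is in how you obtain the direct-sum decomposition. The paper's argument (in Theorem 3.3) works on finite-dimensional slices and applies rank--nullity to the endomorphism $\varphi=\Delta\eta$, combining $\ker\varphi\subset\mathcal{H}'$ and $\mathrm{Im}\,\varphi\subset\Delta(\cdot)$ with the separately established fact that the singular vector is not in the image to force the sum to be everything and direct. You instead observe that $(\eta,-\Delta,[\Delta,\eta])$ is an $sl_2$-triple acting on the finite-dimensional module $\bigoplus_k\mathcal{B}_{l_1+k,l_2+k}$ (finite because $-l_2\le k\le -l_1$), and then read off the weight-space decomposition $V_m=\ker f|_m\oplus e(V_{m-2})$ for $m\le 0$ from the classification of finite-dimensional $sl_2$-modules. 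Your route is more conceptual and gives the trivial intersection $\mathcal{H}_{l_1,l_2}\cap\eta\mathcal{B}_{l_1-1,l_2-1}=0$ for free, so your separate paragraph arguing this via irreducibility is redundant (though not wrong). The paper's approach has the minor advantage of being entirely self-contained, avoiding any appeal to $sl_2$ theory.
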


{\bf  Proof of Theorem 2.10 b)} \vspace{0.2cm}

 Identify $y_i$ with
$x_{n+i}$ for $i\in\overline{1,n}$ and $E_{i,j}$ with
$E_{i,j}-E_{n+j,n+i}$. We can view $sl(n,\mbb{C})$ as a subalgebra
of $sp(2n,\mbb{C})$. Set
\begin{equation}
{\cal A}_{\la 0
\ra}=\bigoplus\limits_{m\in\mathbb{N}}\mathcal{B}_{-m,m}
=\bigoplus\limits_{m\in\mathbb{N}}\bigoplus\limits_{l=0}^m\eta^l{\cal
H}_{-m-l,m-l} =\bigoplus\limits_{l,m\in\mathbb{N}}\eta^l{\cal
H}_{-m-2l,m}.
\end{equation}
We want to show that $$\eta^l{\cal H}_{-m-2l,m}\subset
U(sp(2n,\mbb{C})).1$$ if $m$ is even while
$$\eta^l{\cal H}_{-m-2l,m}\subset U(sp(2n,\mbb{C})).(x_{n-1}x_{2n}-x_nx_{2n-1})$$
if $m$ is odd. Hence ${\cal A}_{\la
0\ra}=U(sp(2n,\mbb{C})).1\bigoplus
U(sp(2n,\mbb{C})).(x_{n-1}x_{2n}-x_nx_{2n-1})$. Moreover, (2.17)
forms a basis of $U(sp(2n,\mbb{C})).1$ and (2.18) forms
$U(sp(2n,\mbb{C})).(x_{n-1}x_{2n}-x_nx_{2n-1})$.

We claim that if there exists a nonzero element
$f\in\eta^lH_{-m-2l,m}$ such that $f\in U(sp(2n,\mbb{C})).1$ or
$U(sp(2n)).(x_{n-1}x_{2n}-x_nx_{2n-1})$, then
$$\eta^l{\cal H}_{-m-2l,m}\subset U(sp(2n,\mbb{C})).1\ \ {\rm or}\ \
U(sp(2n,\mbb{C})).(x_{n-1}x_{2n}-x_nx_{2n-1})$$ because $\eta^l{\cal
H}_{-m-2l,m}$ is an irreducible $sl(n,\mbb{C})$-submodule. Since
$$\eta^lx_i^{2l}=\frac{(2l)!}{l!}x_i^lx_{n+i}^l\in U(sp(2n,\mbb{C})).1,$$
we have $\eta^l{\cal H}_{-2l,0}\subset U(sp(2n,\mbb{C})).1$.

Suppose $\eta^l {\cal H}_{-m-2l,m}\subset U(sp(2n,\mbb{C})).1$ for
some even integer $m\geq0$. Taking $0\neq f\in\eta^l {\cal
H}_{-m-2l,m}$, we get
\[0\neq(x_{n-1}x_{2n}-x_nx_{2n-1})^2f\in\eta^l {\cal H}_{-m-2-2l,m+2},\]
\[
\begin{array}{lll}&&
(x_{n-1}x_{2n}-x_nx_{2n-1})^2f\\ &=&(x_{n-1}x_{2n}+x_nx_{2n-1})^2f-4x_{n-1}x_nx_{2n-1}x_{2n}f\\
&=&-(E_{2n-1,n}+E_{2n,n-1})^2.f-4E_{2n-1,n-1}E_{2n,n}.f\in
U(sp(2n)).1.
\end{array}
\]
So $\eta^l {\cal H}_{-m-2-2l,m+2}\subset U(sp(2n,\mbb{C})).1$.
Therefore, $\eta^l{\cal H}_{-m-2l,m}\subset U(sp(2n,\mbb{C})).1$ if
$m$ is even, and $\eta^l{\cal H}_{-m-2l,m}\subset
U(sp(2n,\mbb{C})).(x_{n-1}x_{2n}-x_nx_{2n-1})$ if $m$ is odd by the
same arguments. This completes the proof of Theorem 2.10 b)
\vspace{0.5cm}

{\bf Case 2.} $s=n-1$.\\

Suppose $f\in\mathcal{B}$ is a singular vector. We rewrite
\[f=g(x_s,y_s,u,x_n,y_n)\] by Lemma 5.1, where $u=x_{s-1}y_s-x_sy_{s-1}$.
Note
\[E_{s-1,n}(g)=(y_s\partial_{x_n}+x_sy_n)\partial_u(g)=0,\]
which implies $\partial_u(g)=0$. Since
\[E_{s,n}(g)=(\partial_{x_s}\partial_{x_n}-y_n\partial_{y_s})(g)=0,\]
we have
\[g\in
\mbox{Span}\:\{\sum\limits_{t=0}^{\beta_s}\frac{\alpha_s!\alpha_n!\beta_s!}{(\alpha_s+t)!
(\alpha_n+t)!(\beta_s-t)!}x_s^{\alpha_s+t}x_n^{\alpha_n+t}y_s^{\beta_s-t}y_n^{\beta_n+t}|
\alpha_s,\alpha_n,\beta_s,\beta_n\in\mathbb{N},\alpha_s\alpha_n=0\}.
\]
\begin{lem} All singular vectors in $\mathcal{B}_{l_1,l_2}$
are
\[\{\sum\limits_{t=0}^{\beta_s}\frac{l_1!\beta_s!}{t!
(l_1+t)!(\beta_s-t)!}x_s^tx_n^{l_1+t}y_s^{\beta_s-t}y_n^{l_2-\beta_s+t}|0\leq\beta_s\leq
l_2\}\] if $l_1\geq0$, and
\[\{\sum\limits_{t=0}^{\beta_s}\frac{(-l_1)!\beta_s!}{t!
(-l_1+t)!(\beta_s-t)!}x_s^{-l_1+t}x_n^ty_s^{\beta_s-t}y_n^{l_2-\beta_s+t}|0\leq\beta_s\leq
l_2\}\] if $l_1<0$.\hfill$\Box$
\end{lem}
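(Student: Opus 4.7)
The plan is to simply intersect the general family of singular vectors displayed in the paragraph immediately preceding the lemma with the weight space $\mathcal{B}_{l_1,l_2}$; essentially all of the work has been done already. By Lemma 5.1, viewing $f$ as a singular vector for the natural $sl(n-1,\mbb{C})$-subaction (as in Case 1 of this section), $f$ must have the form $g(x_s,y_s,u,x_n,y_n)$ with $u=x_{s-1}y_s-x_sy_{s-1}$. The equation $E_{s-1,n}(g)=(y_s\ptl_{x_n}+x_sy_n)\ptl_u g=0$ forces $\ptl_u g=0$ (the operator $y_s\ptl_{x_n}+x_sy_n$ is injective on polynomials, as one verifies by expanding in powers of $x_n$). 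Then $E_{s,n}(g)=(\ptl_{x_s}\ptl_{x_n}-y_n\ptl_{y_s})(g)=0$ is the last nontrivial positive root equation, since for any other positive root vector $E_{i,j}$, the constraints $i<j$ together with $g$'s dependence only on $x_s,y_s,x_n,y_n$ force $E_{i,j}(g)=0$ automatically. As recorded in the text, the PDE $(\ptl_{x_s}\ptl_{x_n}-y_n\ptl_{y_s})g=0$ has general solution parametrised by $\alpha_s,\alpha_n,\beta_s,\beta_n\in\mbb{N}$ with $\alpha_s\alpha_n=0$ (this is an instance of Lemma 3.1 with ${\cal T}_1=\ptl_{x_s}\ptl_{x_n}$ and ${\cal T}_2=-y_n\ptl_{y_s}$).

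The second step is to impose the weight conditions. For any monomial $x_s^{\alpha_s+t}x_n^{\alpha_n+t}y_s^{\beta_s-t}y_n^{\beta_n+t}$ occurring in the general sum, one has $\sum_{i\in S}\alpha_i-\sum_{i\in T}\alpha_i=(\alpha_n+t)-(\alpha_s+t)=\alpha_n-\alpha_s$ and $|\beta|=(\beta_s-t)+(\beta_n+t)=\beta_s+\beta_n$, both independent of $t$. Hence the whole sum lies in $\mathcal{B}_{\alpha_n-\alpha_s,\beta_s+\beta_n}$, so membership in $\mathcal{B}_{l_1,l_2}$ is equivalent to $\alpha_n-\alpha_s=l_1$ and $\beta_s+\beta_n=l_2$. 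Combined with $\alpha_s\alpha_n=0$, this splits into two cases: if $l_1\geq0$ take $\alpha_s=0$, $\alpha_n=l_1$, and $\beta_s\in\ol{0,l_2}$ free with $\beta_n=l_2-\beta_s$; if $l_1<0$ take $\alpha_n=0$, $\alpha_s=-l_1$, with $\beta_s\in\ol{0,l_2}$ free. Substituting into the general formula yields exactly the two displayed families.

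Finally one must record that the displayed vectors are linearly independent for distinct $\beta_s$, which is immediate since the $t=0$ summand $x_s^{\alpha_s}x_n^{\alpha_n}y_s^{\beta_s}y_n^{l_2-\beta_s}$ has a $y_s$-degree that characterises $\beta_s$ uniquely. There is no real obstacle here: the conceptual work — reducing to a function of $(x_s,y_s,x_n,y_n)$ via Lemma 5.1 and solving the residual PDE via Lemma 3.1 — is already carried out in the text preceding the lemma statement, so this proof is essentially a bookkeeping exercise converting the unconstrained parameters $(\alpha_s,\alpha_n,\beta_s,\beta_n)$ into the single free parameter $\beta_s$ compatible with the chosen weight $(l_1,l_2)$.
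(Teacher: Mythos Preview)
Your proposal is correct and follows essentially the same approach as the paper: the text preceding the lemma already reduces any singular vector to the four-variable form via Lemma 5.1, eliminates $u$ using $E_{s-1,n}$, and solves the remaining PDE $E_{s,n}(g)=0$ to obtain the general family parametrised by $\alpha_s,\alpha_n,\beta_s,\beta_n$ with $\alpha_s\alpha_n=0$; the lemma itself is then just the intersection with $\mathcal{B}_{l_1,l_2}$, exactly as you carry out. Your added remarks on the injectivity of $y_s\partial_{x_n}+x_sy_n$ and on linear independence are minor elaborations the paper leaves implicit.
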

\psp

 When
$l_1\geq0$,
\[
\begin{array}{ll}
&\Delta(\sum\limits_{t=0}^{\beta_s}\frac{l_1!\beta_s!}{t!
(l_1+t)!(\beta_s-t)!}x_s^tx_n^{l_1+t}y_s^{\beta_s-t}y_n^{l_2-\beta_s+t})\\
=&-\sum\limits_{t=0}^{\beta_s-1}\frac{l_1!\beta_s!}{t!
(l_1+t)!(\beta_s-t-1)!}x_s^{t+1}x_n^{l_1+t}y_s^{\beta_s-t-1}y_n^{l_2-\beta_s+t}\\
&+\sum\limits_{t=0}^{\beta_s}\frac{l_1!\beta_s!(l_2-\beta_s+t)}{t!
(l_1+t-1)!(\beta_s-t)!}x_s^tx_n^{l_1+t-1}y_s^{\beta_s-t}y_n^{l_2-\beta_s+t-1}\\
=&\sum\limits_{t=0}^{\beta_s}\frac{l_1!\beta_s!(l_2-\beta_s)}{t!
(l_1+t-1)!(\beta_s-t)!}x_s^tx_n^{l_1+t-1}y_s^{\beta_s-t}y_n^{l_2-\beta_s+t-1}.
\end{array}
\]
So
\[
\Delta(\sum\limits_{t=0}^{\beta_s}\frac{l_1!\beta_s!}{t!
(l_1+t)!(\beta_s-t)!}x_s^tx_n^{l_1+t}y_s^{\beta_s-t}y_n^{l_2-\beta_s+t})0\
\ \Longleftrightarrow\ \ \beta_s=l_2\ \ {\rm or}\ \
l_1=\beta_s=0.\\
\]
If $l_1<0$, then
\[
\begin{array}{ll}
&\Delta(\sum\limits_{t=0}^{\beta_s}\frac{(-l_1)!\beta_s!}{t!
(-l_1+t)!(\beta_s-t)!}x_s^{-l_1+t}x_n^ty_s^{\beta_s-t}y_n^{l_2-\beta_s+t})\\
=&-\sum\limits_{t=0}^{\beta_s-1}\frac{(-l_1)!\beta_s!}{t!
(-l_1+t)!(\beta_s-t-1)!}x_s^{-l_1+t+1}x_n^ty_s^{\beta_s-t-1}y_n^{l_2-\beta_s+t}\\
&+\sum\limits_{t=1}^{\beta_s}\frac{(-l_1)!\beta_s!(l_2-\beta_s+t)}{(t-1)!
(-l_1+t)!(\beta_s-t)!}x_s^{-l_1+t}x_n^{t-1}y_s^{\beta_s-t}y_n^{l_2-\beta_s+t-1}\\
=&\sum\limits_{t=0}^{\beta_s-1}\frac{(-l_1)!\beta_s!(l_2+l_1-\beta_s)}{t!
(-l_1+t+1)!(\beta_s-t-1)!}x_s^{-l_1+t+1}x_n^ty_s^{\beta_s-t-1}y_n^{l_2-\beta_s+t}.
\end{array}
\]
Thus
\[
\Delta(\sum\limits_{t=0}^{\beta_s}\frac{(-l_1)!\beta_s!}{t!
(-l_1+t)!(\beta_s-t)!}x_s^{-l_1+t}x_n^ty_s^{\beta_s-t}y_n^{l_2-\beta_s+t})=0\
\ \Longleftrightarrow\ \ l_1+l_2=\beta_s\ \ {\rm or}\ \ \beta_s=0.
\]

Therefore, ${\cal H}_{l_1,l_2}$ contains only one singular vector if
$l_1+l_2\leq0$ or $l_1>0$. Furthermore,
\[
E_{i,j}\mid_{\cal B}=-x_j\ptl_{x_i}-y_j\ptl_{y_i}\qquad{\rm
for}\;\;1\leq i<j<n,
\]
\[
E_{i,n}\mid_{\cal B}=\ptl_{x_i}\ptl_{x_n}-y_n\ptl_{y_i}\qquad{\rm
for}\; 1\leq i<n.
\]
So ${\cal B}$ is nilpotent with respect to  ${\cal G}_+$. Thus
${\cal H}_{l_1,l_2}$ is an irreducible submodule. Identifying $y_i$
with $x_{n+i}$ ($i\in\overline{1,n}$) and by the similar argument as
theorem 3.4, we obtain
\[
\mathcal{B}_{l_1,l_2}={\cal
H}_{l_1,l_2}+\eta\mathcal{B}_{l_1-1,l_2-1} \qquad\mbox{for
}\;l_1+l_2\leq0\;\mbox{or}\;l_1>0.
\]
\[
{\cal H}_{l_1,l_2}\bigcap\eta\mathcal{B}_{l_1-1,l_2-1}=0
\]
when $l_1+l_2\leq0$, and
\[
\begin{array}{l}
{\cal H}_{l_1,l_2}\bigcap\eta\mathcal{B}_{l_1-1,l_2-1}\subset{\cal
H}_{\la l_1+l_2\ra}\bigcap\eta{\cal A}_{\la
l_1+l_2-2\ra}=U(so(2n,\mathbb{C})).\eta^{l_1+l_2}x_{n-1}^{l_1+l_2}\\
=\eta^{l_1+l_2}{\cal
H}_{\la
-l_1-l_2\ra}=\eta^{l_1+l_2}\bigoplus\limits_{t=0}^\infty{\cal
H}_{-l_1-l_2-t,t}\subset\bigoplus\limits_{t=0}^\infty{\cal
B}_{-t,l_1+l_2+t}
\end{array}
\]
if $l_1+l_2>0$. Thus
\[
{\cal
H}_{l_1,l_2}\bigcap\eta\mathcal{B}_{l_1-1,l_2-1}=0\qquad\mbox{if}\;l_1+l_2\leq0\;
\mbox{or}\; l_1>0.
\]
By Lemma 3.1 with ${\cal T}_1=\partial_{x_n}\partial_{y_n}$, ${\cal
T}_2=-\sum\limits_{i=1}^{n-1}x_i\partial_{y_i}$ and
$${\cal T}_1^-(x^\alpha y^\beta)=\frac{x_ny_nx^\alpha
y^\beta}{(\alpha_n+1)(\beta_n+1)},$$
 we get\psp

\begin{thm}
If $l_1+l_2\leq0$ or $l_1>0$, then
$$\mathcal{B}_{l_1,l_2}={\cal H}_{l_1,l_2}\bigoplus\eta\mathcal{B}_{l_1-1,l_2-1},$$
and ${\cal H}_{l_1,l_2}$ is an irreducible highest weight module
with the highest weight $-l_1\lambda_{n-2}+(l_1+l_2-1)\lambda_{n-1}$
(resp. $l_2\lambda_{n-2}-(l_1+l_2+1)\lambda_{n-1}$) and a
corresponding highest weight vector is $x_s^{-l_1}y_n^{l_2}$ (resp.
$\sum\limits_{t=0}^{l_2}\frac{l_1!l_2!}{t!
(l_1+t)!(l_2-t)!}x_s^tx_n^{l_1+t}y_s^{l_2-t}y_n^t$) when
$l_1+l_2\leq0$ (resp. $l_1>0$). Moreover,
\begin{equation}
\begin{array}{l}
\{\sum\limits_{r_1,\cdots,r_{n-1}=0}^\infty\frac{\alpha_n!\beta_n!\prod\limits_{i=1}^{n-1}{
\beta_i\choose r_i}
(r_1+\cdots+r_{n-1})!}{(\alpha_n+r_1+\cdots+r_{n-1})!(\beta_n+r_1+\cdots+r_{n-1})!}x_n^{\alpha_n+r_1+\cdots+r_{n-1}}y_n^{\beta_n+r_1+\cdots+r_{n-1}}
\prod\limits_{i=1}^{n-1}x_i^{\alpha_i+r_i}y_i^{\beta_i-r_i}\\
|\alpha_1,\cdots,\alpha_n,\beta_1,\cdots,\beta_n\in
\mathbb{N};\alpha_n\beta_n=0;\alpha_n-\sum\limits_{i=1}^{n-1}\alpha_i=l_1,\sum\limits_{i=1}^n\beta_i=l_2\}
\end{array}
\end{equation}
forms a basis of ${\cal H}_{l_1,l_2}$.\hfill$\Box$
\end{thm}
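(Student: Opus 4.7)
The plan is to assemble Theorem 5.4 from three ingredients already set up in the text: (i) the singular-vector classification in Lemma 5.3 together with the $\Delta$-kernel computation carried out just above the theorem, which together pinpoint the unique singular vector of ${\cal H}_{l_1,l_2}$ in each regime; (ii) local nilpotency of ${\cal B}$ with respect to ${\cal G}_+$ and the $sl(n,\mathbb{C})$-analogue of Lemma 2.3 for irreducibility; and (iii) Lemma 3.1 with the operators $({\cal T}_1,{\cal T}_2,{\cal T}_1^-)$ specified just before the theorem to produce the explicit basis.

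For the direct-sum decomposition, I would identify $y_i\leftrightarrow x_{n+i}$ so that $sl(n,\mathbb{C})$ embeds in $so(2n,\mathbb{C})$ and ${\cal B}_{l_1,l_2}\subset{\cal A}_{\langle l_1+l_2\rangle}$, whence the rewriting argument of Theorem 3.4 applies verbatim: for any monomial $x^\alpha y^\beta\in\mathcal{B}_{l_1,l_2}$ one constructs a $\Delta$-kernel element whose difference from $x^\alpha y^\beta$ lies in $\eta\mathcal{B}_{l_1-1,l_2-1}$, giving $\mathcal{B}_{l_1,l_2}={\cal H}_{l_1,l_2}+\eta\mathcal{B}_{l_1-1,l_2-1}$. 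For the intersection, when $l_1+l_2\leq 0$ the vector $x_s^{-l_1}y_n^{l_2}$ visibly cannot occur in $\eta\mathcal{B}_{l_1-1,l_2-1}$ since every monomial in the image of $\eta$ carries a $y_i$-factor with $i\leq s$ or arises from $y_i\partial_{x_i}$, which is incompatible with the pure shape $x_s^{-l_1}y_n^{l_2}$; when $l_1>0$ I would invoke the chain of inclusions ${\cal H}_{l_1,l_2}\cap\eta\mathcal{B}_{l_1-1,l_2-1}\subset{\cal H}_{\langle l_1+l_2\rangle}\cap\eta{\cal A}_{\langle l_1+l_2-2\rangle}=0$ already recorded just before the theorem.

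For irreducibility, I note that every positive root vector acts on ${\cal B}$ either as $-x_j\partial_{x_i}-y_j\partial_{y_i}$ (for $i<j<n$) or as $\partial_{x_i}\partial_{x_n}-y_n\partial_{y_i}$ (for $i<n$), which are locally nilpotent and preserve a natural total degree filtration; combined with the established uniqueness of the singular vector in ${\cal H}_{l_1,l_2}$, the $sl(n,\mathbb{C})$-version of Lemma 2.3 yields irreducibility of ${\cal H}_{l_1,l_2}=U(\mathcal{G}).v$. The highest weight is then read off by applying $E_{i,i}-E_{i+1,i+1}$ to $v$: the only nonzero contributions come from $i=n-2$ and $i=n-1$ (since $v$ involves only $x_s=x_{n-1}$ and $y_n$, respectively $x_s, x_n, y_s, y_n$), and converting the eigenvalues to the fundamental-weight basis gives $-l_1\lambda_{n-2}+(l_1+l_2-1)\lambda_{n-1}$ when $l_1+l_2\leq 0$, respectively $l_2\lambda_{n-2}-(l_1+l_2+1)\lambda_{n-1}$ when $l_1>0$.

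Finally, for the explicit basis I would invoke Lemma 3.1 with $B=\mathbb{C}[x_1,\ldots,x_{n-1},y_1,\ldots,y_{n-1}]$, the filtration $V_r=\mathrm{Span}\{x_n^a y_n^b\mid a+b\leq r\}$, ${\cal T}_1=\partial_{x_n}\partial_{y_n}$, ${\cal T}_2=-\sum_{i=1}^{n-1}x_i\partial_{y_i}$, and ${\cal T}_1^-(x^\alpha y^\beta)=x_ny_nx^\alpha y^\beta/((\alpha_n+1)(\beta_n+1))$. The hypotheses of Lemma 3.1 are immediate (${\cal T}_1$ annihilates $B$, so all $h\in B$ are admissible), and iterating $(-{\cal T}_1^-{\cal T}_2)^i$ on a pure monomial $\prod_{i<n}x_i^{\alpha_i}y_i^{\beta_i}\cdot x_n^{\alpha_n}y_n^{\beta_n}$ produces exactly the displayed basis element after collecting combinatorial factors. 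The main obstacle I anticipate is the intersection argument in the regime $l_1>0$, where neither the singular-vector nor a leading-monomial argument is self-contained: one must genuinely borrow the Section 3, Case 2 result for $so(2n,\mathbb{C})$ via the $y_i\leftrightarrow x_{n+i}$ identification to conclude that the overlap ${\cal H}_{l_1,l_2}\cap\eta\mathcal{B}_{l_1-1,l_2-1}$ vanishes.
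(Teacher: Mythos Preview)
Your overall strategy coincides with the paper's: singular vectors via Lemma~5.3 and the $\Delta$-computation, irreducibility via nilpotency and the analogue of Lemma~2.3, the sum $\mathcal{B}_{l_1,l_2}={\cal H}_{l_1,l_2}+\eta\mathcal{B}_{l_1-1,l_2-1}$ by the Theorem~3.4 rewriting argument under $y_i\leftrightarrow x_{n+i}$, and the basis via Lemma~3.1. Two points, however, need correction.

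First, and most importantly, your intersection argument in the regime $l_1>0$ is wrong as stated. You write ${\cal H}_{\langle l_1+l_2\rangle}\cap\eta{\cal A}_{\langle l_1+l_2-2\rangle}=0$, but this is \emph{false} when $l_1+l_2>0$: recall from Section~3, Case~2, subcase~1 that for $k>0$ the space ${\cal H}_{\langle k\rangle}$ contains the singular vector $\eta^k x_{n-1}^k$, which manifestly lies in $\eta{\cal A}_{\langle k-2\rangle}$. What the paper actually does (in the paragraph just before the theorem) is identify this intersection as the irreducible $so(2n,\mathbb{C})$-module $U(so(2n,\mathbb{C})).\eta^{l_1+l_2}x_{n-1}^{l_1+l_2}=\eta^{l_1+l_2}{\cal H}_{\langle -l_1-l_2\rangle}$, then decompose that under the $sl(n,\mathbb{C})$-bigrading as $\bigoplus_{t\geq 0}\eta^{l_1+l_2}{\cal H}_{-l_1-l_2-t,t}\subset\bigoplus_{t\geq 0}{\cal B}_{-t,\,l_1+l_2+t}$. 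Since every summand has nonpositive first index, none of them meets ${\cal B}_{l_1,l_2}$ when $l_1>0$, and \emph{that} is why ${\cal H}_{l_1,l_2}\cap\eta\mathcal{B}_{l_1-1,l_2-1}=0$. You flagged this step as the anticipated obstacle, but your proposed resolution is not the correct one.

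Second, in your application of Lemma~3.1 you have interchanged $B$ and $V$. With ${\cal T}_1=\partial_{x_n}\partial_{y_n}$ and ${\cal T}_1^-$ given by multiplication by $x_ny_n$ (up to constants), the hypothesis ${\cal T}_1^-(B)\subset B$ forces $B=\mathbb{C}[x_n,y_n]$, not $\mathbb{C}[x_1,\ldots,x_{n-1},y_1,\ldots,y_{n-1}]$; correspondingly $V$ is the span of monomials in $x_1,\ldots,x_{n-1},y_1,\ldots,y_{n-1}$, filtered by total $y$-degree so that ${\cal T}_2=-\sum_{i<n}x_i\partial_{y_i}$ lowers the filtration. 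With your choice, ${\cal T}_1^-(B)\not\subset B$ and the lemma does not apply. This is a minor slip, but it matters for the combinatorics of the displayed basis.
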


{\bf Case 3.} $s<n-1$.\\

Let $f\in\mathcal{B}$ be a singular vector. We can write
$$f=g(x_s,y_s,u,x_{s+1},y_n,v)$$ with
$u=x_{s-1}y_s-x_sy_{s-1}$ and $v=\sum\limits_{j=s+1}^nx_jy_j$ by the
same arguments as those in Case 2 of last section.  Note
\[
E_{s-1,n}(g)=y_sy_n\partial_u\partial_v(g)+x_sy_n\partial_u(g)=0=y_n(y_s\partial_v+x_s)\partial_u(g)=0,
\]
which implies $\partial_u(g)=0$. Moreover,
\[
E_{s,n}(g)=y_n\partial_{x_s}\partial_v(g)-y_n\partial_{y_s}(g)=0,
\]
i.e.
\[
(\partial_{x_s}\partial_v-\partial_{y_s})(g)=0,
\]
and
\[
E_{s,s+1}(g)=\partial_{x_s}\partial_{x_{s+1}}(g)=0.
\]
So any singular vector is the form
\[
\begin{array}{ll}
&\sum\limits_{t=0}^\infty(\int_{x_s}\int_v)^tx_s^{\alpha_s}(\partial_{y_s})y_s^{\beta_s}y_n^{\beta_n}\\
=&\sum\limits_{t=0}^{\beta_s}\frac{\alpha_s!\beta_s!}{(\alpha_s+t)!t!(\beta_s-t)!}x_s^{\alpha_s+t}v^ty_s^{\alpha_s-t}y_n^{\beta_n}\\
=&\frac{\alpha_s!}{(\alpha_s+\beta_s)!}(y_s\partial_{x_s}+v)^{\beta_s}x_s^{\alpha_s+\beta_s}y_n^{\beta_n}\\
=&\frac{\alpha_s!}{(\alpha_s+\beta_s)!}\eta^{\beta_s}x_s^{\alpha_s+\beta_s}y_n^{\beta_n},
\end{array}
\]
or
\[
\begin{array}{ll}
&\sum\limits_{t=0}^\infty(\int_{x_s}\int_v)^tv^{\alpha_v}(\partial_{y_s})y_s^{\beta_s}y_n^{\beta_n}\\
=&\sum\limits_{t=0}^{\beta_s}\frac{\alpha_v!\beta_s!}{(\alpha_v+t)!t!(\beta_s-t)!}x_s^tv^{\alpha_v+t}y_s^{\alpha_s-t}y_n^{\beta_n}\\
=&\frac{\alpha_v!}{(\alpha_v+\beta_s)!}(y_s\partial_{x_s}+v)^{\alpha_v+\beta_s}x_s^{\beta_s}y_n^{\beta_n}\\
=&\frac{\alpha_v!}{(\alpha_v+\beta_s)!}\eta^{\alpha_v+\beta_s}x_s^{\beta_s}y_n^{\beta_n},
\end{array}
\]
or
\[x_{s+1}^{\alpha_{s+1}}v^{\alpha_v}y_n^{\beta_n}=\eta^{\alpha_v}x_{s+1}^{\alpha_{s+1}}y_n^{\beta_n}.\]
 Using Lemma 3.1
with ${\cal T}_1=\partial_{x_n}\partial_{y_n}$, ${\cal
T}_2=\Delta-{\cal T}_1$ and
\[
{\cal T}_1^-(x^\alpha y^\beta)=\frac{x_ny_nx^\alpha
y^\beta}{(\alpha_n+1)(\beta_n+1)},
\]
 and using
Theorem 3.5 with  $x_{n+i}$ replaced by $y_i$
($i\in\overline{1,n}$), we get \psp

\begin{thm}
If $l_1+l_2\leq-(n-s-1)$ or $l_1>-(n-s-1)$, then
$$\mathcal{B}_{l_1,l_2}={\cal H}_{l_1,l_2}\bigoplus\eta\mathcal{B}_{l_1-1,l_2-1},$$
and ${\cal H}_{l_1,l_2}$ is an irreducible highest weight module
with a basis
\begin{equation}
\begin{array}{l}
\{
\sum\limits_{r_1,\cdots,r_{n-1}=0}^\infty\frac{(-1)^{r_{s+1}+\cdots+r_n}
\alpha_n!\beta_n!\prod\limits_{j=s+1}^{n-1}{\alpha_j\choose r_j}
\prod\limits_{i=1}^{n-1}{\beta_i\choose r_i}
(r_1+\cdots+r_{n-1})!\prod\limits_{j=s+1}^{n-1}r_j!}
{(\alpha_n+r_1+\cdots+r_{n-1})!(\beta_n+r_1+\cdots+r_{n-1})!}\\
\prod\limits_{i=1}^sx_i^{\alpha_i+r_i}\prod\limits_{j=s+1}^{n-1}x_j^{\alpha_j-r_j}
\prod\limits_{i=1}^{n-1}y_i^{\beta_i-r_i}
x_n^{\alpha_n+r_1+\cdots+r_{n-1}}y_n^{\beta_n+r_1+\cdots+r_{n-1}}|\\
\alpha_1,\cdots,\alpha_n,\beta_1,\cdots,\beta_n\in \mathbb{N},
\alpha_n\beta_n=0,\sum\limits_{j=s+1}^n\alpha_j-\sum\limits_{i=1}^s\alpha_i=l_1,\sum\limits_{i=1}^n\beta_n=l_2\}.
\end{array}
\end{equation}
The highest weight of ${\cal H}_{l_1,l_2}$ is
$-l_1\lambda_{s-1}+(l_1-1)\lambda_s+l_2\lambda_{n-1}$ (resp.$
-(l_1+1)\lambda_s+l_1\lambda_{s+1}+l_2\lambda_{n-1}$) and a
corresponding highest weight vector is $x_s^{-l_1}y_n^{l_2}$ (resp.
$x_{s+1}^{l_1}y_n^{l_2}$) when $l_1+l_2\leq(n-s-1)$ or
$-(n-s-1)<l_1\leq0$ (resp. $l_1>0$).
\end{thm}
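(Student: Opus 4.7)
The plan is to follow the template of Theorems 3.4 and 3.5, after the identification $y_i\leftrightarrow x_{n+i}$, which realizes the $sl(n,\mathbb{C})$-action on ${\cal B}$ as the restriction of the noncanonical $so(2n,\mathbb{C})$-representation of Case 2 in Section 3. Under this identification the operators $\Delta$ and $\eta$ defined here match the corresponding operators from Section 3, so the arguments of Theorem 3.5 can be transposed almost verbatim. I would first extract which of the three families of singular vectors $\eta^{\beta_s}x_s^{\alpha_s+\beta_s}y_n^{\beta_n}$, $\eta^{\alpha_v+\beta_s}x_s^{\beta_s}y_n^{\beta_n}$, $\eta^{\alpha_v}x_{s+1}^{\alpha_{s+1}}y_n^{\beta_n}$ classified just above actually lie in ${\cal H}_{l_1,l_2}$. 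A direct computation of $\Delta(\eta^l\cdot f)$ by induction on $l$, using the commutator $[\Delta,\eta]$, shows that the result vanishes only when $l=0$ or $l$ equals a specific boundary value depending on $n-s-1$. Under the hypothesis $l_1+l_2\leq -(n-s-1)$ or $l_1>-(n-s-1)$, only the $l=0$ branch is compatible with the weight constraint, so ${\cal H}_{l_1,l_2}$ contains a unique singular vector, namely $x_s^{-l_1}y_n^{l_2}$ when $l_1\leq 0$ and $x_{s+1}^{l_1}y_n^{l_2}$ when $l_1>0$, yielding the two highest weights listed. Since each $E_{i,j}|_{\cal B}$ with $i<j$ is locally nilpotent on ${\cal B}$, the analog of Lemma 2.3 gives the irreducibility of ${\cal H}_{l_1,l_2}$.

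Next I would establish the decomposition ${\cal B}_{l_1,l_2}={\cal H}_{l_1,l_2}+\eta{\cal B}_{l_1-1,l_2-1}$ by the mechanism of (3.26)--(3.31). Using the ${\cal G}_-$-action it suffices to handle monomials of the reduced form $x_s^{k_s}y_s^{k_{n+s}}x_{s+1}^{k_{s+1}}y_n^{k_n}v^{k_u}$ with $v=\sum_{j=s+1}^n x_jy_j$. For each such monomial I would exhibit a $\Delta$-closed combination indexed by a parameter $t$ together with a family of identities rewriting the monomial as $\eta(\cdot)$ plus a boundary term; the correct alternating sum of these identities extracts the monomial modulo $\eta{\cal B}_{l_1-1,l_2-1}$, with a normalizing coefficient that is nonzero precisely under the boundary hypothesis. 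The intersection ${\cal H}_{l_1,l_2}\cap\eta{\cal B}_{l_1-1,l_2-1}$ must vanish, because any nonzero element would generate ${\cal H}_{l_1,l_2}$ by irreducibility and so contain its unique singular vector, yet neither $x_s^{-l_1}y_n^{l_2}$ nor $x_{s+1}^{l_1}y_n^{l_2}$ lies in $\eta{\cal B}$ on degree grounds.

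Finally I would produce the basis (5.5) by invoking Lemma 3.1 with ${\cal T}_1=\partial_{x_n}\partial_{y_n}$, ${\cal T}_2=\Delta-{\cal T}_1$, $B=\mathbb{C}[x_n,y_n]$, the filtration given by total degree in $y_1,\dots,y_{n-1}$, and the right inverse
\[
{\cal T}_1^-(x^\alpha y^\beta)=\frac{x_ny_nx^\alpha y^\beta}{(\alpha_n+1)(\beta_n+1)}.
\]
The kernel of ${\cal T}_1$ in $B$ is spanned by $x_n^{\alpha_n}y_n^{\beta_n}$ with $\alpha_n\beta_n=0$; iterating $-{\cal T}_1^-{\cal T}_2$ on the products of these base polynomials with monomials in the remaining variables and expanding the resulting multinomials yields (5.5) in the same way that Theorem 3.5 yields (3.25). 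The main obstacle will be the combinatorial verification inside the decomposition step: one must check that the normalizing alternating-sum coefficient is nonzero exactly in the stated range, and that in the complementary range an extra singular vector (analogous to $\eta^{k+(n-s-1)}x_s^{k+2(n-s-1)}$ appearing in Case 2 of Section 3) emerges and breaks the direct-sum decomposition.
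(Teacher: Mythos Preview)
Your proposal is correct and follows essentially the same route as the paper: the paper's proof of this theorem consists of exactly the two sentences ``Using Lemma 3.1 with ${\cal T}_1=\partial_{x_n}\partial_{y_n}$, ${\cal T}_2=\Delta-{\cal T}_1$ and ${\cal T}_1^-(x^\alpha y^\beta)=\frac{x_ny_nx^\alpha y^\beta}{(\alpha_n+1)(\beta_n+1)}$, and using Theorem 3.5 with $x_{n+i}$ replaced by $y_i$ ($i\in\overline{1,n}$), we get \ldots'', so you have in fact spelled out more of the argument than the paper does. One small remark: your ``degree grounds'' justification for $x_{s+1}^{l_1}y_n^{l_2}\notin\eta{\cal B}$ is a bit casual; in the parallel situation (Theorem 5.4, case $l_1>0$) the paper instead embeds ${\cal H}_{l_1,l_2}\cap\eta{\cal B}_{l_1-1,l_2-1}$ into ${\cal H}_{\langle l_1+l_2\rangle}\cap\eta{\cal A}_{\langle l_1+l_2-2\rangle}$ for the ambient $so(2n,\mathbb{C})$-module and identifies the latter with $\eta^{l_1+l_2}{\cal H}_{\langle -l_1-l_2\rangle}$, which lands in bidegrees with nonpositive first index and hence misses ${\cal B}_{l_1,l_2}$ when $l_1>0$---you may want to use that argument rather than an informal degree count.
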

\hfill$\Box$
\vspace{1cm}

\begin{center}{\Large \bf Acknowledgement}\end{center}

I would like to thank Professor Xiaoping Xu for his advice and
suggesting this research topic.

\vspace{1cm}

\noindent{\Large \bf References}

\hspace{0.5cm}

\begin{description}
\item[{[1]}] L.Frappat, A.Sciarrino and P.sorba, Dictionary on Lie
Algebras and Superalgebras, Academic Press,2000

\item[{[2]}] I. M,  Gelfand and M. L. Tsetlin,  Finite-dimensional
representations of the group of unimodular matrices. Dokl. Akad.
Nauk SSSR {\bf 71}, 825-828(1950)(Russian). English transl. in:
Gelfand, I. M. {\it Collected papers. Vol II}, Berlin:
Springer-Verlag, 1988

\item[{[3]}] I. M,  Gelfand and M. L. Tsetlin, Finite-dimensional
representations of the groups of orthogonal matrices. Dokl. Akad.
Nauk SSSR {\bf 71}, 1017-1020(1950)(Russian). English transl. in:
Gelfand, I. M. {\it Collected papers. Vol II}, Berlin:
Springer-Verlag, 1988

\item[{[4]}] J. E. Humphreys, {\it Introduction to Lie Algebras and Representation Theory},
 Springer-Verlag New York Inc., 1972.

\item[{[5]}] V. G. Kac, {\it Infinite Dimensional Lie Algebras},
Third edition, Cambridge University Press,1990.

\item[{[6]}]  A. I. Molev, A basis for representations of symlectic
Lie algebras. Commun. Math. Phys. 201,591-618(1999)

\item[{[7]}] X. Xu, {\it Lie Algebras and Their Representations},
Lectures Notes in Academy of Mathematics and Systems, Chinese
Academy of Sciences, 2004.

\item[{[8]}] X.Xu, Flag partial differential equations and
representations of Lie algebras, {\it Acta Applicanda Mathematicae},
in press.

\end{description}

\end{document}